\newtheorem{theorem}{Theorem}
\newtheorem{lemma}[theorem]{Lemma}
\newtheorem{definition}[theorem]{Definition}
\numberwithin{equation}{section}
\renewcommand{\boldsymbol}{\bm}
\newcommand{\bsgamma}{{\boldsymbol{\gamma}}}
\newcommand{\bsnu}{{\boldsymbol{\nu}}}
\newcommand{\bsb}{{\boldsymbol{b}}}
\newcommand{\bsm}{{\boldsymbol{m}}}
\newcommand{\bst}{{\boldsymbol{t}}}
\newcommand{\bsx}{{\boldsymbol{x}}}
\newcommand{\bsy}{{\boldsymbol{y}}}
\newcommand{\setu}{\mathrm{\mathfrak{u}}}
\newcommand{\supp}{\mathrm{supp}}
\newtheorem*{assumption}{Assumptions}
\newcounter{assumpenum}
\newcounter{exampleenum}
\begin{document}

\DOI{DOI HERE}
\copyrightyear{2026}
\vol{00}
\pubyear{2026}
\access{Advance Access Publication Date: Day Month Year}
\appnotes{Paper}
\copyrightstatement{Published by Oxford University Press on behalf of the Institute of Mathematics and its Applications. All rights reserved.}
\firstpage{1}


\title[Uncertainty quantification for Gevrey regular random domain deformations]{Uncertainty quantification for stationary and time-dependent PDEs\\subject to Gevrey regular random domain deformations}

\author{Ana Djurdjevac
\address{\orgdiv{Mathematical Institute}, \orgname{University of Oxford}, \orgaddress{\street{Woodstock Road}, \state{Oxford}, \postcode{OX2 6GG}, \country{UK}}}}
\author{Vesa Kaarnioja
\address{\orgdiv{School of Engineering Sciences}, \orgname{LUT University}, \orgaddress{\street{P.O.~Box 20}, \postcode{53851} \state{Lappeenranta}, \country{Finland}}}}
\author{Claudia Schillings and Andr\'e-Alexander Zepernick*
\address{\orgdiv{Department of Mathematics and Computer Science}, \orgname{Free University of Berlin}, \orgaddress{\street{Arnimallee~6}, \postcode{14195} \state{Berlin}, \country{Germany}}}}

\authormark{A. Djurdjevac et al.}

\corresp[*]{Corresponding author: \href{email:a.zepernick@fu-berlin.de}{a.zepernick@fu-berlin.de}}

\received{Date}{0}{Year}
\revised{Date}{0}{Year}
\accepted{Date}{0}{Year}


\abstract{We study uncertainty quantification for partial differential equations subject to domain uncertainty. We parameterize the random domain using the model recently considered by Chernov and L\^{e} (2024) as well as Harbrecht, Schmidlin, and Schwab (2024)  in which the input random field is assumed to belong to a Gevrey smoothness class. This approach has the advantage of being substantially more general than models which assume a particular parametric representation of the input random field such as a Karhunen--Lo\`eve series expansion. We consider both the Poisson equation as well as the heat equation and design randomly shifted lattice quasi-Monte Carlo (QMC) cubature rules for the computation of the expected solution  under domain uncertainty. We show that these QMC rules exhibit dimension-independent, essentially linear cubature convergence rates in this framework. In addition, we complete the error analysis by taking into account the approximation errors incurred by dimension truncation of the random input field and finite element discretization. Numerical experiments are presented to confirm the theoretical rates.}
\keywords{Quasi-Monte Carlo method; Random domain; Uncertainty quantification; Partial differential equation; Gevrey regularity.}


\maketitle

\section{Introduction}

Domain uncertainties arise frequently in numerous applications  due to factors such as restricted resolution and noise in measurements or manufacturing tolerances. In particular, various biological processes are naturally posed on uncertain domains. Inaccuracies in surface imaging such as microscopy, tomography, and in product manufacturing lead to the problem of describing a variety of different phenomena on randomly deformed domains, see, e.g.,~\cite{ShapeUncertaintyArrhythmias} and~\cite{FormingInudcedDamage}. In recent years, there has been increasing attention on the topic \citep{castrillon2016analytic,ChDjEl20,djurdjevac2021linear,dolz22,HPS16,hiptmair2018large,xiu2006numerical}, particularly from a numerical perspective.%

The natural first question that arises is how to represent an uncertain domain and the quantities of interest to be considered.
A common way of representing the random domain is to assume the existence of a random field $\boldsymbol V(\omega)$ subject to a probability space $(\Omega,\mathcal A,\mathbb P)$ that maps a given deterministic domain $D_{\rm ref} \subset\mathbb{R}^d$, $d\in\{1,2,3\}$, to a random domain $D(\omega)$ for $\omega\in\Omega$. %
This is the basis of the so-called \emph{domain mapping method} \citep{HPS16,xiu2006numerical} which we will also adopt in the present study: if $u$ denotes the solution to a partial differential equation (PDE) on a random domain $D(\omega)$, then the pullback solution is defined as the composition $\widehat u=u\circ \boldsymbol V(\omega)$. If the domain mapping $\boldsymbol V(\omega)$ is sufficiently smooth with respect to the spatial variable (e.g., it is a $\mathcal C^2$-diffeomorphism), then the pullback solution $\widehat u$ can be expressed as the solution to a transformed PDE problem posed on the reference domain, equipped with a random coefficient and a random source term taking into account the effect of the random deformation. This reformulation allows
us to deal with potentially large deformations and to apply numerous available numerical methods for solving equations with random
coefficients on a fixed domain.

A common alternative to the domain mapping method is the so-called perturbation method~\citep{harbrecht2008sparse}, where one starts with a perturbation field defined
on the boundary of a reference domain and a shape Taylor expansion with respect
to this field is used to express the solution of the considered equation. The main disadvantage
of this method is that it is applicable only for small perturbations. In contrast to the domain mapping method, one only needs to define the random transformation on the boundary $\partial D_{\rm ref}$ instead of the closure of the entire domain $\overline{D_{\rm ref}}$. However, the perturbation approach requires both the reference domain and random perturbations to be $\mathcal C^2$-smooth, while the domain mapping method allows the reference domain $D_{\rm ref}$ to be Lipschitz with only the random perturbations $\boldsymbol V(\omega)$ required to be $\mathcal C^2$-smooth.

 For the computation of the expected solution, we will use lattice quasi-Monte Carlo (QMC) cubature rules. QMC methods are a popular method for assessing the response statistics of PDEs with random inputs, exhibiting dimension-independent, faster-than-Monte Carlo cubature convergence rates under moderate assumptions placed on the input random field~\citep{log,log2,log4,kuonuyenssurvey,KuoNuyens2018,kss12}. 

In a recent pair of articles~\cite{Chernov23a,Chernov23b} demonstrated that the input random field can be generalized beyond those considered in the previously existing literature while still retaining dimension-independent QMC convergence rates for the PDE response. They showed that it is enough to assume that the input random field satisfies a certain parametric regularity bound which ensures that the PDE response belong to the so-called Gevrey smoothness class. For details on Gevrey class functions and their properties we refer to \cite{RodinoGevrey}. Moreover, \cite{schmidlin24} showed that Gevrey regularity of parametric inputs is inherited by the solutions to a very general class of operator equations. We remark that some of the parametric regularity analysis carried out in~\cite{schmidlin24} may be applied to the stationary PDE problem considered in our work. However, since the focus in~\cite{schmidlin24} is not the application of numerical methods, certain constants appearing in their parametric regularity estimates are not tracked explicitly. In contrast, since our goal is the design of constructible QMC cubatures for the computation of response statistics, it is necessary to make the dependence of the parametric regularity bounds on the model parameters explicit in our analysis. In addition, we consider the parabolic case which is not treated in~\cite{schmidlin24}. In general, domain uncertainty quantification has focused on stationary problems in the existing literature. Nevertheless, there has been recent interest on quantifying domain uncertainty for parabolic PDEs~\citep{castrillon21,djurdjevac2021linear}.

Under the assumption of Gevrey regularity imposed on the random domain mapping, we can prove that the solutions to both, the Poisson equation and the heat equation on a random domain, belong to a Gevrey class. After truncating the infinite-dimensional input vector field $\boldsymbol V$ to $s$ dimensions, we can develop tailored QMC point sets achieving essentially linear cubature convergence rates independently of the stochastic dimension $s$ of the problem. The cubature convergence rates will be assessed in a series of numerical experiments. In addition, we shall also briefly address the other sources of discretization error stemming from dimension truncation of the input random field as well as finite element approximation of the PDE solution. Together they yield an overall error estimate with explicit decay rates with respect to the truncation dimension, spatial and temporal refinement levels, and the QMC convergence rate.

\paragraph{Contributions.} This paper presents the following key contributions:
\begin{itemize}
\item The Gevrey class accommodates very general parameterizations of domain uncertainty. Moreover, in addition to the Poisson equation, we also study domain uncertainty quantification for a \emph{parabolic} PDE.
\item We develop a novel parametric regularity analysis for these problems, which is used to design tailored \emph{QMC cubature rules} with essentially linear convergence rates \emph{independently of the truncation dimension}.
\item We consider the approximation errors incurred by dimension truncation of the random domain mapping and finite element discretization.
\item Numerical experiments are presented which confirm the sharpness of our results.
\end{itemize}

\paragraph{Outline of the paper.} This paper is organized as follows. The elliptic and parabolic PDE problems as well as the Gevrey regular parameterization of domain uncertainty are introduced in Section~\ref{sec:paramreg} alongside the definitions and main assumptions required for the subsequent analysis. In Section~\ref{sec:qmc}, we briefly review the basic properties of randomly shifted rank-1 lattice rules. The parametric regularity analysis for the considered model problems is carried out in Section~\ref{sec:4}. The corresponding error rates for dimension truncation, finite element discretization, and QMC integration are stated in Section~\ref{sec:error}. Numerical experiments are presented to assess the QMC convergence rate for both the stationary and time-dependent cases in Section~\ref{sec:numex}. Some conclusions on our findings are drawn in Section~\ref{sec:conclusions}. The appendix covers results on technical recurrence relations and PDE regularity used in our analysis.

\section{Notations, preliminaries, and assumptions}\label{sec:paramreg}

Let $U:=[-\frac12,\frac12]^{\mathbb N}$ be the set of parameters. We define the set of finitely-supported multi-indices by
\begin{align*}
  \mathscr F:=\bigg\{\bsnu\in\mathbb N_0^{\mathbb N}\mid |\bsnu|:=\sum_{j\geq 1}\nu_j<\infty\bigg\}.  
\end{align*}
 Let $\boldsymbol m,\bsnu\in\mathscr F$ be multi-indices and let $\boldsymbol b=(b_j)_{j\geq 1}$ be a sequence of real numbers. The support of a multi-index $\bsnu$ is defined as
 \begin{align*}
  {\rm supp}(\bsnu):=\{j\in\mathbb N\mid \nu_j\neq 0\}.   
 \end{align*}
 In what follows, we shall use the following multi-index notations:
$$
\boldsymbol m\leq\bsnu\quad\Leftrightarrow\quad m_j\leq \nu_j\quad\text{for all}~j\in\mathbb N,$$
$$\binom{\bsnu}{\boldsymbol m}=\prod_{j\geq 1}\binom{\nu_j}{m_j},\quad \boldsymbol b^{\bsnu}=\prod_{j\in\supp{(\bsnu)}}b_j^{\nu_j},\quad \partial_{\bsy}^{\bsnu}=\prod_{j\in\supp(\bsnu)}\frac{\partial^{\nu_j}}{\partial y_j^{\nu_j}},
$$
where $\bsy\in U$ and we use the convention $0^0:=1$.

Let $D\subset\mathbb R^d$ be a non-empty Lipschitz domain for $d\in\{1,2,3\}$. We denote the subspace of $H^1(D)$ with vanishing trace on $\partial D$ as $H_0^1(D)$. The dual space of $H_0^1(D)$ is denoted by $H^{-1}(D)$ with respect to the pivot space $L^2(D)$.

\begin{definition}\rm 
Let $D\subset \mathbb R^{d}$ be a non-empty Lipschitz domain for $d\in\{1,2,3\}$. We define
$$
\|v\|_{L^\infty(D)}:=\begin{cases}
\underset{\bsx\in D}{\rm ess\,sup}\,\|v(\bsx)\|_{\ell^2}&\text{if}~v\!:D\to \mathbb R^d,\\
\underset{\bsx\in D}{\rm ess\,sup}\,\|v(\bsx)\|_{2}&\text{if}~v\!:D\to\mathbb R^{d\times d},
\end{cases}
$$
where $\|\cdot\|_{\ell^2}$ denotes the Euclidean norm and $\|\cdot\|_2$ denotes the matrix spectral norm.
In addition, if $v\!:D\to \mathbb R^d$, we define
$$
\|v\|_{W^{1,\infty}(D)}:=\max\big\{\underset{\bsx\in D}{\rm ess\,sup}\,\|v(\bsx)\|_{\ell^2},\underset{\bsx\in D}{\rm ess\,sup}\,\|v'(\bsx)\|_2\big\},
$$
where $v'\!:D\to\mathbb R^{d\times d}$ denotes the Jacobian matrix of $v$. Furthermore, we will make use of the Sobolev norms defined as
\begin{align*}
&\|v\|_{H_0^1(D)}:=\|\nabla v\|_{L^2(D)}=\left(\int_D\|\nabla v(\bsx)\|_{\ell^2}^2\,\text{d} \bsx\right)^\frac{1}{2}, \qquad v \in H_0^1(D),\\
&\|v\|_{H_0^1(D)\cap H^2(D)}:=\left(\|v\|^2_{L^2(D)}+\|\Delta v\|^2_{L^2(D)}\right)^\frac{1}{2}, \quad v \in H_0^1(D)\cap H^2(D).
\end{align*}
Furthermore, when $v\in\mathcal C^k(\overline{D})$ for $k\in\mathbb N_0$, we define
$$
\|v\|_{\mathcal C^k(\overline{D})}:=\max_{|\bsnu|\leq k} \sup_{\bsx\in \overline{D}}|\partial_{\bsx}^{\bsnu}v(\bsx)|.
$$

We omit the domain $D$ from the subscript when there is no risk of confusion.
\end{definition}

 Furthermore, if $V$ is a Sobolev space and $I\subset\mathbb R$ is an interval, we denote by $L^2(I;V)$ the space of all measurable functions $v\!:I\to V$ with finite norm
$$
\|v\|_{L^2(I;V)}:=\bigg(\int_I \|v(\cdot,t)\|_{V}^2\,{\rm d}t\bigg)^{1/2}.
$$

We assume that the reference domain $D_{\rm ref}\subset \mathbb R^d$, $d\in\{1,2,3\}$, is bounded and non-empty with Lipschitz regular boundary, and that the mapping
\begin{align*}
\boldsymbol V\!:\overline{D_{\rm ref}}\times U\to \mathbb R^d
\end{align*}
is a vector field. We define the family of admissible domains $\{D(\bsy)\}_{\bsy\in U}$, parameterized by
$$
D(\bsy):=\boldsymbol V(D_{\rm ref},\bsy),\quad \bsy\in U,
$$
and the \emph{hold-all domain} is defined by setting
\begin{equation}\label{hold_all_domain}
\mathcal D:=\bigcup_{\bsy\in U}D(\bsy).
\end{equation}
Furthermore, we denote by $J(\cdot,\bsy)\!:D_{\rm ref}\to\mathbb R^{d\times d}$ the Jacobian matrix of vector field $\boldsymbol V$ with respect to $\bsx\in D_{\rm ref}$. In addition, we define the dimensionally truncated domain mapping $\boldsymbol 
V_s\!:\overline{D_{\rm ref}}\times U\to\mathbb R^d$ by setting
$$
\boldsymbol V_s(\bsx,\bsy):=\boldsymbol V(\bsx,(y_1,\ldots,y_s,0,0,\ldots)),\quad \bsx\in \overline{D_{\rm ref}}\times U.
$$

Instead of following the approaches taken by~\cite{HPS16}~or~\cite{HHKKS24}, we carry out the analysis using the arguments of~\cite{Chernov23a,Chernov23b}. As noted by \citet[Remark 1]{HarbrechtSchmidlinMultilevel}, instead of assuming a particular kind of \emph{parametric representation} for $\boldsymbol V(\bsx,\bsy)$, such as a Karhunen--Lo\`eve expansion, we can instead only impose a condition on the growth rate of the parametric derivatives of $\boldsymbol V(\bsx,\bsy)$.  We assume that the random domain mapping $\boldsymbol V$ is parameterized by a countable sequence $\bsy=(y_j)_{j\geq 1}$ of independent and identically distributed (i.i.d.) uniform random variables $y_j=y_j(\omega)$, $j\geq 1$, supported on $[-\frac12,\frac12]$. The main modeling assumptions regarding the random domain mapping and other problem parameters are stated below. Crucially for our analysis, the parametric regularity bound for the input random field is quantified in Assumption~\ref{assump:gevrey}.%

\begin{assumption} $ $
\begin{list}{{\rm (A\arabic{assumpenum})}}{\usecounter{assumpenum}%
  \setlength{\leftmargin}{1.7\parindent}%
  \setlength{\labelwidth}{1.7\parindent}%
  \setlength{\labelsep}{0.5em}%
  \renewcommand{\makelabel}[1]{\hfill #1}}
\item We assume that for each $\bsy\in U$, $\boldsymbol V(\cdot,\bsy)\!:\overline{D_{\rm ref}}\to\mathbb R^d$ is an invertible and twice continuously differentiable vector field, with\label{a1}
$$
\sup_{\bsy\in U}\|\boldsymbol V(\cdot,\bsy)\|_{\mathcal C^2(\overline{D_{\rm ref}})}<\infty\quad\text{and}\quad \sup_{\bsy\in U}\|\boldsymbol V^{-1}(\cdot,\bsy)\|_{\mathcal C^2(\overline{D(\bsy)})}<\infty.
$$
\item We assume that there exist uniform bounds $0<\sigma_{\min}\leq 1\leq \sigma_{\max}<\infty$ such that  $\text{for all}~\bsx\in D_{\rm ref},~\bsy\in U$\label{a2sv}
$$
\sigma_{\min}\leq \min\sigma(J(\bsx,\bsy))\leq \max\sigma(J(\bsx,\bsy))\leq\sigma_{\max},
$$
holds, where $\sigma(M)$ denotes the set of all singular values of matrix $M$.
\item {\bf\emph{Gevrey regularity of the domain mapping:}} We assume that the vector field $\boldsymbol V(\cdot,\bsy)$ is infinitely many times continuously differentiable with respect to $\bsy\in U$  and there exists a constant $C\geq 1$, an exponent $\beta\geq 1$, and a sequence $\boldsymbol b=(b_j)_{j\geq 1}$ of non-negative numbers such that\label{assump:gevrey}
$$
\|\partial_\bsy^{\bsnu}\boldsymbol V(\cdot,\bsy)\|_{W^{1,\infty}(D_{\rm ref})}\leq C(|\bsnu|!)^\beta\boldsymbol b^{\bsnu}\quad\text{for all}~\bsnu\in\mathscr F,~\bsy\in U.
$$
It is an immediate consequence of the above condition that
$$
\|\partial_{\bsy}^{\bsnu}J(\cdot,\bsy)\|_{L^\infty(D_{\rm ref})}\leq C(|\bsnu|!)^\beta\boldsymbol b^{\bsnu}\quad\text{for all}~\bsnu\in\mathscr F,~\bsy\in U.
$$
Furthermore, it follows by continuity that
$$
\lim_{s\to\infty}\boldsymbol V(\cdot,\bsy_{\leq s})=\boldsymbol V(\cdot,\lim_{s\to\infty}\bsy_{\leq s})=\boldsymbol V(\cdot,\bsy)
$$
and hence, for all $\bsy \in U$ there holds
$$
\lim_{s\to\infty}\|\boldsymbol V(\cdot,\bsy)-\boldsymbol V_s(\cdot,\bsy)\|_{W^{1,\infty}(D_{\rm ref})}=0.
$$
\item {\bf\emph{Gevrey regularity of the source term:}} The source term $f\!:\mathcal D\to\mathbb R$ is infinitely many times continuously differentiable and satisfies\label{a4}
$$
\|\partial_{\bsx}^{\bsnu}f\|_{L^\infty(\mathcal D)}\leq C_f (|\bsnu|!)^\beta\boldsymbol \rho^{\bsnu}\quad\text{for all}~\bsnu\in\mathscr F,
$$
where $C_f>0$ is a constant depending on $f$ and $\boldsymbol\rho=(\rho_j)_{j=1}^d$ is a sequence of non-negative real numbers. Without loss of generality, we assume that the Gevrey exponent $\beta$ coincides with that of Assumption~\ref{assump:gevrey}.
\item The sequence $\boldsymbol b$ in  Assumption~\ref{assump:gevrey} satisfies $\boldsymbol b\in \ell^p(\mathbb N)$ for some $p\in(0,1)$.\label{a6} 
\item  The sequence $\boldsymbol b$ in  Assumption~\ref{assump:gevrey} satisfies 
$$
b_1\geq b_2\geq b_3\geq \cdots.
$$
\item The reference domain $D_{\rm ref}\subset\mathbb R^d$ is a convex polyhedron.\label{a9}
\end{list}
\end{assumption}

We remark that assumption \ref{a9} can be relaxed if one allows for curvilinear triangulations or non domain-conformity. However, for simplicity we only use finite elements on domain conforming non-curvilinear triangulations.

In this article we shall consider two model problems subject to domain uncertainty.
\paragraph{Poisson equation.} The  Poisson equation over a parametric domain $D(\bsy)$ is the problem of finding, for all $\bsy\in U$, $u(\cdot,\bsy)\!:D(\bsy)\to\mathbb R$ such that
$$
\begin{cases}
-\Delta u(\bsx,\bsy)=f(\bsx)&\text{for}~\bsx\in D(\bsy),\\
u(\bsx,\bsy)=0&\text{for}~\bsx\in \partial D(\bsy),
\end{cases}%
$$
where $f\!:\mathcal D\to\mathbb R$ is a given source term over the hold-all domain $\mathcal D$, see \eqref{hold_all_domain}. The weak formulation over $D(\bsy)$ can be stated as: for each $\bsy\in U$, find $u(\cdot,\bsy)\in H_0^1(D(\bsy))$ such that
$$
\int_{D(\bsy)}\nabla  u(\bsx,\bsy)\cdot \nabla  v(\bsx)\,{\rm d}\bsx=\int_{D(\bsy)}f(\bsx)v(\bsx)\,{\rm d}\bsx\quad\text{for all}~v\in H_0^1(D(\bsy)).
$$
We consider the path-wise formulation since this allows for straightforward analysis of the higher-order partial derivatives of the PDE solution with respect to the parameter $\bsy$.
However, for the purposes of analysis, it is convenient to analyze instead the pullback solution $\widehat u(\bsx,\bsy)=u(\boldsymbol V(\bsx,\bsy),\bsy)$. This is equivalent to the following pullback weak formulation obtained using a change of variables: for all $\bsy\in U$, find  $\widehat u(\cdot,\bsy)\in H_0^1(D_{\rm ref})$ such that
\begin{align}\label{eq:weakform}
\int_{D_{\rm ref}}(A(\bsx,\bsy)\nabla \widehat u(\bsx,\bsy))\cdot \nabla \widehat v(\bsx)\,{\rm d}\bsx=\int_{D_{\rm ref}}f_{\rm ref}(\bsx,\bsy)\widehat v(\bsx)\,{\rm d}\bsx
\end{align}
for all$~\widehat v\in H_0^1(D_{\rm ref})$,  where 
\begin{align*}
  f_{\rm ref}(\bsx,\bsy)&=f(\boldsymbol V(\bsx,\bsy))\det J(\bsx,\bsy) \quad\mathrm{and}\\
  A(\bsx,\bsy)&=(J(\bsx,\bsy)^{\rm T}J(\bsx,\bsy))^{-1}\det J(\bsx,\bsy).
\end{align*}
Note that the test functions $\widehat v$ are independent of the parameter $\bsy\in U$ due to the fact that the spaces $H_0^1(D_{\rm{ref}})$ and $H_0^1(D(\bsy))$ are isomorphic as shown by \citet[Lemma 1]{HPS16}.
Since the parameters $\bsy\in U$ are uniformly bounded, it is a consequence of the Lax--Milgram lemma that $\widehat u$ can be bounded independently of $\bsy$. In particular, $\widehat u\in L^p(U;H_0^1(D_{\rm ref}))$ for all $1\leq p\leq \infty$.

\paragraph{Heat equation.} The  heat equation over a parametric domain $D(\bsy)$ and the time interval $I=[0,T]$ for some fixed terminal time $T>0$  is the problem of finding, for all $\bsy\in U$, $u(\cdot,\cdot,\bsy)\!:D(\bsy)\times I\to\mathbb R$ such that
$$
\begin{cases}
\frac{\partial}{\partial t}u(\bsx,t,\bsy)-\Delta u(\bsx,t,\bsy)=f(\bsx)&\text{for}~\bsx\in D(\bsy),~t\in (0,T]\\
u(\bsx,0,\bsy)=u_0(\bsx),&\text{for}~\bsx\in D(\bsy),\\
u(\bsx,t,\bsy)=0&\text{for}~\bsx\in \partial D(\bsy),~t\in (0,T],
\end{cases}%
$$
where $f\!:\mathcal D\to\mathbb R$ is a given source term and $u_0\!:\mathcal D\to\mathbb R$ a given initial condition. We shall consider the space-time weak formulation of this model problem (cf., e.g., \cite{kunothschwab,schwabstevenson}), since this is independent of the chosen PDE discretization scheme, thereby accommodating a very general analysis of the parametric regularity of the PDE solution. Moreover, it allows us to conveniently enforce the initial condition almost everywhere as part of the variational formulation. The space-time weak formulation over $D(\bsy)\times I$ can be stated as: %
\begin{align*}
&\int_I \int_{D(\bsy)}\tfrac{\partial}{\partial t}u(\bsx,t,\bsy) v_1(\bsx,t) \,{\rm d}\bsx\,{\rm d}t+\int_I \int_{D(\bsy)} \nabla u(\bsx,t,\bsy)\cdot \nabla v_1(\bsx,t)\,{\rm d}\bsx\,{\rm d}t\\
&\quad +\int_{D(\bsy)} u(\bsx,0,\bsy)v_2(\bsx)\,{\rm d}\bsx\\
&=\int_I \int_{D(\bsy)}f(\bsx)v_1(\bsx,t)\,{\rm d}\bsx\,{\rm d}t + \int_I \int_{D(\bsy)}u_0(\bsx)v_2(\bsx)\,{\rm d}\bsx
\end{align*}
for all $(v_1,v_2)\in L^2(I;H_0^1(D(\bsy)))\times L^2(D(\bsy))$. 
However, for the purposes of analysis, it is convenient to analyze instead the pullback solution given by
\begin{align*}
  \widehat u(\bsx,t,\bsy)=u(\boldsymbol V(\bsx,\bsy),t,\bsy).  
\end{align*}
By change of variables, the pullback space-time variational formulation is given by
\begin{align}\label{spacetimepullback}
b(\bsy;\widehat u,\widehat v)=F(\bsy;\widehat v)\quad\text{for all}~\widehat v=(\widehat v_1,\widehat v_2)\in\mathcal Y~\text{and}~\bsy\in U,
\end{align}
where 
\begin{align*}
b(\bsy;\widehat u,\widehat v):=&\int_I \int_{D_{\rm ref}} \tfrac{\partial}{\partial t}\widehat u(\bsx,t,\bsy)\widehat v_1(\bsx,t)\det J(\cdot,\bsy)\,{\rm d}\bsx\,{\rm d}t\\
&\quad +\int_I\int_{{D_{\rm ref}}} A(\bsx,\bsy)\nabla \widehat u(\bsx,t,\bsy)\cdot \nabla \widehat v_1(\bsx,t)\,{\rm d}\bsx\,{\rm d}t\\
&\quad+\int_{D_{\rm ref}} \widehat u(\bsx,0,\bsy)\widehat v_2(\bsx)\det J(\bsx,\bsy)\,{\rm d}\bsx
\end{align*}
and
\begin{align*}
F(\bsy;\widehat v):=&\int_I \int_{D_{\rm ref}}  f_{\rm ref}(\bsx,\bsy)\widehat v_1(\bsx,t)\,{\rm d}\bsx\,{\rm d}t\\
&\quad+\int_{D_{\rm ref}} \widehat u_0(\bsx,\bsy)\widehat v_2(\bsx)\det J(\bsx,\bsy)\,{\rm d}\bsx
\end{align*}
and we denote 
\begin{align*}
 f_{\rm ref}(\bsx,\bsy)&=f(\boldsymbol V(\bsx,\bsy))\det J(\bsx,\bsy),\\   
 \widehat u_0(\bsx,\bsy)&=u_0(\boldsymbol V(\bsx,\bsy),\bsy),\\
 A(\bsx,\bsy)&=(J(\bsx,\bsy)^{\rm T}J(\bsx,\bsy))^{-1}\det J(\bsx,\bsy)\quad \mathrm{and}\\
 \mathcal Y\!&:=\!L^2(I;H_0^1(D_{\rm ref}))\times L^2(D_{\rm ref}).
\end{align*}
 We remark that $\widehat u_0(\cdot,\bsy)$ agrees with $\widehat u(\cdot,0,\bsy)$ almost everywhere in $D_{\rm ref}$.%

Moreover, we define the space
\begin{align}\label{eq:x}
\mathcal X:=\{v\in L^2(I;H_0^1(D_{\rm ref}))\mid \tfrac{\partial}{\partial t}v\in L^2(I;H^{-1}(D_{\rm ref}))\}
\end{align}
endowed with the graph norm
$$
\|v\|_{\mathcal X}:=\bigg(\int_I\big(\|v(\cdot,t)\|_{H_0^1(D_{\rm ref})}^2+\|\tfrac{\partial}{\partial t}v(\cdot,t)\|_{H^{-1}(D_\mathrm{ref})}^2\big)\,{\rm d}t\bigg)^{1/2}.
$$
Furthermore, we denote $\mathcal L^2:=L^2(D_{\rm ref}\times I)$.%

For our analysis of the pullback heat equation, we need to impose an additional conditions on the regularity of the initial  condition  $u_0$ as well as on the domain mapping.  Hence, we state the following assumptions exclusively for this setting:
\begin{assumption}$ $
\begin{list}{{\rm (A\arabic{assumpenum})}}{\usecounter{assumpenum}%
  \setlength{\leftmargin}{2\parindent}%
  \setlength{\labelwidth}{2\parindent}%
  \setlength{\labelsep}{0.5em}%
  \renewcommand{\makelabel}[1]{\hfill #1}}
\setcounter{assumpenum}{7}
\item  The initial condition $u_0\!:\mathcal D\to\mathbb R$ is infinitely many times continuously differentiable and satisfies\label{a10}
$$
\|\partial_{\bsx}^{\bsnu}u_0\|_{L^2(\mathcal D)}\leq C_{u_0}\quad\text{for all}~\bsnu\in \mathbb N_0^d
$$
for some constant $C_{u_0}>0$.
\item There exist a constant $C\geq 1$, an exponent $\beta\geq 1$, and a sequence $\bsb=(b_j)_{j\geq 1}$ of non-negative numbers such that
\begin{align*}
    \|\partial_\bsy^\bsnu\boldsymbol{V}(\cdot,\bsy)\|_{W^{2,\infty}(D_\mathrm{ref})}\leq C(|\bsnu|!)^\beta \bsb^\bsnu\quad\text{for all}\quad \bsnu\in\mathscr{F},\bsy\in U.
\end{align*}
From this condition it follows directly that
\begin{align*}
    \|\partial_\bsy^\bsnu J(\cdot,\bsy)\|_{W^{1,\infty}(D_\mathrm{ref})}\leq C(|\bsnu|!)^\beta \bsb^\bsnu\quad\text{for all}\quad \bsnu\in\mathscr{F},\bsy\in U.
\end{align*}\label{A10}
\item There holds $\partial_t\widehat u(\cdot,\cdot,\bsy)\in \mathcal L^2$ for all $\bsy \in U$.\label{timederivL2}
\end{list}
\end{assumption}

We note that the regularity analysis for the parabolic problem can also be
carried out with the time derivative belonging to
\(L^2(I;H^{-1}(D_{\mathrm{ref}}))\), since this still yields the regularity
needed for the parametric derivatives of the solution (cf. Lemma \ref{L2reg}). However, for clarity of
presentation, we carry out the analysis under Assumption~\ref{timederivL2}.

We will show that under these assumptions, the pullback $\widehat u_0=u_0\circ \boldsymbol V(\cdot,\bsy)$ will belong to a Gevrey class and that for the parametric derivatives of the pullback solution there holds $\partial_\bsy^\bsnu\widehat u\in L^2(I;H_0^1(D_\mathrm{ref}))$ for all $\bsnu\in\mathscr F$, allowing us to complete the parametric regularity analysis for the parabolic model problem.

In what follows, we study QMC approximations of the expected value
$$
\mathbb E[\widehat u]=\int_U \widehat u(\bsy)\,{\rm d}\bsy,
$$
where $\widehat u(\cdot,\bsy)\in H_0^1(D_{\rm ref})$ if $\widehat u$ is the solution of the pullback Poisson equation~\eqref{eq:weakform} and $\widehat u(\cdot,\cdot,\bsy)\in L^2(I;H_0^1(D_{\rm ref}))$ if $\widehat u$ is the solution of the pullback heat equation~\eqref{spacetimepullback}.

\section{Quasi-Monte Carlo integration}\label{sec:qmc}

Let $U_s:=[-\frac12,\frac12]^s$. Consider the task of integrating a continuous function $F\!:U_s\to \mathbb R$ over an $s$-dimensional domain $U_s$
\[
I_s(F):=\int_{U_s}F(\boldsymbol{y})\,{\rm d}\boldsymbol{y}.
\]
A randomly shifted lattice rule (cf., e.g., \cite{dick2013high,dick2022lattice,kuonuyenssurvey,KuoNuyens2018}) is a method for QMC integration, represented as:
\begin{align}\label{QMC_rule}
&Q_{\text{ran}}(F):=\frac{1}{R}\sum_{r=1}^R Q_r(F),\quad\text{with}~Q_r(F):=\frac{1}{n}\sum_{i=1}^n F(\{\bst_i+\boldsymbol\Delta^{(r)}\}-\tfrac{\mathbf 1}{\mathbf 2}),
\end{align}
where $\boldsymbol\Delta^{(1)},\ldots,\boldsymbol\Delta^{(R)}$ are i.i.d.~random shifts drawn from $\mathcal U([0,1]^s)$, $\{\cdot\}$ denotes the componentwise fractional part, and the lattice points are given by
\[
\boldsymbol t_i:=\bigg\{\frac{i\boldsymbol z}{n}\bigg\},\quad i\in\{1,\ldots,n\},
\]
where $\boldsymbol z\in\{0,\ldots,n-1\}^s$ denotes the generating vector.
Assume that the function $F$ belongs to a weighted unanchored Sobolev space (cf., e.g., \cite{dick2013high,dick2022lattice,kuonuyenssurvey,KuoNuyens2018}) of dominating first-order smoothness $\mathcal W_{{s,\boldsymbol\gamma}}={\rm cl}_{\|\cdot\|_{\mathcal W_{s,\bsgamma}}}(\mathcal C^\infty(U_s))$ equipped with the norm
\[
\|F\|_{{\mathcal W_{s,\boldsymbol\gamma}}}:=\left(\sum_{\mathfrak u\subset\{1,\ldots,s\}}\frac{1}{\gamma_{\mathfrak u}}\int_{U_{|\mathfrak u|}}\left(\int_{U_{s-|\mathfrak u|}}\frac{\partial^{|\mathfrak u|}}{\partial \boldsymbol{y}_{\mathfrak u}}F(\boldsymbol{y})\,{\rm d}\boldsymbol{y}_{-\mathfrak u}\right)^2\,{\rm d}\boldsymbol{y}_{\mathfrak u}\right)^{1/2},
\]
where $\boldsymbol\gamma:=(\gamma_{\mathfrak u})_{\mathfrak u\subset\{1,\ldots,s\}}$ is a collection of positive weights, $\bsy_{\mathfrak u}:=(y_j)_{j\in\setu}$, and $\bsy_{-\mathfrak u}:=(y_j)_{j\in\{1,\ldots,s\}\setminus \mathfrak u}$. Then there exists a sequence of generating vectors using a component-by-component (CBC) algorithm~\citep{ckn06,dick2013high,cn06} with the following error bound on the integration error.
\begin{lemma}[cf.~{\citet[Theorem~5.1]{kuonuyenssurvey}}]\label{lemma:affineqmc}
Let $F\in\mathcal W_{s,\bsgamma}$ with positive weights $\boldsymbol{\gamma}=(\gamma_{\mathfrak u})_{\mathfrak u\subset\{1,\ldots,s\}}$. An $s$-dimensional randomly shifted lattice rule with $n=2^m$, $m\in\mathbb N$, points can be constructed by a CBC algorithm such that, for $R$ independent random shifts and for all $\lambda\in (1/2,1]$:
\[
\sqrt{\mathbb{E}_{\boldsymbol{\Delta}}\left[|I_{s}(F)-Q_{\rm ran}(F)|^2\right]}\leq\frac{1}{\sqrt{R}}\left(\frac{2}{n}\!\!\sum_{\varnothing\neq \mathfrak u\subset\{1,\ldots,s\}}\!\!\gamma_{\mathfrak u}^\lambda\bigg(\frac{2\zeta(2\lambda)}{(2\pi^2)^\lambda}\bigg)^{|\mathfrak u|}\right)^{\frac{1}{2\lambda}}\!\!\|F\|_{\mathcal W_{s,\boldsymbol\gamma}},
\]
where $\mathbb E_{\boldsymbol\Delta}[\cdot]$ denotes the expected value with respect to uniformly distributed random shifts over $[0,1]^s$ and $\zeta(x):=\sum_{k=1}^\infty k^{-x}$ is the Riemann zeta function for $x>1$.
\end{lemma}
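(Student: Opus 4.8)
The plan is to recognize Lemma~\ref{lemma:affineqmc} as the standard randomly shifted lattice rule error bound in the weighted unanchored Sobolev space of dominating first-order smoothness, and to reproduce the classical reproducing-kernel-Hilbert-space (RKHS) argument that culminates in a component-by-component (CBC) existence statement. First I would equip $\mathcal W_{s,\bsgamma}$ with its reproducing kernel
$$
K_{s,\bsgamma}(\bsx,\bsy)=\sum_{\mathfrak u\subseteq\{1,\ldots,s\}}\gamma_{\mathfrak u}\prod_{j\in\mathfrak u}\eta(x_j,y_j),\quad \eta(x,y)=B_1(x)B_1(y)+\tfrac12 B_2(|x-y|),
$$
where $B_1,B_2$ are the Bernoulli polynomials of degrees one and two. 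Since $F\mapsto I_s(F)-Q_r(F)$ is a bounded linear functional on this RKHS, the Cauchy--Schwarz inequality gives, for each realized shift $\bsDelta^{(r)}$, the pointwise bound $|I_s(F)-Q_r(F)|\le e_{\rm wor}(\bsz,\bsDelta^{(r)})\,\|F\|_{\mathcal W_{s,\bsgamma}}$, where $e_{\rm wor}$ is the worst-case error of the shifted rule.

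Next I would carry out the shift averaging. Because the shifts $\bsDelta^{(r)}$ are i.i.d.\ uniform and the shifted lattice rule is unbiased, the estimation error decomposes into $R$ i.i.d.\ mean-zero contributions, so that
$$
\bbE_{\bsDelta}\big[|I_s(F)-Q_{\rm ran}(F)|^2\big]=\tfrac1R\,\bbE_{\bsDelta}\big[|I_s(F)-Q_1(F)|^2\big]\le \tfrac1R\,[e_{n,s}^{\rm sh}(\bsz)]^2\,\|F\|_{\mathcal W_{s,\bsgamma}}^2,
$$
which produces the $1/\sqrt R$ variance reduction. Here $e_{n,s}^{\rm sh}(\bsz)$ denotes the shift-averaged worst-case error. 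Integrating the kernel representation of $e_{\rm wor}^2$ against the uniform shift, the degree-one Bernoulli contributions average to zero and, by the group structure of the lattice, the double sum over points collapses to a single sum, yielding the closed form
$$
[e_{n,s}^{\rm sh}(\bsz)]^2=\sum_{\varnothing\neq\mathfrak u\subseteq\{1,\ldots,s\}}\gamma_{\mathfrak u}\,\frac1n\sum_{i=1}^n\prod_{j\in\mathfrak u}B_2(t_{i,j}).
$$

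Finally I would run the CBC existence argument. Using the Fourier expansion $B_2(t)=\sum_{h\neq0}\frac{\re^{2\pi\ri ht}}{2\pi^2 h^2}$ together with the character-sum identity for lattice coordinates, I would show by averaging over the admissible choices of the next component that, with $z_1,\ldots,z_{d-1}$ fixed, some $z_d$ makes the partial shift-averaged error no larger than its mean. Raising the quantity to the power $\lambda$ and invoking Jensen's inequality to move the exponent inside the sum over $\mathfrak u$ generates the factors $\gamma_{\mathfrak u}^\lambda$ and $\big(2\zeta(2\lambda)/(2\pi^2)^\lambda\big)^{|\mathfrak u|}$: the power $\lambda$ of the coefficient $1/(2\pi^2 h^2)$ summed over $h\neq0$ is exactly $\tfrac{2}{(2\pi^2)^\lambda}\sum_{h\ge1}h^{-2\lambda}=\tfrac{2\zeta(2\lambda)}{(2\pi^2)^\lambda}$, so the constraint $\lambda\in(1/2,1]$ is precisely what guarantees convergence of $\zeta(2\lambda)$ while keeping $\lambda\le1$ for Jensen to apply in the correct direction; the prime-power choice $n=2^m$ is used to control the character sums cleanly. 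The main obstacle is this CBC induction step, namely verifying that the one-dimensional average over the admissible $z_d$ reproduces the claimed product structure and correctly accounts for the constant $2/n$ arising from bounding the full index set by the coprime contribution. Once the kernel identity and the shift-averaging formula are established, the remainder is bookkeeping.
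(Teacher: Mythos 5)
The paper does not prove this lemma at all: it is quoted verbatim from the cited source (Kuo and Nuyens, Theorem~5.1), so there is no in-paper argument to compare against. Your sketch reproduces the standard proof of that cited theorem and is correct in substance: the reproducing-kernel representation of the integration error, the unbiasedness of each shifted rule $Q_r$ giving the $1/R$ variance reduction, the closed form of the shift-averaged squared worst-case error as $\sum_{\varnothing\neq\mathfrak u}\gamma_{\mathfrak u}\frac1n\sum_{i}\prod_{j\in\mathfrak u}B_2(t_{i,j})$, and the CBC averaging argument combined with Jensen's inequality (valid precisely for $\lambda\le 1$) and the Fourier expansion of $B_2$, which produces the factor $2\zeta(2\lambda)/(2\pi^2)^\lambda$ per coordinate and requires $\lambda>1/2$ for convergence; the constant $2/n$ indeed comes from bounding $1/\varphi(n)\le 2/n$ for $n=2^m$. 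One minor imprecision: in the shift averaging the degree-one Bernoulli term does not vanish; rather $\int_0^1 B_1(\{x+\Delta\})B_1(\{y+\Delta\})\,\mathrm{d}\Delta=\tfrac12 B_2(\{x-y\})$, which combines with the already shift-invariant $\tfrac12 B_2(|x-y|)$ to give the single $B_2(\{x-y\})$ factor in your closed form — so your final formula is right even though the stated reason is not.
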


\section{Parametric regularity analysis}\label{sec:4}
Our first task  is to analyze $\partial_{\bsy}^{\bsnu}A(\bsx,\bsy)$ since it appears in the variational formulations for both the stationary and time-dependent model problems. We do this step-by-step:
\begin{addmargin}[3em]{0em}
\begin{enumerate}
\item[\em Step 1:] We first derive a regularity estimate for $\partial_{\bsy}^{\bsnu}(J(\bsx,\bsy)^{\rm T}J(\bsx,\bsy))$.%
\item[\em Step 2:] Step 1 will be used to deduce a bound for $\partial_{\bsy}^{\bsnu}(J(\bsx,\bsy)^{\rm T}J(\bsx,\bsy))^{-1}$.%
\item[\em Step 3:] We derive a regularity bound for $\partial_{\bsy}^{\bsnu}\det J(\bsx,\bsy)$.%
\item[\em Step 4:] Finally, we use the Leibniz product rule to combine steps 2 and 3 to obtain a bound for $\partial_{\bsy}^{\bsnu}((J(\bsx,\bsy)^{\rm T}J(\bsx,\bsy))^{-1}\det J(\bsx,\bsy))$.%
\end{enumerate}
\end{addmargin}
This analysis is carried out in Subsection~\ref{sec:Aregularity}

We shall also be interested in the parametric regularity of the pullback source term $f_{\rm ref}$. Our approach is as follows:
\begin{addmargin}[3em]{0em}
\begin{enumerate}
\item[\em Step 5:] We use Fa\`a di Bruno's formula to obtain a regularity bound for $f(\boldsymbol V(\bsx,\bsy))$.
\item[\em Step 6:] The Leibniz product rule is used in conjunction with step 5 to obtain a bound for $\partial_{\bsy}^{\bsnu}f_{\rm ref}(\bsx,\bsy)$.
\end{enumerate}
\end{addmargin}
Parametric regularity analysis for the right-hand side will be carried out in Subsection~\ref{sec:rhsreg}.

\subsection{Parametric regularity of the pullback coefficient}\label{sec:Aregularity}

\paragraph{Step 1.}We start by proving the following result.
\begin{lemma}\label{step1lemma} Under Assumptions~\ref{a1} and~\ref{assump:gevrey}, there holds
$$
\|\partial_{\bsy}^{\bsnu}(J(\cdot,\bsy)^{\rm T}J(\cdot,\bsy))\|_{L^\infty(D_{\rm ref})}\leq C^2((|\bsnu|+1)!)^{\beta}\boldsymbol b^{\bsnu}
$$
for all $\bsy\in U$ and $\bsnu\in\mathscr F$.
\end{lemma}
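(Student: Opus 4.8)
The plan is to expand $\partial_{\bsy}^{\bsnu}(J^{\rm T}J)$ via the Leibniz product rule for multi-indices and then bound each factor using the Gevrey estimate for $J$ that is already recorded as a consequence of Assumption~\ref{assump:gevrey}. Concretely, I would write
\[
\partial_{\bsy}^{\bsnu}\big(J(\cdot,\bsy)^{\rm T}J(\cdot,\bsy)\big)=\sum_{\bsm\le\bsnu}\binom{\bsnu}{\bsm}\big(\partial_{\bsy}^{\bsm}J(\cdot,\bsy)\big)^{\rm T}\,\partial_{\bsy}^{\bsnu-\bsm}J(\cdot,\bsy),
\]
take the $L^\infty$ (spectral) norm, and use its submultiplicativity together with the identity $\|M^{\rm T}\|_2=\|M\|_2$. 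Inserting the bound $\|\partial_{\bsy}^{\bsm}J\|_{L^\infty(D_{\rm ref})}\le C(|\bsm|!)^\beta\boldsymbol b^{\bsm}$ from Assumption~\ref{assump:gevrey} into both factors produces a prefactor $C^2\boldsymbol b^{\bsnu}$ multiplied by the combinatorial sum $\sum_{\bsm\le\bsnu}\binom{\bsnu}{\bsm}(|\bsm|!)^\beta(|\bsnu-\bsm|!)^\beta$, so the whole task reduces to estimating this sum by $((|\bsnu|+1)!)^\beta$.

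The crux is then a purely combinatorial estimate. I would group terms according to $k=|\bsm|$ and use the Vandermonde-type identity $\sum_{\bsm\le\bsnu,\,|\bsm|=k}\binom{\bsnu}{\bsm}=\binom{|\bsnu|}{k}$, read off from the coefficient of $x^k$ in $\prod_{j\ge1}(1+x)^{\nu_j}=(1+x)^{|\bsnu|}$. Writing $n:=|\bsnu|$ reduces the sum to $\sum_{k=0}^n\binom{n}{k}(k!)^\beta((n-k)!)^\beta$. Since $\binom{n}{k}=n!/(k!\,(n-k)!)$, each summand equals $n!\,(k!\,(n-k)!)^{\beta-1}$, and because $k!\,(n-k)!=n!/\binom{n}{k}\le n!$ with $\beta-1\ge0$, I can bound each summand by $n!\,(n!)^{\beta-1}=(n!)^\beta$. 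Summing over the $n+1$ values of $k$ and using $(n+1)(n!)^\beta\le(n+1)^\beta(n!)^\beta=((n+1)!)^\beta$ (valid since $\beta\ge1$) yields exactly the claimed factor $((|\bsnu|+1)!)^\beta$.

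The main obstacle is this combinatorial inequality $\sum_{k=0}^n\binom{n}{k}(k!)^\beta((n-k)!)^\beta\le((n+1)!)^\beta$; everything else is a routine application of the product rule and the hypotheses. The decisive observations are that the binomial coefficient cancels exactly one copy of each factorial, leaving $(k!\,(n-k)!)^{\beta-1}$, and that this quantity is bounded by $(n!)^{\beta-1}$ precisely because $\beta\ge1$; the extra factor $n+1$ coming from the number of summands is then absorbed into the increment from $n!$ to $(n+1)!$. Both of these steps rely on $\beta\ge1$, and it is worth flagging that the loss of one factorial order (from $|\bsnu|!$ to $(|\bsnu|+1)!$) is intrinsic to this estimate and will propagate through the subsequent steps of the analysis.
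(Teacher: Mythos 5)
Your proposal is correct and follows essentially the same route as the paper's proof: Leibniz expansion, the Gevrey bound on each factor, the Vandermonde convolution $\sum_{\bsm\le\bsnu,\,|\bsm|=\ell}\binom{\bsnu}{\bsm}=\binom{|\bsnu|}{\ell}$, the inequality $\ell!\,(n-\ell)!\le n!$, and absorption of the factor $n+1$ into $((n+1)!)^\beta$ via $\beta\ge1$. The only cosmetic difference is that you spell out the submultiplicativity of the spectral norm and the generating-function justification of the Vandermonde identity, which the paper leaves implicit.
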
\begin{proof}By the Leibniz product rule and Assumption~\ref{assump:gevrey}, we obtain for any $\bsnu\in\mathscr F$ that
\begin{align*}
\|\partial_{\bsy}^{\bsnu}(J(\cdot,\bsy)^{\rm T}J(\cdot,\bsy))\|_{L^\infty}&=\bigg\|\sum_{\boldsymbol m\leq \bsnu}\binom{\bsnu}{\boldsymbol m}\partial_{\bsy}^{\boldsymbol m}J(\cdot,\bsy)^{\rm T}\partial_{\bsy}^{\bsnu-\boldsymbol m}J(\cdot,\bsy)\bigg\|_{L^\infty}\\
&\leq C^2\boldsymbol b^{\bsnu}\sum_{\boldsymbol m\leq \bsnu}\binom{\bsnu}{\boldsymbol m}(|\boldsymbol m|!)^\beta(|\bsnu-\boldsymbol m|!)^\beta\\
&=C^2\boldsymbol b^{\bsnu}\sum_{\ell=0}^{|\bsnu|}(\ell!)^\beta((|\bsnu|-\ell)!)^\beta\sum_{\substack{\boldsymbol m\leq \bsnu\\|\boldsymbol m|=\ell}}\binom{\bsnu}{\boldsymbol m}\\
&=C^2\boldsymbol b^{\bsnu}\sum_{\ell=0}^{|\bsnu|}(\ell!)^\beta((|\bsnu|-\ell)!)^\beta\binom{|\bsnu|}{\ell}\\
&= C^2 |\bsnu|!\boldsymbol b^{\bsnu}\sum_{\ell=0}^{|\bsnu|}(\ell!)^{\beta-1}((|\bsnu|-\ell)!)^{\beta-1}\\
&\leq C^2 (|\bsnu|!)^\beta \boldsymbol b^{\bsnu}(|\bsnu|+1)\\
&\leq C^2((|\bsnu|+1)!)^\beta \boldsymbol b^{\bsnu},
\end{align*}
where we used the Vandermonde convolution $\sum_{\boldsymbol m\leq \bsnu,~|\boldsymbol m|=\ell}\binom{\bsnu}{\boldsymbol m}=\binom{|\bsnu|}{\ell}$ (cf., e.g., \citet[Equation~(5.1)]{gouldbook}) and the inequality $\ell!(v-\ell)!\leq v!$ which holds for all integers $0\leq \ell\leq\nu$.\end{proof}

\paragraph{Step 2.} Let $B(\bsx,\bsy):=J(\bsx,\bsy)^{\rm T}J(\bsx,\bsy)$. Since
$$
B(\bsx,\bsy)B(\bsx,\bsy)^{-1}=I_{d\times d},
$$
we can use the Leibniz product rule to obtain, for any $\bsnu\in\mathscr F\setminus\{\mathbf 0\}$, that
\begin{align*}
\sum_{\boldsymbol m\leq \bsnu}\binom{\bsnu}{\boldsymbol m}\partial_{\bsy}^{\boldsymbol m}B(\bsx,\bsy)\partial_{\bsy}^{\bsnu-\boldsymbol m}B(\bsx,\bsy)^{-1}=0.
\end{align*}
Separating out the $\boldsymbol m=\mathbf 0$ term, we obtain
$$
\partial_{\bsy}^{\bsnu}B(\bsx,\bsy)^{-1}=-B(\bsx,\bsy)^{-1}\sum_{\mathbf 0\neq \boldsymbol m\leq \bsnu}\binom{\bsnu}{\boldsymbol m}\partial_{\bsy}^{\boldsymbol m}B(\bsx,\bsy)\partial_{\bsy}^{\bsnu-\boldsymbol m}B(\bsx,\bsy)^{-1}.
$$
By Assumptions~\ref{a1}\,--\,\ref{assump:gevrey}, there holds $\|B(\cdot,\bsy)^{-1}\|_{L^\infty}\leq \sigma_{\min}^{-2}$ and we obtain
\begin{align*}
\|\partial_\bsy^{\bsnu}B(\cdot,\bsy)^{-1}\|_{L^\infty}\leq C^2\sigma_{\min}^{-2}\!\sum_{\mathbf 0\neq \boldsymbol m\leq \bsnu}\!\binom{\bsnu}{\boldsymbol m}((|\boldsymbol m|+1)!)^\beta\boldsymbol b^{\boldsymbol m}\|\partial_{\bsy}^{\bsnu-\boldsymbol m}B(\cdot,\bsy)^{-1}\|_{L^\infty}.
\end{align*}
Note that we have now expressed the $\boldsymbol\nu^{\rm th}$ order partial derivative in terms of the lower order partial derivatives. Defining $\Lambda_{\bsnu}:=\|\partial_{\bsy}^{\bsnu}B(\cdot,\bsy)^{-1}\|_{L^\infty}$ yields the multidimensional recurrence relation 
\begin{align}
&\Lambda_{\mathbf 0}\leq \sigma_{\min}^{-2},\label{eq:multirecu1}\\
&\Lambda_{\bsnu}\leq C^2\sigma_{\min}^{-2}\sum_{\mathbf 0\neq \boldsymbol m\leq \bsnu}\binom{\bsnu}{\boldsymbol m}((|\boldsymbol m|+1)!)^\beta\boldsymbol b^{\boldsymbol m}\Lambda_{\bsnu-\boldsymbol m}\quad\text{for all}~\bsnu\in\mathscr F\setminus\{\mathbf 0\}.\label{eq:multirecu2}
\end{align}
This recursion admits the following explicit bound.
\begin{lemma}
Let $(\Lambda_{\bsnu})_{\bsnu\in\mathscr F}$ be a sequence satisfying~\eqref{eq:multirecu1}\,--\,\eqref{eq:multirecu2}. Then there holds
$$
\Lambda_{\bsnu}\leq \sigma_{\min}^{-2|\bsnu|-2}C^{2|\bsnu|}a_{\bsnu}\boldsymbol b^{\bsnu}\quad\text{for all}~\bsnu\in \mathscr F,
$$
where
\begin{align*}
a_{\mathbf 0}=1\quad\text{and}\quad a_{\bsnu}=\sum_{\substack{\boldsymbol m\leq \bsnu\\ \boldsymbol m\neq\mathbf 0}}\binom{\bsnu}{\boldsymbol m}((|\boldsymbol m|+1)!)^\beta a_{\bsnu-\boldsymbol m}\quad\text{for all}~\bsnu\in\mathscr F\setminus\{\mathbf 0\}.
\end{align*}
\end{lemma}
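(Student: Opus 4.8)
The plan is to prove the bound by strong induction on the order $|\bsnu|$, exploiting the fact that the auxiliary sequence $(a_{\bsnu})_{\bsnu\in\mathscr F}$ has been defined precisely so that its recurrence mirrors~\eqref{eq:multirecu2} once the explicit powers of $\sigma_{\min}$, $C$, and $\boldsymbol b$ have been factored out. For the base case $\bsnu=\mathbf 0$, I would simply observe that the claimed estimate reduces to $\Lambda_{\mathbf 0}\leq\sigma_{\min}^{-2}$, which is exactly~\eqref{eq:multirecu1} since $a_{\mathbf 0}=1$ and $\boldsymbol b^{\mathbf 0}=1$.

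For the inductive step, I would fix $\bsnu\in\mathscr F\setminus\{\mathbf 0\}$ and assume the bound holds for every multi-index of strictly smaller order. This covers every term $\Lambda_{\bsnu-\boldsymbol m}$ appearing in~\eqref{eq:multirecu2}, because $\boldsymbol m\neq\mathbf 0$ forces $|\bsnu-\boldsymbol m|<|\bsnu|$. Substituting the induction hypothesis into~\eqref{eq:multirecu2} and using $\boldsymbol b^{\boldsymbol m}\boldsymbol b^{\bsnu-\boldsymbol m}=\boldsymbol b^{\bsnu}$, the prefactors collect into $C^{2}C^{2|\bsnu-\boldsymbol m|}=C^{2(|\bsnu|-|\boldsymbol m|+1)}$ and $\sigma_{\min}^{-2}\sigma_{\min}^{-2|\bsnu-\boldsymbol m|-2}=\sigma_{\min}^{-2|\bsnu|+2|\boldsymbol m|-4}$. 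Invoking the standing hypotheses $C\geq 1$ and $\sigma_{\min}\leq 1$ together with $|\boldsymbol m|\geq 1$, I would then bound $C^{2(|\bsnu|-|\boldsymbol m|+1)}\leq C^{2|\bsnu|}$ and $\sigma_{\min}^{-2|\bsnu|+2|\boldsymbol m|-4}\leq\sigma_{\min}^{-2|\bsnu|-2}$, the latter because $2|\boldsymbol m|-4\geq -2$. This yields
$$
\Lambda_{\bsnu}\leq\sigma_{\min}^{-2|\bsnu|-2}C^{2|\bsnu|}\boldsymbol b^{\bsnu}\sum_{\substack{\boldsymbol m\leq\bsnu\\ \boldsymbol m\neq\mathbf 0}}\binom{\bsnu}{\boldsymbol m}((|\boldsymbol m|+1)!)^\beta a_{\bsnu-\boldsymbol m},
$$
and the remaining sum is exactly the definition of $a_{\bsnu}$, which closes the induction.

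I expect the only delicate point to be the monotonicity bookkeeping for the powers of $\sigma_{\min}$ and $C$: the two estimates above relax in the correct direction \emph{only} because $\sigma_{\min}\leq 1$ and $C\geq 1$, so that raising the exponent of $\sigma_{\min}^{-1}$ and of $C$ is an over-estimate; dropping either hypothesis would reverse an inequality. Everything else is a direct matching of the two recurrences, so no genuinely hard analytic obstacle arises.
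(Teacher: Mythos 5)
Your proposal is correct and follows essentially the same route as the paper: the identical strong induction on $|\bsnu|$, substituting the hypothesis into~\eqref{eq:multirecu2}, collecting $\boldsymbol b^{\boldsymbol m}\boldsymbol b^{\bsnu-\boldsymbol m}=\boldsymbol b^{\bsnu}$, and relaxing the exponents of $C$ and $\sigma_{\min}$ using $C\geq 1$ (Assumption~(A3)) and $\sigma_{\min}\leq 1$ (Assumption~(A2)) so that the remaining sum is exactly $a_{\bsnu}$. Your exponent bookkeeping matches the paper's computation line for line, so nothing further is needed.
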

\begin{proof} We prove the claim by induction with respect to the order of the multi-index $\bsnu$. The basis of the induction $\bsnu=\mathbf 0$ follows immediately. We fix $\bsnu\in\mathscr F\setminus\{\mathbf 0\}$ and assume that the claim holds for all multi-indices with order less than $|\bsnu|$. Then
\begin{align*}
\Lambda_{\bsnu}&\leq C^2\sigma_{\min}^{-2}\sum_{\mathbf 0\neq \boldsymbol m\leq \bsnu}\binom{\bsnu}{\boldsymbol m}((|\boldsymbol m|+1)!)^\beta\boldsymbol b^{\boldsymbol m}\Lambda_{\bsnu-\boldsymbol m}\\
&\leq C^2\sigma_{\min}^{-2}\sum_{\mathbf 0\neq \boldsymbol m\leq \bsnu}\binom{\bsnu}{\boldsymbol m}((|\boldsymbol m|+1)!)^\beta\boldsymbol b^{\boldsymbol m}\sigma_{\min}^{-2|\bsnu|+2|\boldsymbol m|-2}\\
&\quad\quad\quad\quad\quad\quad\times C^{2|\bsnu|-2|\boldsymbol m|}a_{\boldsymbol \nu-\boldsymbol m}\boldsymbol b^{\boldsymbol\nu-\boldsymbol m}\\
&\leq C^{2|\bsnu|}\sigma_{\min}^{-2|\bsnu|-2}\boldsymbol b^{\bsnu}\sum_{\mathbf 0\neq \boldsymbol m\leq \bsnu}\binom{\bsnu}{\boldsymbol m}((|\boldsymbol m|+1)!)^\beta a_{\boldsymbol \nu-\boldsymbol m}\\
&=C^{2|\bsnu|}\sigma_{\min}^{-2|\bsnu|-2}\boldsymbol b^{\bsnu}a_{\bsnu},
\end{align*}
as desired.\end{proof}

By Lemmata~\ref{lemma:taubnd}--\ref{lemma:taubound}, we conclude the following.
\begin{lemma} Under Assumptions~\ref{a1}\,--\,\ref{assump:gevrey}, there holds
$$
\|\partial_\bsy^{\bsnu}B(\cdot,\bsy)^{-1}\|_{L^\infty(D_{\rm ref})}\leq \sigma_{\min}^{-2|\bsnu|-2}C^{2|\bsnu|}(|\bsnu|!)^\beta 4^{\beta |\bsnu|}\max\{1,2^{|\bsnu|-1}\}\boldsymbol b^{\bsnu}
$$
for all $\bsnu\in \mathscr F$ and $\bsy\in U$.
\end{lemma}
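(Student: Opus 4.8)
The plan is to reduce everything to a scalar estimate. The preceding lemma already supplies $\Lambda_{\bsnu}=\|\partial_\bsy^{\bsnu}B(\cdot,\bsy)^{-1}\|_{L^\infty}\le \sigma_{\min}^{-2|\bsnu|-2}C^{2|\bsnu|}a_{\bsnu}\boldsymbol b^{\bsnu}$, in which the factors $\sigma_{\min}^{-2|\bsnu|-2}$, $C^{2|\bsnu|}$ and $\boldsymbol b^{\bsnu}$ already match those in the assertion. Hence it suffices to prove the purely combinatorial bound $a_{\bsnu}\le (|\bsnu|!)^{\beta}4^{\beta|\bsnu|}\max\{1,2^{|\bsnu|-1}\}$ for the sequence defined by $a_{\mathbf 0}=1$ and $a_{\bsnu}=\sum_{\mathbf 0\neq \boldsymbol m\le\bsnu}\binom{\bsnu}{\boldsymbol m}((|\boldsymbol m|+1)!)^{\beta}a_{\bsnu-\boldsymbol m}$, which is precisely what Lemmata~\ref{lemma:taubnd}--\ref{lemma:taubound} are designed to handle.

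My first step would be to show that $a_{\bsnu}$ depends only on the order $|\bsnu|$. Grouping the defining sum according to $|\boldsymbol m|=\ell$ and invoking the Vandermonde convolution $\sum_{\boldsymbol m\le\bsnu,\,|\boldsymbol m|=\ell}\binom{\bsnu}{\boldsymbol m}=\binom{|\bsnu|}{\ell}$ --- exactly as in the proof of Lemma~\ref{step1lemma} --- an induction on $|\bsnu|$ yields $a_{\bsnu}=\alpha_{|\bsnu|}$, where the scalar sequence satisfies $\alpha_0=1$ and $\alpha_n=\sum_{\ell=1}^{n}\binom{n}{\ell}((\ell+1)!)^{\beta}\alpha_{n-\ell}$. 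This collapse from multi-indices to a one-dimensional recursion is the one genuinely structural point of the argument; everything afterwards is a scalar estimate.

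To bound $\alpha_n$ I would normalise by writing $\alpha_n=(n!)^{\beta}\beta_n$. Using the identity $\binom{n}{\ell}((\ell+1)!)^{\beta}((n-\ell)!)^{\beta}=(n!)^{\beta}\binom{n}{\ell}^{1-\beta}(\ell+1)^{\beta}$ together with $\binom{n}{\ell}^{1-\beta}\le 1$ (valid because $\beta\ge 1$ and $\binom{n}{\ell}\ge1$), the recursion relaxes to $\beta_n\le\sum_{\ell=1}^{n}(\ell+1)^{\beta}\beta_{n-\ell}$. Setting $\delta_n:=\beta_n\,4^{-\beta n}$ turns this into $\delta_n\le\sum_{\ell=1}^{n}w_\ell\,\delta_{n-\ell}$ with $w_\ell=((\ell+1)/4^{\ell})^{\beta}$. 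Since $\ell+1\le 4^{\ell}$ for $\ell\ge1$ one has $w_\ell\le(\ell+1)4^{-\ell}$, so $\sum_{\ell\ge1}w_\ell\le\sum_{\ell\ge1}(\ell+1)4^{-\ell}=\tfrac79<1$; a one-line induction then gives $\delta_n\le1$ for every $n$, i.e. $\alpha_n\le(n!)^{\beta}4^{\beta n}$.

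Substituting $a_{\bsnu}=\alpha_{|\bsnu|}\le(|\bsnu|!)^{\beta}4^{\beta|\bsnu|}$ back into the preceding lemma then gives the claim, since $\max\{1,2^{|\bsnu|-1}\}\ge1$. I expect the main obstacle to be the reduction to the scalar recursion: one has to verify carefully that the Vandermonde identity really does eliminate all dependence on the individual components of $\bsnu$, so that the one-dimensional recursion is legitimate. The summability estimate producing the geometric constant $4^{\beta|\bsnu|}$ is then routine, and in fact shows that the factor $\max\{1,2^{|\bsnu|-1}\}$ is not needed --- it is simply the (harmless) slack left by the more conservative bookkeeping in the appendix.
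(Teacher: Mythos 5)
Your proposal is correct, and its first move coincides with the paper's: both start from the preceding lemma's bound $\Lambda_{\bsnu}\le\sigma_{\min}^{-2|\bsnu|-2}C^{2|\bsnu|}a_{\bsnu}\boldsymbol b^{\bsnu}$ and reduce the problem to estimating the combinatorial sequence $a_{\bsnu}$. Where you diverge is in how that estimate is obtained. The paper simply invokes the general-purpose appendix machinery: Lemma~\ref{lemma:taubnd} with $q=1$ gives $a_{\bsnu}\le(|\bsnu|!)^{\beta}\tau_{|\bsnu|,\beta,1}$ (an inequality, obtained by the Vandermonde convolution plus the crude step $\binom{|\bsnu|}{\ell}\le\binom{|\bsnu|}{\ell}^{\beta}$), and Lemma~\ref{lemma:taubound} then bounds $\tau_{k,\beta,1}\le 4^{\beta k}\max\{1,2^{k-1}\}$ by a separate induction. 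You instead observe that the recursion for $a_{\bsnu}$ collapses \emph{exactly} to a scalar recursion $a_{\bsnu}=\alpha_{|\bsnu|}$ (your induction via Vandermonde is valid, since $a_{\bsnu-\boldsymbol m}$ depends only on $|\bsnu|-|\boldsymbol m|$ by the induction hypothesis and hence factors out of the inner sum), and then bound $\alpha_n$ by a normalisation-and-summability argument: the identity $\binom{n}{\ell}((\ell+1)!)^{\beta}((n-\ell)!)^{\beta}=(n!)^{\beta}\binom{n}{\ell}^{1-\beta}(\ell+1)^{\beta}$ is correct, $\binom{n}{\ell}^{1-\beta}\le1$ holds for $\beta\ge1$, and $\sum_{\ell\ge1}(\ell+1)4^{-\ell}=\tfrac79<1$ closes the induction $\delta_n\le1$. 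This yields the sharper bound $a_{\bsnu}\le(|\bsnu|!)^{\beta}4^{\beta|\bsnu|}$, confirming your remark that the factor $\max\{1,2^{|\bsnu|-1}\}$ in the stated lemma is slack inherited from the paper's more conservative (but reusable, $q$-general) appendix lemmata. Your argument is self-contained and slightly stronger; the paper's buys uniformity, since the same two appendix lemmata are recycled with other values of $q$ (e.g.\ $q=d^2$) elsewhere in the regularity analysis, where the exact scalar collapse would have to be re-verified for each recursion separately.
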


\paragraph{Step 3.} We begin by proving an auxiliary result needed for the regularity bound of $\partial_{\bsy}^{\bsnu}\det J(\bsx,\bsy)$. 
\begin{lemma}\label{lemma:Jinvbound}
 Under Assumptions~\ref{a1}\,--\,\ref{assump:gevrey}, there holds
 \begin{align*}
    \|\partial_\bsy^\bsnu J(\cdot,\bsy)^{-1}\|_{L^\infty(D_{\rm ref})}\leq C^{|\bsnu|}\sigma_{\min}^{-|\bsnu|-1}\max\{1,2^{|\bsnu|-1}\}(|\bsnu|!)^\beta\boldsymbol{b}^\bsnu
\end{align*}
for all $\bsy\in U$ and $\bsnu\in\mathscr F$.
\end{lemma}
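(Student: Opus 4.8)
The plan is to mirror the strategy of Step~2, but to apply it directly to the defining identity $J(\bsx,\bsy)J(\bsx,\bsy)^{-1}=I_{d\times d}$ rather than to $B(\bsx,\bsy)=J^{\rm T}J$. Working with $J$ itself is exactly what produces the sharp power $\sigma_{\min}^{-|\bsnu|-1}$ in the claim: routing the estimate through the factorization $J^{-1}=B^{-1}J^{\rm T}$ and invoking the Step~2 bound would instead load in factors of $\sigma_{\min}^{-2|\boldsymbol m|-2}$ and yield a strictly worse exponent. Assumption~\ref{a1} guarantees that $J(\cdot,\bsy)^{-1}$ is well defined, and Assumption~\ref{a2sv} bounds the smallest singular value of $J$ from below by $\sigma_{\min}$, so that $\|J(\cdot,\bsy)^{-1}\|_{L^\infty}\leq\sigma_{\min}^{-1}$.

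First I would differentiate $JJ^{-1}=I_{d\times d}$ with $\partial_\bsy^{\bsnu}$ for $\bsnu\in\mathscr F\setminus\{\mathbf 0\}$, apply the Leibniz product rule, and separate the $\boldsymbol m=\mathbf 0$ term to isolate $J\,\partial_\bsy^{\bsnu}J^{-1}$. Left-multiplying by $J^{-1}$ gives
$$
\partial_\bsy^{\bsnu}J^{-1}=-J^{-1}\!\!\sum_{\mathbf 0\neq\boldsymbol m\leq\bsnu}\binom{\bsnu}{\boldsymbol m}\partial_\bsy^{\boldsymbol m}J\,\partial_\bsy^{\bsnu-\boldsymbol m}J^{-1}.
$$
Combining $\|J(\cdot,\bsy)^{-1}\|_{L^\infty}\leq\sigma_{\min}^{-1}$ with the consequence of Assumption~\ref{assump:gevrey}, namely $\|\partial_\bsy^{\boldsymbol m}J(\cdot,\bsy)\|_{L^\infty}\leq C(|\boldsymbol m|!)^{\beta}\boldsymbol b^{\boldsymbol m}$, and writing $\Lambda_{\bsnu}:=\|\partial_\bsy^{\bsnu}J(\cdot,\bsy)^{-1}\|_{L^\infty}$, I obtain the recurrence $\Lambda_{\mathbf 0}\leq\sigma_{\min}^{-1}$ together with
$$
\Lambda_{\bsnu}\leq C\sigma_{\min}^{-1}\!\!\sum_{\mathbf 0\neq\boldsymbol m\leq\bsnu}\binom{\bsnu}{\boldsymbol m}(|\boldsymbol m|!)^{\beta}\boldsymbol b^{\boldsymbol m}\Lambda_{\bsnu-\boldsymbol m},\qquad\bsnu\in\mathscr F\setminus\{\mathbf 0\}.
$$
This is the exact analogue of \eqref{eq:multirecu1}--\eqref{eq:multirecu2}, with the milder weight $(|\boldsymbol m|!)^{\beta}$ in place of $((|\boldsymbol m|+1)!)^{\beta}$ and the prefactor $C\sigma_{\min}^{-1}$ in place of $C^2\sigma_{\min}^{-2}$.

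Next I would solve this recurrence by induction on $|\bsnu|$, exactly as in the closed-form lemma of Step~2, to get $\Lambda_{\bsnu}\leq\sigma_{\min}^{-|\bsnu|-1}C^{|\bsnu|}\widetilde a_{\bsnu}\boldsymbol b^{\bsnu}$, where $\widetilde a_{\mathbf 0}=1$ and $\widetilde a_{\bsnu}=\sum_{\mathbf 0\neq\boldsymbol m\leq\bsnu}\binom{\bsnu}{\boldsymbol m}(|\boldsymbol m|!)^{\beta}\widetilde a_{\bsnu-\boldsymbol m}$. The inductive step relies on $C\geq 1$ together with the monotonicity $\sigma_{\min}^{a}\leq\sigma_{\min}^{b}\Leftrightarrow a\geq b$ coming from $\sigma_{\min}\leq 1$: the $\boldsymbol m$-dependent prefactors $C^{1+|\bsnu|-|\boldsymbol m|}$ and $\sigma_{\min}^{-|\bsnu|+|\boldsymbol m|-2}$ are dominated by $C^{|\bsnu|}$ and $\sigma_{\min}^{-|\bsnu|-1}$ precisely because $|\boldsymbol m|\geq 1$ on the summation range, which lets the prefactor be pulled out of the sum uniformly. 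Finally, the sequence $\widetilde a_{\bsnu}$ obeys the same type of factorial recurrence already treated in Lemmata~\ref{lemma:taubnd}--\ref{lemma:taubound}; since its summand carries $(|\boldsymbol m|!)^{\beta}$ rather than $((|\boldsymbol m|+1)!)^{\beta}$, those lemmas furnish $\widetilde a_{\bsnu}\leq\max\{1,2^{|\bsnu|-1}\}(|\bsnu|!)^{\beta}$, i.e.\ without the extra $4^{\beta|\bsnu|}$ factor that appeared in Step~2. Substituting this into the bound for $\Lambda_{\bsnu}$ reproduces the claimed estimate exactly.

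The routine parts are the Leibniz expansion and the induction, both essentially copied from Step~2. The one place deserving care is the passage from the recurrence to the closed form for $\widetilde a_{\bsnu}$: one must confirm that the appendix lemmas apply verbatim to the $(|\boldsymbol m|!)^{\beta}$-weighted recurrence and deliver the clean $\max\{1,2^{|\bsnu|-1}\}(|\bsnu|!)^{\beta}$ bound, and that the $\sigma_{\min}\leq 1$ monotonicity is invoked in the correct direction so that all $|\boldsymbol m|$-dependence in the prefactors genuinely collapses under $|\boldsymbol m|\geq 1$. Beyond this bookkeeping I do not anticipate any real difficulty.
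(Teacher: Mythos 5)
Your proposal is correct and follows essentially the same route as the paper: differentiate $JJ^{-1}=I_{d\times d}$ via Leibniz, isolate the $\boldsymbol m=\mathbf 0$ term, and solve the resulting recurrence $\xi_\bsnu\leq C\sigma_{\min}^{-1}\sum_{\mathbf 0\neq\boldsymbol m\leq\bsnu}\binom{\bsnu}{\boldsymbol m}(|\boldsymbol m|!)^\beta\boldsymbol b^{\boldsymbol m}\xi_{\bsnu-\boldsymbol m}$ by induction, which the paper packages as Lemma~\ref{lemma:xi} in the appendix (your $\widetilde a_\bsnu$ is exactly its $\alpha_\bsnu$). The only point of care is the final factorial bound: applying Lemma~\ref{lemma:taubound} verbatim with $q=0$ would leave a spurious $2^{\beta|\bsnu|}$ factor, so one should instead either note that the $q=0$ recursion~\eqref{eq:tauseq} gives $\tau_{k,\beta,0}=\max\{1,2^{k-1}\}$ exactly, or run the dedicated induction for $\alpha_\bsnu$ as in the proof of Lemma~\ref{lemma:xi}.
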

\begin{proof}
Let $\bsnu\in\mathscr F\setminus\{\mathbf 0\}$. Since \begin{align*}
    J(\bsx,\bsy)J(\bsx,\bsy)^{-1}=I_{d\times d},
\end{align*}
we obtain by the Leibniz product rule that
\begin{align*}
    \sum_{\boldsymbol{m}\leq \bsnu}\binom{\bsnu}{\boldsymbol m}\partial_\bsy^{\boldsymbol{m}}J(\bsx,\bsy)\partial_\bsy^{\bsnu-\boldsymbol{m}}J(\bsx,\bsy)^{-1}=0,
\end{align*}
which is equivalent to
\begin{align*}
\partial_\bsy^\bsnu J(\bsx,\bsy)^{-1}=-J(\bsx,\bsy)^{-1}\sum_{\boldsymbol{0}\neq\boldsymbol{m} \leq \bsnu}\binom{\bsnu}{\boldsymbol{m}}\partial_\bsy^{\boldsymbol{m}}J(\bsx,\bsy)\partial_\bsy^{\bsnu-\boldsymbol{m}}J(\bsx,\bsy)^{-1}.
\end{align*}
It follows that 
\begin{align*}
&\|\partial_\bsy^\bsnu J(\cdot,\bsy)^{-1}\|_{L^\infty}\\
\leq &\|J(\cdot,\bsy)^{-1}\|_{L^\infty}\sum_{\boldsymbol{0}\neq\boldsymbol{m}\leq \bsnu}\binom{\bsnu}{\boldsymbol{m}}\|\partial_\bsy^{\boldsymbol{m}}J(\cdot,\bsy)\|_{L^\infty}\|\partial_\bsy^{\bsnu-\boldsymbol{m}}J(\cdot,\bsy)^{-1}\|_{L^\infty}.
\end{align*}
Now using that $\|J(\cdot,\bsy)^{-1}\|_{L^\infty}\leq \sigma_{\min}^{-1}$ and the fact that by Assumption~\ref{assump:gevrey} we have $\|\partial_\bsy^{\boldsymbol{m}}J(\cdot,\bsy)\|_{L^\infty}\leq C(|\boldsymbol{m}|!)^\beta\boldsymbol{b}^{\boldsymbol{m}}$, we find
\begin{align*}
 \|\partial_\bsy^\bsnu J(\cdot,\bsy)^{-1}\|_{L^\infty}\leq C\sigma^{-1}_{\min} \sum_{\boldsymbol{0}\neq\boldsymbol{m}\leq \bsnu}\binom{\bsnu}{\boldsymbol{m}}(|\boldsymbol{m}|!)^\beta\boldsymbol{b}^{\boldsymbol{m}} \|\partial_\bsy^{\bsnu-\boldsymbol{m}}J(\cdot,\bsy)^{-1}\|_{L^\infty}. 
\end{align*}
We define $\xi_\bsnu:=\|\partial_\bsy^\bsnu J(\cdot,\bsy)^{-1}\|_{L^\infty}$. Solving for $\|\partial_\bsy^\bsnu J(\cdot,\bsy)^{-1}\|_{L^\infty}$ in the regularity bound is equivalent to solving 
\begin{align}
    \xi_{\boldsymbol{0}}&\leq \sigma^{-1}_{\min}\label{eq:xirecu1}\\
    \xi_\bsnu &\leq C\sigma^{-1}_{\min} \sum_{\boldsymbol{0}\neq\boldsymbol{m}\leq \bsnu}\binom{\bsnu}{\boldsymbol{m}}(|\boldsymbol{m}|!)^\beta\boldsymbol{b}^{\boldsymbol{m}} \xi_{\bsnu-\boldsymbol{m}}. \label{eq:xirecu2}
\end{align}
for all $\bsnu\in \mathscr{F}\setminus \{\boldsymbol{0}\}$. The claim follows by an application of Lemma~\ref{lemma:xi} from the Appendix.\end{proof}

\begin{lemma} Under Assumptions~\ref{a1}\,--\,\ref{assump:gevrey}, there holds
$$
\|\partial_{\bsy}^{\bsnu}\det J(\cdot,\bsy)\|_{L^\infty(D_{\rm ref})}\leq \sigma_{\max}^d C_{\det J}^{|\bsnu|}\frac{((|\bsnu|+d^2-1)!)^\beta}{((d^2-1)!)^\beta}\boldsymbol b^{\bsnu},%
$$
for all $\bsnu\in\mathscr F,~\bsy\in U$, where 
$$
C_{\det J}=\frac{2^\beta C}{\sigma_{\min}}.
$$
\end{lemma}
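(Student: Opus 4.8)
The plan is to use \emph{Jacobi's formula} as the engine driving the estimate. For any $\bsnu\neq\bszero$, pick $k\in\supp(\bsnu)$; wherever $J(\cdot,\bsy)$ is invertible we have
\begin{equation*}
\partial_{y_k}\det J(\bsx,\bsy)=\det J(\bsx,\bsy)\,\mathrm{tr}\!\big(J(\bsx,\bsy)^{-1}\partial_{y_k}J(\bsx,\bsy)\big),
\end{equation*}
which recasts $\det J$ as the solution of a first-order relation in each coordinate direction. The base case $\bsnu=\bszero$ is supplied directly by Assumption~\ref{a2sv}: since $|\det J(\bsx,\bsy)|$ equals the product of the $d$ singular values of $J(\bsx,\bsy)$, each of which is at most $\sigma_{\max}$, we get $\|\det J(\cdot,\bsy)\|_{L^\infty(D_{\rm ref})}\le\sigma_{\max}^d$, which already accounts for the prefactor in the claimed bound.

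Next I would write $\bsnu=(\bsnu-\bse_k)+\bse_k$ and apply $\partial_\bsy^{\bsnu-\bse_k}$ to Jacobi's formula. Two nested applications of the Leibniz product rule are needed: one to split the outer product $\det J\cdot\mathrm{tr}(J^{-1}\partial_{y_k}J)$, and one inside the trace to split $J^{-1}\partial_{y_k}J$. Bounding the trace by $|\mathrm{tr}(M)|\le d\,\|M\|_2$, the inner factors are controlled by Lemma~\ref{lemma:Jinvbound} (for the parametric derivatives of $J^{-1}$) and Assumption~\ref{assump:gevrey} (for those of $J$). Writing $\psi_\bsnu:=\|\partial_\bsy^\bsnu\det J(\cdot,\bsy)\|_{L^\infty(D_{\rm ref})}$, this produces a multidimensional recurrence in which $\psi_\bsnu$ is dominated by a weighted convolution of the lower-order quantities $\psi_{\bsm}$ with $|\bsm|<|\bsnu|$, the weights being built from factorials $(|\cdot|!)^\beta$, powers of $C$ and of $\sigma_{\min}^{-1}$, and the sequence $\bsb$.

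The final step is to solve this recurrence by the same induction on $|\bsnu|$ used in Lemmas~\ref{step1lemma} and~\ref{lemma:Jinvbound} and in the appendix recurrence estimates (cf.\ Lemmas~\ref{lemma:taubnd}--\ref{lemma:taubound}). I would verify the ansatz $\psi_\bsnu\le\sigma_{\max}^d\,C_{\det J}^{|\bsnu|}\,\frac{((|\bsnu|+d^2-1)!)^\beta}{((d^2-1)!)^\beta}\,\bsb^\bsnu$ by substitution, collapsing the multi-index sums into one-dimensional sums over $\ell=|\bsm|$ via the Vandermonde convolution and using $\ell!(|\bsnu|-\ell)!\le|\bsnu|!$ together with $(|\bsnu|+1)!\le 2^{|\bsnu|}|\bsnu|!$ to absorb the $+1$ shifts created by the first-order derivative in Jacobi's formula. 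The constant $C_{\det J}=2^\beta C/\sigma_{\min}$ then records one factor of $C$ and one of $\sigma_{\min}^{-1}$ per differentiation (from the pair $J^{-1}$, $\partial J$ in the trace), together with the $2^\beta$ coming from the factorial shift raised to the power $\beta$.

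The main obstacle I expect is the bookkeeping of constants rather than any conceptual difficulty. The nested Leibniz expansions generate several factorial shifts and binomial sums that must be recombined into a \emph{single} rising factorial anchored at precisely $d^2-1$, and the dimensional factor $d$ produced by the trace bound has to be absorbed into the combinatorics (rather than surviving as a spurious $d^{|\bsnu|}$) so that the per-order constant reduces exactly to $C_{\det J}=2^\beta C/\sigma_{\min}$. Getting the anchor to be $d^2-1$, and not merely $d-1$, is the delicate point: it is dictated by the number of scalar products the trace $\mathrm{tr}(J^{-1}\partial_{y_k}J)$ contributes, i.e.\ by the $d\times d$ matrix structure entering at each recursion step.
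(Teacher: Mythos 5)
Your proposal follows essentially the same route as the paper's proof: Jacobi's formula driving an induction on $|\bsnu|$, the base case from the singular-value bound of Assumption~\ref{a2sv}, nested Leibniz expansions with Lemma~\ref{lemma:Jinvbound} and Assumption~\ref{assump:gevrey} controlling the factors inside the trace, and Vandermonde convolution to collapse the multi-index sums. The one step you leave implicit --- how the dimensional factor from the trace gets absorbed so that the rising factorial anchors exactly at $d^2-1$ with no surviving $d^{|\bsnu|}$ --- is resolved in the paper by the exact Gosper-summable evaluation $\sum_{\ell=0}^{n}(\ell+d^2-1)!/\ell!=\tfrac{1}{d^2}(n+d^2)!/n!$, whose $1/d^{2\beta}$ cancels the $d^2$ arising from writing the trace entrywise (your variant $|\mathrm{tr}(M)|\le d\,\|M\|_2$ produces only a factor $d$ and closes the same way).
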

\begin{proof} If $\bsnu=\mathbf 0$, then the claim clearly holds since by Assumption \ref{a2sv} and the fact that 
\begin{align*}
    \det J=\prod_{\sigma\in\sigma(J)}\sigma
\end{align*}
we have
\begin{align*}
 \|\det J(\cdot,\bsy)\|_{L^\infty}\leq \sigma_{\max}^d.   
\end{align*}
Next, we let $\bsnu\in\mathscr F$ and $k\geq 1$. We  will proceed by induction and assume that the claim has already been proved for all multi-indices with order $\leq |\bsnu|$. We can use Jacobi's formula (cf., e.g., \citet[Theorem 8.1]{JacobiFormula}) and the Leibniz product rule in conjunction to obtain
\begin{align}\notag
\partial_{\bsy}^{\bsnu+\boldsymbol e_k}\det J(\bsx,\bsy)&=\partial_{\bsy}^{\bsnu}\big(\det J(\bsx,\bsy){\rm tr}\big(J(\bsx,\bsy)^{-1}\partial_{\bsy}^{\boldsymbol e_k}J(\bsx,\bsy))\big)\\
&=\sum_{\boldsymbol m\leq\bsnu}\binom{\bsnu}{\boldsymbol m}\partial_{\bsy}^{\boldsymbol m}\det J(\bsx,\bsy)\partial_{\bsy}^{\bsnu-\boldsymbol m}{\rm tr}\big(J(\bsx,\bsy)^{-1}\partial_{\bsy}^{\boldsymbol e_k}J(\bsx,\bsy)\big),\label{eq:detjind}
\end{align}
where $\boldsymbol e_k\in\mathscr F$ denotes the multi-index whose $k^{\rm th}$ entry is 1 and the other entries are 0. By Lemma~\ref{lemma:Jinvbound}, there holds
$$
\|\partial_{\bsy}^{\bsnu}J(\cdot,\bsy)^{-1}\|_{L^\infty}\leq C^{|\bsnu|}\sigma_{\min}^{-|\bsnu|-1}\max\{1,2^{|\bsnu|-1}\}(|\bsnu|!)^\beta\boldsymbol b^{\bsnu}.
$$
Hence 
\begin{align*}
{\rm tr}\big(J(\bsx,\bsy)^{-1}\partial_{\bsy}^{\boldsymbol e_k}J(\bsx,\bsy)\big)&=\sum_{k=1}^d\sum_{\ell=1}^d [J(\bsx,\bsy)^{-1}]_{k,\ell}[\partial_{\bsy}^{\boldsymbol e_k}J(\bsx,\bsy)]_{\ell,k}
\end{align*}
and an application of the Leibniz product rule reveals that
\begin{align*}
&\|\partial_{\bsy}^{\boldsymbol m}{\rm tr}\big(J(\cdot,\bsy)^{-1}\partial_{\bsy}^{\boldsymbol e_k}J(\cdot,\bsy)\big)\|_{L^\infty}\\
&=\bigg\|\sum_{k=1}^d\sum_{\ell=1}^d \sum_{\boldsymbol w\leq \boldsymbol m}\binom{\boldsymbol m}{\boldsymbol w}[\partial_{\bsy}^{\boldsymbol w}J(\cdot,\bsy)^{-1}]_{k,\ell}[\partial_{\bsy}^{\boldsymbol m-\boldsymbol w+\boldsymbol e_k}J(\cdot,\bsy)]_{\ell,k}\bigg\|_{L^\infty}\\
&\leq C^{|\boldsymbol m|+1}\sigma_{\min}^{-|\boldsymbol m|-1}d^2\boldsymbol b^{\boldsymbol m+\boldsymbol e_k}\\
&\quad \times\sum_{\boldsymbol w\leq \boldsymbol m}\binom{\boldsymbol m}{\boldsymbol w}\max\{1,2^{|\boldsymbol w|-1}\}(|\boldsymbol w|!)^\beta((|\boldsymbol m|-|\boldsymbol w|+1)!)^\beta\\
&\leq C^{|\boldsymbol m|+1}\sigma_{\min}^{-|\boldsymbol m|-1}d^2(|\boldsymbol m|!)^\beta\boldsymbol b^{\boldsymbol m+\boldsymbol e_k}\sum_{\ell=0}^{|\boldsymbol m|}\max\{1,2^{\ell-1}\}(|\boldsymbol m|-\ell+1)^\beta.
\end{align*}
Here, we can estimate
\begin{align*}
\sum_{\ell=0}^{|\boldsymbol m|}\max\{1,2^{\ell-1}\}(|\boldsymbol m|-\ell+1)^\beta&\leq \bigg(\sum_{\ell=0}^{|\boldsymbol m|}\max\{1,2^{\ell-1}\}^{1/\beta}(|\boldsymbol m|-\ell+1)\bigg)^\beta\\
&\leq \bigg(\sum_{\ell=0}^{|\boldsymbol m|}\max\{1,2^{\ell-1}\}(|\boldsymbol m|-\ell+1)\bigg)^\beta\\
&=\big(2^{|\boldsymbol m|+1}-1\big)^\beta\leq 2^{\beta(|\boldsymbol m|+1)},
\end{align*}
where we used the inequality $\sum_{k} a_k\leq \big(\sum_k a_k^\lambda\big)^{1/\lambda}$ which holds for all $a_k\geq 0$ and $\lambda\in(0,1]$ (cf., e.g.,~\citet[Theorem~19]{inequalities}). This yields
\begin{align}
&\|\partial_{\bsy}^{\boldsymbol m}{\rm tr}\big(J(\cdot,\bsy)^{-1}\partial_{\bsy}^{\boldsymbol e_k}J(\cdot,\bsy)\big)\|_{L^\infty}\notag\\
&\leq C^{|\boldsymbol m|+1}\sigma_{\min}^{-|\boldsymbol m|-1}d^22^{\beta(|\boldsymbol m|+1)}(|\boldsymbol m|!)^\beta\boldsymbol b^{\boldsymbol m+\boldsymbol e_k}.\label{eq:trjinvj}%
\end{align}
Now the induction hypothesis together with the inequality~\eqref{eq:trjinvj} imply that~\eqref{eq:detjind} can be bounded by
\begin{align}
&\|\partial_{\bsy}^{\bsnu+\boldsymbol e_k}\det J(\cdot,\bsy)\|_{L^\infty}\notag\\
&\leq \sum_{\boldsymbol m\leq \bsnu}\binom{\bsnu}{\boldsymbol m}\sigma_{\max}^d \bigg(\frac{2^\beta C}{\sigma_{\min}}\bigg)^{|\boldsymbol m|}\frac{((|\boldsymbol m|+d^2-1)!)^\beta}{((d^2-1)!)^\beta}\boldsymbol b^{\boldsymbol m}\notag\\
&\quad \times C^{|\bsnu|-|\boldsymbol m|+1}\sigma_{\min}^{-|\bsnu|+|\boldsymbol m|-1}d^22^{\beta(|\bsnu|-|\boldsymbol m|+1)}((|\bsnu|-|\boldsymbol m|)!)^\beta\boldsymbol b^{\bsnu-\boldsymbol m+\boldsymbol e_k}\notag\\
&=\sigma_{\max}^d C_{\det J}^{|\bsnu|+1}\boldsymbol b^{\bsnu+\boldsymbol e_k}\frac{d^2}{((d^2-1)!)^\beta}\notag\\
&\quad \times \sum_{\ell=0}^{|\bsnu|}((\ell+d^2-1)!)^\beta((|\bsnu|-\ell)!)^\beta\sum_{\substack{\boldsymbol m\leq\bsnu\notag\\ |\boldsymbol m|=\ell}}\binom{\bsnu}{\boldsymbol m}\notag\\
&\leq\sigma_{\max}^d C_{\det J}^{|\bsnu|+1}\boldsymbol b^{\bsnu+\boldsymbol e_k}\frac{d^2}{((d^2-1)!)^\beta}(|\bsnu|!)^\beta\sum_{\ell=0}^{|\bsnu|}\frac{((\ell+d^2-1)!)^\beta}{(\ell!)^\beta}.\label{eq:pregosp}
\end{align}
Here, we can estimate 
\begin{align*}
\sum_{\ell=0}^{|\bsnu|}\frac{((\ell+d^2-1)!)^\beta}{(\ell!)^\beta}&\leq \bigg(\sum_{\ell=0}^{|\bsnu|}\frac{(\ell+d^2-1)!}{\ell!}\bigg)^\beta.%
\end{align*}
The above sum is \emph{Gosper-summable} (cf., e.g., \cite{aeqb}): by letting $t_{\ell}:=\frac{(\ell+d^2-1)!}{\ell!}$ and $z_{\ell}:=\frac{\ell}{d^2}\frac{(\ell+d^2-1)!}{\ell!}$, there holds $t_{\ell}=z_{\ell+1}-z_{\ell}$ and hence
\begin{align}
\sum_{\ell=0}^{|\bsnu|}\frac{(\ell+d^2-1)!}{\ell!}=z_{|\bsnu|+1}-z_0=\frac{1}{d^2}\frac{(|\bsnu|+d^2)!}{|\bsnu|!}.\label{eq:postgosp}
\end{align}
Combining the estimates~\eqref{eq:pregosp}--\eqref{eq:postgosp} yields
\begin{align*}
\|\partial_{\bsy}^{\bsnu+\boldsymbol e_k}\det J(\cdot,\bsy)\|_{L^\infty}&\leq\sigma_{\max}^d C_{\det J}^{|\bsnu|+1}\boldsymbol b^{\bsnu+\boldsymbol e_k}\frac{((|\bsnu|+d^2)!)^\beta}{((d^2-1)!)^\beta}
\end{align*}
as desired.
\end{proof}

\paragraph{Step 4.} By combining the previous steps, we obtain the parametric regularity bound for the pullback diffusion coefficient.

\begin{lemma}\label{lemma:productreg} Let $\bsy\in U$ and $\bsnu\in\mathscr F$. Under Assumptions~\ref{a1}\,--\,\ref{assump:gevrey}, there holds
$$
\|\partial_{\bsy}^{\bsnu}A(\cdot,\bsy)\|_{L^\infty(D_{\rm ref})}\leq C_{A,1}C_{A,2}^{|\bsnu|}\max\{1,2^{|\bsnu|-1}\}\frac{((|\bsnu|+d^2)!)^\beta}{(d^2!)^\beta}\boldsymbol b^{\bsnu},
$$
where
$$
C_{A,1}=\sigma_{\max}^d \sigma_{\min}^{-2}\quad\text{and}\quad C_{A,2}=C_{\det J}\sigma_{\min}^{-2}C^2 4^\beta.
$$
\end{lemma}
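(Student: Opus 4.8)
The plan is to exploit the factorization $A(\bsx,\bsy)=B(\bsx,\bsy)^{-1}\det J(\bsx,\bsy)$, where $B:=J^{\rm T}J$, and feed in the bounds already established in Steps~2 and~3. Applying the Leibniz product rule,
\begin{align*}
\partial_\bsy^\bsnu A(\bsx,\bsy)=\sum_{\boldsymbol m\leq\bsnu}\binom{\bsnu}{\boldsymbol m}\partial_\bsy^{\boldsymbol m}B(\bsx,\bsy)^{-1}\,\partial_\bsy^{\bsnu-\boldsymbol m}\det J(\bsx,\bsy),
\end{align*}
I would take the $L^\infty$ norm and use the triangle inequality together with the fact that $\det J$ is scalar-valued (so it factors out of the spectral norm). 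Inserting the Step~2 estimate for $\|\partial_\bsy^{\boldsymbol m}B^{-1}\|_{L^\infty}$ and the Step~3 estimate for $\|\partial_\bsy^{\bsnu-\boldsymbol m}\det J\|_{L^\infty}$ bounds each summand by a product of explicit constants, factorials, and monomials in $\boldsymbol b$.

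Next I would collect $\boldsymbol b^{\boldsymbol m}\boldsymbol b^{\bsnu-\boldsymbol m}=\boldsymbol b^{\bsnu}$, factor out $C_{A,1}=\sigma_{\max}^d\sigma_{\min}^{-2}$, and reduce the multi-index sum to a scalar sum over $\ell=|\boldsymbol m|$ via the Vandermonde convolution $\sum_{\boldsymbol m\leq\bsnu,\,|\boldsymbol m|=\ell}\binom{\bsnu}{\boldsymbol m}=\binom{|\bsnu|}{\ell}$, exactly as in Step~1. Two monotonicity observations then separate the constants from the factorials. First, because $\sigma_{\min}\leq1\leq C$ forces each of $\sigma_{\min}^{-2}$, $C^2$, $4^{\beta}$, $C_{\det J}$ to be at least $1$, the $\ell$-dependent constant product $\sigma_{\min}^{-2\ell}C^{2\ell}4^{\beta\ell}C_{\det J}^{|\bsnu|-\ell}$ is dominated by $C_{A,2}^{|\bsnu|}$ with $C_{A,2}=C_{\det J}\sigma_{\min}^{-2}C^24^{\beta}$. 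Second, since $\ell\leq|\bsnu|$, the factor $\max\{1,2^{\ell-1}\}$ inherited from the Step~2 bound is dominated by $\max\{1,2^{|\bsnu|-1}\}$. Pulling both factors out of the sum leaves the purely combinatorial quantity
\begin{align*}
S:=\frac{1}{((d^2-1)!)^\beta}\sum_{\ell=0}^{|\bsnu|}\binom{|\bsnu|}{\ell}(\ell!)^\beta\,((|\bsnu|-\ell+d^2-1)!)^\beta.
\end{align*}

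The crux of the argument is the estimate $S\leq ((|\bsnu|+d^2)!)^\beta/(d^2!)^\beta$, and this is where I expect the only genuine difficulty. My approach is to absorb the binomial coefficient into a single $\beta$-th power: since $\beta\geq1$ and $\binom{|\bsnu|}{\ell}\geq1$ we have $\binom{|\bsnu|}{\ell}\leq\binom{|\bsnu|}{\ell}^\beta$, so each summand becomes $\big(\binom{|\bsnu|}{\ell}\ell!\,(|\bsnu|-\ell+d^2-1)!\big)^\beta=\big(|\bsnu|!\,\tfrac{(|\bsnu|-\ell+d^2-1)!}{(|\bsnu|-\ell)!}\big)^\beta$. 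The superadditivity inequality $\sum_\ell a_\ell^\beta\leq(\sum_\ell a_\ell)^\beta$ for $\beta\geq1$ (the same $\lambda$-power inequality used in Step~3) then gives, after reindexing $k=|\bsnu|-\ell$,
\begin{align*}
\sum_{\ell=0}^{|\bsnu|}\binom{|\bsnu|}{\ell}(\ell!)^\beta\,((|\bsnu|-\ell+d^2-1)!)^\beta\leq (|\bsnu|!)^\beta\bigg(\sum_{k=0}^{|\bsnu|}\frac{(k+d^2-1)!}{k!}\bigg)^\beta,
\end{align*}
and the inner sum is precisely the Gosper-summable telescoping sum already evaluated in~\eqref{eq:postgosp}, equal to $\tfrac{1}{d^2}\tfrac{(|\bsnu|+d^2)!}{|\bsnu|!}$. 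Hence the sum is bounded by $((|\bsnu|+d^2)!)^\beta/(d^2)^\beta$, and dividing by $((d^2-1)!)^\beta$ and using $d^2\,(d^2-1)!=d^2!$ yields $S\leq((|\bsnu|+d^2)!)^\beta/(d^2!)^\beta$. Reassembling $C_{A,1}$, $C_{A,2}^{|\bsnu|}$, $\max\{1,2^{|\bsnu|-1}\}$, $S$, and $\boldsymbol b^{\bsnu}$ then reproduces the claimed bound exactly; everything outside $S$ is routine bookkeeping, and the two non-routine ideas are the absorption of the binomial coefficient into a $\beta$-th power and the reuse of the Step~3 Gosper identity.
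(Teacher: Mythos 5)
Your proposal is correct and follows essentially the same route as the paper's proof: Leibniz product rule on $B^{-1}\det J$, insertion of the Step~2 and Step~3 bounds, domination of the $\ell$-dependent constants by $C_{A,2}^{|\bsnu|}$ and of $\max\{1,2^{\ell-1}\}$ by $\max\{1,2^{|\bsnu|-1}\}$, Vandermonde reduction to a scalar sum, absorption of the binomial coefficient into a $\beta$-th power, superadditivity, and the Gosper telescoping identity. The paper compresses the combinatorial estimate into an ``as above'' reference to the $\det J$ lemma, but the underlying computation is identical to yours.
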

\begin{proof}
By the Leibniz product rule, we obtain that
\begin{align*}
&\|\partial_{\bsy}^{\bsnu}\big(B(\cdot,\bsy)^{-1}\det J(\cdot,\bsy)\big)\|_{L^\infty}\\
&\leq \sum_{\boldsymbol m\leq \bsnu}\binom{\bsnu}{\boldsymbol m}\|\partial_{\bsy}^{\boldsymbol m}\det J(\cdot,\bsy)\|_{L^\infty}\|\partial_{\bsy}^{\bsnu-\boldsymbol m}B(\cdot,\bsy)^{-1}\|_{L^\infty}\\
&\leq \frac{\sigma_{\max}^d}{((d^2-1)!)^\beta}\sum_{\boldsymbol m\leq \bsnu}\binom{\bsnu}{\boldsymbol m}C_{\det J}^{|\boldsymbol m|}((|\boldsymbol m|+d^2-1)!)^\beta\boldsymbol b^{\boldsymbol m}\\
&\quad\quad\quad \times\sigma_{\min}^{-2|\bsnu|+2|\boldsymbol m|-2}C^{2|\bsnu|-2|\boldsymbol m|}((|\bsnu|-|\boldsymbol m|)!)^\beta4^{\beta|\bsnu|-\beta|\boldsymbol m|}\\
&\quad\quad\quad\times\max\{1,2^{|\bsnu|-|\boldsymbol m|-1}\}\boldsymbol b^{\bsnu-\boldsymbol m}\\
&\leq \frac{\sigma_{\max}^d}{((d^2-1)!)^\beta}C_{\det J}^{|\bsnu|}\sigma_{\min}^{-2|\bsnu|-2}\max\{1,C^2\}^{|\bsnu|}4^{\beta|\bsnu|}\max\{1,2^{|\bsnu|-1}\}\boldsymbol b^{\bsnu}\\
&\quad\quad\quad \times\underset{\leq(|\bsnu|!)^\beta\frac{(|\bsnu|+1)^\beta\cdot ((|\bsnu|+d^2)!)^\beta}{d^{2\beta}\cdot ((|\bsnu|+1)!)^\beta}~\text{as above}}{\underbrace{\sum_{\boldsymbol m\leq \bsnu}\binom{\bsnu}{\boldsymbol m}((|\boldsymbol m|+d^2-1)!)^\beta((|\bsnu|-|\boldsymbol m|)!)^\beta}}\\
&=\frac{\sigma_{\max}^d}{(d^2!)^\beta}C_{\det J}^{|\bsnu|}\sigma_{\min}^{-2|\bsnu|-2}\max\{1,C^2\}^{|\bsnu|}4^{\beta|\bsnu|}\\
&\quad\times\max\{1,2^{|\bsnu|-1}\}((|\bsnu|+d^2)!)^\beta\boldsymbol b^{\bsnu},
\end{align*}
as desired.
\end{proof}

\subsection{Regularity of the right-hand side}\label{sec:rhsreg}
\paragraph{Step 5.} We also need to control the parametric regularity of the pullback source term. We have the following result.
\begin{lemma}\label{8} Under Assumptions~\ref{a1}\,--\,\ref{a4}, there holds
$$
\|\partial_{\bsy}^{\bsnu}f(\boldsymbol V(\cdot,\bsy))\|_{L^\infty(D_{\rm ref})}\leq 2^{\beta |\bsnu|-\beta }\max\{1,\|\boldsymbol\rho\|_{\ell^{1/\beta}}^{|\bsnu|}\}C_fC^{|\bsnu|}\boldsymbol b^{\bsnu}(|\bsnu|!)^\beta
$$
for all $\bsnu\in\mathscr F\setminus\{\mathbf 0\}$ and $\bsy\in U$.
\end{lemma}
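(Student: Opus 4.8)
The plan is to obtain the bound by applying the multivariate Fa\`a di Bruno formula to the composition $f(\boldsymbol V(\cdot,\bsy))$, differentiating in the parameter $\bsy$, and then inserting the two Gevrey bounds of Assumptions~\ref{assump:gevrey} and~\ref{a4}: one controls the parametric derivatives of the components of $\boldsymbol V$, the other the spatial derivatives of $f$. Concretely, I would first write, in the Constantine--Savits form of the formula,
\[
\partial_{\bsy}^{\bsnu}\big(f(\boldsymbol V(\cdot,\bsy))\big)
=\sum_{1\le|\bslambda|\le|\bsnu|}\big(\partial_{\bsx}^{\bslambda}f\big)(\boldsymbol V(\cdot,\bsy))
\sum_{(\boldsymbol\ell_j,\boldsymbol k_j)}\bsnu!\prod_{j}\frac{\big(\partial_{\bsy}^{\boldsymbol\ell_j}\boldsymbol V\big)^{\boldsymbol k_j}}{\boldsymbol k_j!\,(\boldsymbol\ell_j!)^{|\boldsymbol k_j|}},
\]
where the inner sum runs over finite families of distinct nonzero parameter multi-indices $\boldsymbol\ell_j\in\mathscr F$ with coordinate multiplicities $\boldsymbol k_j\in\mathbb N_0^d$ subject to $\sum_j\boldsymbol k_j=\bslambda$ and $\sum_j|\boldsymbol k_j|\boldsymbol\ell_j=\bsnu$, and $(\partial_{\bsy}^{\boldsymbol\ell_j}\boldsymbol V)^{\boldsymbol k_j}:=\prod_{i=1}^d(\partial_{\bsy}^{\boldsymbol\ell_j}V_i)^{(\boldsymbol k_j)_i}$.

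Next I would estimate each factor. By Assumption~\ref{a4}, $|(\partial_{\bsx}^{\bslambda}f)(\boldsymbol V(\cdot,\bsy))|\le C_f(|\bslambda|!)^\beta\bsrho^{\bslambda}$, and by Assumption~\ref{assump:gevrey}, $\|\partial_{\bsy}^{\boldsymbol\ell_j}V_i\|_{L^\infty(D_{\rm ref})}\le C(|\boldsymbol\ell_j|!)^\beta\boldsymbol b^{\boldsymbol\ell_j}$. The two bookkeeping identities $\sum_j\boldsymbol k_j=\bslambda$ and $\sum_j|\boldsymbol k_j|\boldsymbol\ell_j=\bsnu$ are exactly what make the product of these bounds collapse: the $\boldsymbol b$-factors assemble into $\boldsymbol b^{\bsnu}$ \emph{exactly}, the $C$-factors into $C^{|\bslambda|}\le C^{|\bsnu|}$ (using $C\ge1$), and the $\bsrho$-factors into $\bsrho^{\bslambda}$. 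After pulling these outside, what remains is a purely combinatorial sum of factorial weights of the form $\bsnu!\,(|\bslambda|!)^\beta\prod_j(|\boldsymbol\ell_j|!)^{\beta|\boldsymbol k_j|}\big/\prod_j\boldsymbol k_j!(\boldsymbol\ell_j!)^{|\boldsymbol k_j|}$, summed over all admissible families $(\boldsymbol\ell_j,\boldsymbol k_j)$ and over $\bslambda$ against $\bsrho^{\bslambda}$.

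The main obstacle is controlling this combinatorial sum and extracting the advertised constants, and the key device is the super-additivity inequality $\sum_k c_k\le(\sum_k c_k^{1/\beta})^\beta$ (valid for $c_k\ge0$, $\beta\ge1$), the same tool already used in the determinant estimate above. Applying it with $\lambda=1/\beta$ lets me pull the exponent $\beta$ out of the sum; this is precisely the step that turns $\bsrho$-weighted geometric sums $\sum_{i=1}^d\rho_i$ into $(\sum_{i=1}^d\rho_i^{1/\beta})^\beta=\|\bsrho\|_{\ell^{1/\beta}}$ and produces the prefactor $2^{\beta(|\bsnu|-1)}$ from its analytic ($\beta=1$) counterpart $2^{|\bsnu|-1}$. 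Inside, the factorial weights are handled by the super-multiplicativity estimate $\prod_j a_j!\le(\sum_j a_j)!$ together with the Vandermonde/multinomial collapse used in Step~1, which drives $(|\bslambda|!\prod_j(|\boldsymbol\ell_j|!)^{|\boldsymbol k_j|})^\beta$ down to $(|\bsnu|!)^\beta$; the residual geometric summation over the at most $|\bsnu|$ blocks, each contributing a factor $\sum_{i=1}^d\rho_i^{1/\beta}$, yields the power $\max\{1,\|\bsrho\|_{\ell^{1/\beta}}^{|\bsnu|}\}$ (the maximum absorbing both $\|\bsrho\|_{\ell^{1/\beta}}\gtrless1$). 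I would finally check the base case $|\bsnu|=1$, where the chain rule $\partial_{y_k}f(\boldsymbol V)=\sum_{i=1}^d(\partial_{x_i}f)(\boldsymbol V)\,\partial_{y_k}V_i$ gives the trivial constant $2^{0}$ and matches the stated bound, and take the essential supremum over $\bsx\in D_{\rm ref}$ (uniformly in $\bsy\in U$) to pass from the pointwise estimate to the $L^\infty(D_{\rm ref})$ norm.
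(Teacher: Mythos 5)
Your overall strategy coincides with the paper's: both proofs apply the multivariate Fa\`a di Bruno formula to $\partial_{\bsy}^{\bsnu}(f\circ\boldsymbol V)$, insert the Gevrey bounds of Assumptions~\ref{assump:gevrey} and~\ref{a4}, and use the super-additivity inequality $\sum_k c_k\le(\sum_k c_k^{1/\beta})^{\beta}$ together with the multinomial theorem to produce the factors $\max\{1,\|\bsrho\|_{\ell^{1/\beta}}^{|\bsnu|}\}$ and $2^{\beta|\bsnu|-\beta}$. Your bookkeeping of the $\boldsymbol b$-, $C$- and $\bsrho$-factors is correct. The difference in execution is that the paper works with the \emph{recursive} (Savits) form of the Fa\`a di Bruno coefficients $\alpha_{\bsnu,\bslambda}$ and delegates their estimation to \cite[Theorem~5.1]{ks24}, whereas you propose to bound the explicit partition sum directly.

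That is exactly where your argument has a gap: the entire difficulty of the lemma sits in the combinatorial estimate that you compress into ``super-multiplicativity together with the Vandermonde/multinomial collapse drives $(|\bslambda|!\prod_j(|\boldsymbol\ell_j|!)^{|\boldsymbol k_j|})^{\beta}$ down to $(|\bsnu|!)^{\beta}$.'' As stated, this is not what happens. A direct use of $\prod_j a_j!\le(\sum_j a_j)!$ gives $\prod_j(|\boldsymbol\ell_j|!)^{|\boldsymbol k_j|}\le|\bsnu|!$ and $|\bslambda|!\le|\bsnu|!$ separately, overshooting by a full factor $(|\bsnu|!)^{\beta}$ that must be absorbed by the denominators $\boldsymbol k_j!\,(\boldsymbol\ell_j!)^{|\boldsymbol k_j|}$ and by the multiplicity of admissible families; none of this bookkeeping is carried out. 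More importantly, the correct fixed-$\bslambda$ bound is \emph{not} $(|\bsnu|!)^{\beta}$: the paper's intermediate estimate reads
$$
\|\alpha_{\bsnu,\bslambda}(\cdot,\bsy)\|_{L^\infty}\le C^{|\bsnu|}\frac{(|\bsnu|!)^{\beta}\,((|\bsnu|-1)!)^{\beta}}{(\bslambda!)^{\beta}((|\bsnu|-|\bslambda|)!)^{\beta}((|\bslambda|-1)!)^{\beta}}\,\boldsymbol b^{\bsnu},
$$
i.e.\ for $|\bslambda|=\ell$ it carries the extra factor $\binom{|\bsnu|-1}{\ell-1}^{\beta}$ beyond $(|\bsnu|!)^{\beta}/(\bslambda!)^{\beta}$, and this surplus is cancelled only after summing over $\ell$ via $\sum_{\ell=1}^{v}\frac{1}{(v-\ell)!(\ell-1)!}=\frac{2^{v-1}}{(v-1)!}$ — which is also precisely the source of the prefactor $2^{\beta|\bsnu|-\beta}$. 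So the cancellation is an interplay between the $\bslambda$-dependence of the coefficient bound and the final sum over $\bslambda$, not a per-$\bslambda$ collapse followed by a ``residual geometric summation.'' To make your route rigorous you would need to prove the partition-sum analogue of the displayed bound from scratch (a nontrivial induction), or, as the paper does, pass to the recursive coefficients and invoke \cite[Theorem~5.1]{ks24}.
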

\begin{proof} By Fa\`a di Bruno's formula~\citep{savits}, there holds
$$
\partial_{\bsy}^{\bsnu}f(\boldsymbol V(\bsx,\bsy))=\sum_{\substack{\boldsymbol\lambda\in\mathbb N_0^d\\ 1\leq |\boldsymbol\lambda|\leq |\bsnu|}}\partial_{\bsx}^{\boldsymbol\lambda}f(\bsx)\bigg|_{\bsx=\boldsymbol V(\bsx,\bsy)}\alpha_{\bsnu,\boldsymbol\lambda}(\bsx,\bsy)\quad\text{for all}~\bsnu\in\mathscr F\setminus\{\mathbf 0\},
$$
where the sequence $(\alpha_{\bsnu,\boldsymbol\lambda}(\bsx,\bsy))_{\bsnu\in\mathscr F,\boldsymbol\lambda\in\mathbb Z^d}$ is defined recursively by setting $\alpha_{\bsnu,\mathbf 0}\equiv \delta_{\bsnu,\mathbf 0}$ for all $\bsnu\in\mathscr F$, $\alpha_{\bsnu,\boldsymbol\lambda}\equiv 0$ if $|\bsnu|<|\boldsymbol\lambda|$ or $\boldsymbol\lambda\not\geq \mathbf 0$, and
$$
\alpha_{\bsnu+\boldsymbol e_j,\boldsymbol\lambda}(\bsx,\bsy)=\sum_{\ell\in{\rm supp}(\boldsymbol\lambda)}\sum_{0\leq |\boldsymbol m|\leq|\bsnu|}\binom{\bsnu}{\boldsymbol m}\partial_{\bsy}^{\boldsymbol m+\boldsymbol e_j}[\boldsymbol V(\bsx,\bsy)]_{\ell}\alpha_{\bsnu-\boldsymbol m,\boldsymbol\lambda-\boldsymbol e_{\ell}}(\bsx,\bsy)
$$
otherwise, where $\boldsymbol e_j\in\mathscr F$ denotes the multi-index whose $j^{\rm th}$ component is 1 and other entries are 0.

By Assumption~\ref{assump:gevrey}, we especially have that
\begin{align*}
&\|\alpha_{\bsnu+\boldsymbol e_j,\boldsymbol\lambda}(\cdot,\bsy)\|_{L^\infty}\\
&\quad\leq C\sum_{\ell\in{\rm supp}(\boldsymbol\lambda)}\sum_{0\leq |\boldsymbol m|\leq|\bsnu|}\binom{\bsnu}{\boldsymbol m}((|\boldsymbol m|+1)!)^\beta\boldsymbol b^{\boldsymbol m+\boldsymbol e_j}\|\alpha_{\bsnu-\boldsymbol m,\boldsymbol\lambda-\boldsymbol e_{\ell}}(\cdot,\bsy)\|_{L^\infty}.
\end{align*}
As a consequence of~\citet[Theorem~5.1]{ks24}, the sequence $\|\alpha_{\bsnu,\boldsymbol\lambda}(\cdot,\bsy)\|_{L^\infty}$ can be bounded by
$$
\|\alpha_{\bsnu,\boldsymbol\lambda}(\cdot,\bsy)\|_{L^\infty}\leq C^{|\bsnu|}\frac{(|\bsnu|!)^\beta ((|\bsnu|-1)!)^\beta}{(\boldsymbol\lambda!)^\beta((|\bsnu|-|\boldsymbol\lambda|)!)^\beta((|\boldsymbol\lambda|-1)!)^\beta}\boldsymbol b^{\bsnu}
$$
and, by employing Assumption~\ref{a4}, we obtain
\begin{align*}
&\|\partial_{\bsy}^{\bsnu}f(\boldsymbol V(\cdot,\bsy))\|_{L^\infty}\\
&\leq C_fC^{|\bsnu|}\boldsymbol b^{\bsnu}(|\bsnu|!)^\beta((|\bsnu|-1)!)^\beta\sum_{\substack{\boldsymbol\lambda\in\mathbb N_0^d\\ 1\leq|\boldsymbol\lambda|\leq|\bsnu|}}\frac{(|\boldsymbol\lambda|!)^\beta}{(\boldsymbol\lambda!)^\beta ((|\bsnu|-|\boldsymbol\lambda|)!)^\beta((|\boldsymbol\lambda|-1)!)^\beta}\boldsymbol\rho^{\boldsymbol\lambda}\\
&=C_fC^{|\bsnu|}\boldsymbol b^{\bsnu}(|\bsnu|!)^\beta((|\bsnu|-1)!)^\beta\sum_{\ell=1}^{|\bsnu|}\frac{(\ell!)^\beta}{((|\bsnu|-\ell)!)^\beta((\ell-1)!)^\beta}\sum_{\substack{\boldsymbol\lambda\in\mathbb N_0^d\\|\boldsymbol\lambda|=\ell}}\frac{1}{(\boldsymbol\lambda!)^\beta}\boldsymbol\rho^{\boldsymbol\lambda}\\
&\leq C_fC^{|\bsnu|}\boldsymbol b^{\bsnu}(|\bsnu|!)^\beta((|\bsnu|-1)!)^\beta\sum_{\ell=1}^{|\bsnu|}\frac{1}{((|\bsnu|-\ell)!)^\beta((\ell-1)!)^\beta}\bigg(\sum_{\substack{\boldsymbol\lambda\in\mathbb N_0^d\\|\boldsymbol\lambda|=\ell}}\frac{\ell!}{(\boldsymbol\lambda!)}\boldsymbol\rho^{\boldsymbol\lambda/\beta}\bigg)^\beta\\
&= C_fC^{|\bsnu|}\boldsymbol b^{\bsnu}(|\bsnu|!)^\beta((|\bsnu|-1)!)^\beta\sum_{\ell=1}^{|\bsnu|}\frac{1}{((|\bsnu|-\ell)!)^\beta((\ell-1)!)^\beta}\bigg(\sum_{j=1}^d \rho_j^{1/\beta}\bigg)^{\beta\ell}\\
&\leq \max\{1,\|\boldsymbol\rho\|_{\ell^{1/\beta}}^{|\bsnu|}\}C_fC^{|\bsnu|}\boldsymbol b^{\bsnu}(|\bsnu|!)^\beta ((|\bsnu|-1)!)^\beta \sum_{\ell=1}^{|\bsnu|}\frac{1}{((|\bsnu|-\ell)!)^\beta ((\ell-1)!)^\beta}\\
&= 2^{\beta |\bsnu|-\beta}\max\{1,\|\boldsymbol\rho\|_{\ell^{1/\beta}}^{|\bsnu|}\}C_fC^{|\bsnu|}\boldsymbol b^{\bsnu}(|\bsnu|!)^\beta,
\end{align*}
where we used the multinomial theorem as well as the following bound (cf., e.g., \citet[Lemma~A.1]{ks24})
$$
\sum_{\ell=1}^v \frac{1}{((v-\ell)!)^\beta ((\ell-1)!)^\beta}\leq \bigg(\sum_{\ell=1}^v \frac{1}{(v-\ell)! (\ell-1)!}\bigg)^\beta=\frac{2^{\beta \ell-\beta }}{((v-1)!)^\beta}.
$$This proves the assertion.\end{proof}

\begin{lemma}Let $\bsnu\in\mathscr F\setminus\{\mathbf 0\}$ and $\bsy\in U$. Under Assumptions~\ref{a1}\,--\,\ref{a4}, there holds
$$
\|\partial_{\bsy}^{\bsnu}f_{\rm ref}(\cdot,\bsy)\|_{L^\infty(D_{\rm ref})}\leq C_{f_{\rm ref},1}C_{f_{\rm ref},2}^{|\bsnu|}\max\{1,\|\boldsymbol\rho\|_{\ell^{1/\beta}}^{|\bsnu|}\}\frac{((|\bsnu|+d^2)!)^\beta}{(d^2!)^\beta}\boldsymbol b^{\bsnu},
$$
where
$$
C_{f_{\rm ref},1}=\sigma_{\max}^d 2^{-\beta}C_f\quad\text{and}\quad C_{f_{\rm ref},2}=\frac{2^\beta C^2}{\sigma_{\min}}2^{\beta}.
$$
\end{lemma}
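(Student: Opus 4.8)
The plan is to write $f_{\rm ref}(\bsx,\bsy)=f(\boldsymbol V(\bsx,\bsy))\det J(\bsx,\bsy)$ as a product of two factors whose parametric regularity has already been controlled, and to combine them via the Leibniz product rule. Concretely, for $\bsnu\in\mathscr F\setminus\{\mathbf 0\}$ I would expand
$$
\partial_{\bsy}^{\bsnu}f_{\rm ref}(\bsx,\bsy)=\sum_{\boldsymbol m\leq\bsnu}\binom{\bsnu}{\boldsymbol m}\partial_{\bsy}^{\boldsymbol m}f(\boldsymbol V(\bsx,\bsy))\,\partial_{\bsy}^{\bsnu-\boldsymbol m}\det J(\bsx,\bsy),
$$
take $L^\infty(D_{\rm ref})$ norms, and apply the triangle inequality term by term.

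For the factor $\partial_{\bsy}^{\boldsymbol m}f(\boldsymbol V(\cdot,\bsy))$ I would invoke Lemma~\ref{8} whenever $\boldsymbol m\neq\mathbf 0$, and for $\partial_{\bsy}^{\bsnu-\boldsymbol m}\det J(\cdot,\bsy)$ I would use the $\det J$ estimate established in Step~3. The single term $\boldsymbol m=\mathbf 0$ lies outside the scope of Lemma~\ref{8}; there I would instead use the crude bound $\|f(\boldsymbol V(\cdot,\bsy))\|_{L^\infty}\leq C_f$, which follows from Assumption~\ref{a4} with $\bsnu=\mathbf 0$ together with the fact that $\boldsymbol V(\cdot,\bsy)$ maps $D_{\rm ref}$ into the hold-all domain $\mathcal D$.

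Having inserted both bounds, I would factor out of the sum everything independent of $\boldsymbol m$, namely $\sigma_{\max}^d$, $C_f$, the prefactor $((d^2-1)!)^{-\beta}$, the overall $\boldsymbol b^{\bsnu}$, and $\max\{1,\|\boldsymbol\rho\|_{\ell^{1/\beta}}^{|\bsnu|}\}$ (using $|\boldsymbol m|\leq|\bsnu|$), and then bound the two competing geometric rates crudely by $(2^\beta C)^{|\boldsymbol m|}C_{\det J}^{|\bsnu|-|\boldsymbol m|}\leq(2^\beta C\cdot C_{\det J})^{|\bsnu|}=C_{f_{\rm ref},2}^{|\bsnu|}$, using $2^\beta C\geq1$ and $C_{\det J}\geq1$. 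This is where the constant $C_{f_{\rm ref},2}=\tfrac{2^\beta C^2}{\sigma_{\min}}2^\beta$ arises, and where the slack needed to absorb the exceptional $\boldsymbol m=\mathbf 0$ term is created.

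The remaining combinatorial sum is $\sum_{\boldsymbol m\leq\bsnu}\binom{\bsnu}{\boldsymbol m}(|\boldsymbol m|!)^\beta((|\bsnu|-|\boldsymbol m|+d^2-1)!)^\beta$, which, after the reflection $\boldsymbol m\mapsto\bsnu-\boldsymbol m$, coincides \emph{exactly} with the sum already estimated in the proof of Lemma~\ref{lemma:productreg} through the Vandermonde convolution and the Gosper-summable identity; reusing that estimate bounds it by $((|\bsnu|+d^2)!)^\beta/d^{2\beta}$. Merging the prefactor $((d^2-1)!)^{-\beta}$ with $d^{-2\beta}$ into $(d^2!)^{-\beta}$, and collecting $2^{-\beta}C_f\sigma_{\max}^d$ into $C_{f_{\rm ref},1}$, then yields the stated bound. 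I expect the only genuine subtlety to lie in the bookkeeping of the $\boldsymbol m=\mathbf 0$ term and in matching the constant $C_{f_{\rm ref},1}=\sigma_{\max}^d2^{-\beta}C_f$: one must check that the factor $2^{-\beta}$, natural for the terms with $\boldsymbol m\neq\mathbf 0$ but not for $\boldsymbol m=\mathbf 0$, remains consistent once the geometric slack $(2^\beta C)^{|\bsnu|}\geq2^\beta$, available because $|\bsnu|\geq1$, is taken into account.
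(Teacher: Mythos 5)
Your proposal is correct and follows exactly the paper's route: the paper's proof is a one-liner applying the Leibniz product rule to $f(\boldsymbol V(\cdot,\bsy))\det J(\cdot,\bsy)$ and then invoking "the same steps as the proof of Lemma~\ref{lemma:productreg}" for the resulting combinatorial sum. Your treatment is in fact slightly more careful than the paper's, since you explicitly handle the $\boldsymbol m=\mathbf 0$ term (which lies outside the scope of Lemma~\ref{8}) and verify that the slack $(2^\beta C)^{|\bsnu|}\geq 2^\beta$ absorbs the missing factor $2^{-\beta}$ there.
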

\begin{proof}
The claim follows by an application of the Leibniz product rule:
\begin{align*}
\|\partial_{\bsy}^{\bsnu}f_{\rm ref}(\cdot,\bsy)\|_{L^\infty}\leq \sum_{\boldsymbol m\leq \bsnu}\binom{\bsnu}{\boldsymbol m}\|\partial_{\bsy}^{\boldsymbol m}\det J(\cdot,\bsy)\|_{L^\infty}\|\partial_{\bsy}^{\bsnu-\boldsymbol m}f(\boldsymbol V(\cdot,\bsy))\|_{L^\infty},
\end{align*}
where the result follows using the same steps as the proof of Lemma~\ref{lemma:productreg}.
\end{proof}

\subsection{Parametric regularity of the solution to the pullback Poisson equation}
We have the following.
\begin{theorem}\label{RegBoundPDEsolStat}
Let $\bsnu\in\mathscr F$ and $\bsy\in U$. Under Assumptions~\ref{a1}\,--\,\ref{a4}, the solution to~\eqref{eq:weakform} satisfies
\begin{align*}
\|\partial_{\bsy}^{\bsnu}\widehat u(\cdot,\bsy)\|_{H_0^1(D_{\rm ref})}\leq C_{\widehat u,1}C_{\widehat u,2}^{|\bsnu|}(|\bsnu|!)^{\beta}\boldsymbol b^{\bsnu}
\end{align*}
where
\begin{align*}
&C_{\widehat u,1}=1+\frac{\sigma_{\max}^2}{\sigma_{\min}^d}|D_{\rm ref}|^{1/2}C_{D_{\rm ref}}C_{f_{\rm ref},1},\\
&C_{\widehat u,2}=\max\{c_0,c_1,\widetilde c_1,\widetilde c_2\}^2 ((d^2)!)^{\beta}2^{\beta(d^2+1)+1},
\end{align*}
with \begin{align*}
c_0=\frac{C_{A,1}}{\sigma_{\min}^d((d^2)!)^\beta},\quad c_1=2C_{A,2},\quad &\widetilde c_0 = \frac{|D_{\rm ref}|^{1/2}C_{D_{\rm ref}}C_{f_{\rm ref},1}}{\sigma_{\min}^d ((d^2)!)^\beta},\quad\text{and}\\
&\widetilde c_1=C_{f_{\rm ref},2}\max\{1,\|\boldsymbol\rho\|_{\ell^{1/\beta}}\}.
\end{align*}
\end{theorem}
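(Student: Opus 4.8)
The plan is to differentiate the pullback weak formulation~\eqref{eq:weakform} with respect to $\bsy$, isolate the top-order term, and convert the resulting identity into a recursive inequality for $D_{\bsnu}:=\|\partial_{\bsy}^{\bsnu}\widehat u(\cdot,\bsy)\|_{H_0^1(D_{\rm ref})}$ that I close by induction on $|\bsnu|$. Since the test functions are independent of $\bsy$, applying $\partial_{\bsy}^{\bsnu}$ to~\eqref{eq:weakform} and using the Leibniz product rule gives
\begin{align*}
\sum_{\boldsymbol m\leq\bsnu}\binom{\bsnu}{\boldsymbol m}\int_{D_{\rm ref}}(\partial_{\bsy}^{\boldsymbol m}A(\bsx,\bsy)\nabla\partial_{\bsy}^{\bsnu-\boldsymbol m}\widehat u(\bsx,\bsy))\cdot\nabla\widehat v(\bsx)\,{\rm d}\bsx=\int_{D_{\rm ref}}\partial_{\bsy}^{\bsnu}f_{\rm ref}(\bsx,\bsy)\widehat v(\bsx)\,{\rm d}\bsx.
\end{align*}
Separating the $\boldsymbol m=\boldsymbol 0$ term and choosing $\widehat v=\partial_{\bsy}^{\bsnu}\widehat u(\cdot,\bsy)$, I would invoke the uniform coercivity of $A(\cdot,\bsy)$: by Assumption~\ref{a2sv} the eigenvalues of $A=(J^{\rm T}J)^{-1}\det J$ are $\det J/\sigma_i^2\geq\sigma_{\min}^d/\sigma_{\max}^2$, so the lower bound on the quadratic form is $\sigma_{\min}^d/\sigma_{\max}^2$. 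Combining this with the Poincar\'e inequality (constant $C_{D_{\rm ref}}$) and $\|\cdot\|_{L^2}\leq|D_{\rm ref}|^{1/2}\|\cdot\|_{L^\infty}$, then dividing through by $D_{\bsnu}$, yields the recursion
\begin{align*}
D_{\bsnu}\leq\frac{\sigma_{\max}^2}{\sigma_{\min}^d}\Big(|D_{\rm ref}|^{1/2}C_{D_{\rm ref}}\|\partial_{\bsy}^{\bsnu}f_{\rm ref}(\cdot,\bsy)\|_{L^\infty}+\sum_{\boldsymbol 0\neq\boldsymbol m\leq\bsnu}\binom{\bsnu}{\boldsymbol m}\|\partial_{\bsy}^{\boldsymbol m}A(\cdot,\bsy)\|_{L^\infty}D_{\bsnu-\boldsymbol m}\Big).
\end{align*}

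Next I would substitute the explicit coefficient bound from Lemma~\ref{lemma:productreg} and the right-hand side bound for $\partial_{\bsy}^{\bsnu}f_{\rm ref}$ established just above, turning the recursion into an inequality whose inhomogeneous part scales like $\max\{1,\|\boldsymbol\rho\|_{\ell^{1/\beta}}^{|\bsnu|}\}((|\bsnu|+d^2)!)^\beta\boldsymbol b^{\bsnu}/(d^2!)^\beta$ and whose recursive kernel carries $\max\{1,2^{|\boldsymbol m|-1}\}((|\boldsymbol m|+d^2)!)^\beta\boldsymbol b^{\boldsymbol m}/(d^2!)^\beta$. The claim is then proved by strong induction on $|\bsnu|$. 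The base case $\bsnu=\boldsymbol 0$ is checked directly from $\|f_{\rm ref}\|_{L^\infty}\leq\sigma_{\max}^d C_f$, the leading $1$ in $C_{\widehat u,1}$ providing the slack needed to dominate $\|\widehat u\|_{H_0^1}$. For the inductive step I would insert the hypothesis $D_{\bsnu-\boldsymbol m}\leq C_{\widehat u,1}C_{\widehat u,2}^{|\bsnu-\boldsymbol m|}(|\bsnu-\boldsymbol m|!)^\beta\boldsymbol b^{\bsnu-\boldsymbol m}$, whereupon $\boldsymbol b^{\boldsymbol m}\boldsymbol b^{\bsnu-\boldsymbol m}=\boldsymbol b^{\bsnu}$ factors out and the multi-index sums collapse to one-dimensional sums over $\ell=|\boldsymbol m|$ via the Vandermonde convolution $\sum_{\boldsymbol m\leq\bsnu,\,|\boldsymbol m|=\ell}\binom{\bsnu}{\boldsymbol m}=\binom{|\bsnu|}{\ell}$, exactly as in Lemma~\ref{step1lemma}.

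The main obstacle, and the place where the precise form of $C_{\widehat u,2}$ is forced, is controlling the resulting factorial sums so that the $((|\bsnu|+d^2)!)^\beta$ growth inherited from the coefficient and source bounds collapses back to the clean target factor $(|\bsnu|!)^\beta$. I would handle this with the device already used in Step~4: rewrite $((\ell+d^2)!)^\beta$ relative to $(\ell!)^\beta$, apply the superadditivity inequality $\sum_k a_k\leq(\sum_k a_k^\lambda)^{1/\lambda}$ for $\lambda\in(0,1]$ to pull the exponent $\beta$ outside the sum, and then use the binomial estimate $\ell!(|\bsnu|-\ell)!\leq|\bsnu|!$ together with a Gosper-type telescoping identity to evaluate the remaining factorial sum in closed form. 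The geometric factors $4^\beta$, $2^\beta$ and $\max\{1,2^{|\bsnu|-1}\}$ accumulated along the way are absorbed into $C_{\widehat u,2}$ through the constants $c_0,c_1,\widetilde c_1$ and the prefactor $((d^2)!)^\beta 2^{\beta(d^2+1)+1}$; verifying that these choices are consistent with the stated $C_{\widehat u,1},C_{\widehat u,2}$ is the only genuinely delicate bookkeeping, and I expect it to mirror the appendix recursion lemmas (e.g.\ Lemma~\ref{lemma:xi}) rather than to require any new idea.
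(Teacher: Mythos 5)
Your proposal is correct and follows essentially the same route as the paper: differentiate the weak formulation, test with $\widehat v=\partial_{\bsy}^{\bsnu}\widehat u$, use coercivity of $A$ (your explicit constant $\sigma_{\max}^2/\sigma_{\min}^d$ is in fact the careful version of what the paper writes) to obtain the same recursion in $\|\partial_{\bsy}^{\bsnu}\widehat u\|_{H_0^1}$, then close it by induction using the Vandermonde collapse, the superadditivity inequality, and the Gosper-type telescoping. The only cosmetic difference is that the paper packages the induction into the appendix recursion lemmas (Lemma~\ref{superlemma} with the $\widetilde\tau$/$\tau$ sequences and Lemma~\ref{lemma:taubound}) rather than carrying it out inline as you propose, which you yourself anticipate.
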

\begin{proof}
By differentiating the weak formulation~\eqref{eq:weakform} on both sides with respect to $\bsy\in U$, we obtain
\begin{align*}
&\sum_{\boldsymbol m\leq\bsnu}\binom{\bsnu}{\boldsymbol m}\int_{D_{\rm ref}} \partial_{\bsy}^{\boldsymbol m}A(\bsx,\bsy)\nabla \partial_{\bsy}^{\bsnu-\boldsymbol m}\widehat u(\bsx,\bsy)\cdot \nabla \widehat v(\bsx)\,{\rm d}\bsx\\
&\quad=\int_{D_{\rm ref}}\partial_{\bsy}^{\bsnu}f_{\rm ref}(\cdot,\bsy)\widehat v(\bsx)\,{\rm d}\bsx.
\end{align*}
By using standard estimates and testing again with $\widehat v=\partial_{\bsy}^{\bsnu}\widehat u(\cdot,\bsy)\in H_0^1(D_{\rm ref})$, we obtain the recurrence relation
\begin{align*}
\|\partial_{\bsy}^{\bsnu}\widehat u(\cdot,\bsy)\|_{H_0^1}\leq &\frac{1}{\sigma_{\min}^d}\sum_{\substack{\boldsymbol m\leq\bsnu\\ \boldsymbol m\neq\mathbf 0}}\binom{\bsnu}{\boldsymbol m}\|\partial_{\bsy}^{\boldsymbol m}A(\cdot,\bsy)\|_{L^\infty}\| \partial_{\bsy}^{\bsnu-\boldsymbol m}\widehat u(\cdot,\bsy)\|_{H_0^1}\\
&+\frac{|D_{\rm ref}|^{1/2}C_{D_{\rm ref}}}{\sigma_{\min}^d}\|\partial_{\bsy}^{\bsnu}f_{\rm ref}\|_{L^\infty},
\end{align*}
where $|D_{\rm ref}|=\int_{D_{\rm ref}}\,{\rm d}\bsx$ and $C_{D_{\rm ref}}>0$ denotes the Poincar\'e constant of domain $D_{\rm ref}$.

We write the parametric regularity bound for $A(\bsx,\bsy)$ as
$$
\|\partial_{\bsy}^{\bsnu}A(\cdot,\bsy)\|_{L^\infty}\leq \sigma_{\min}^dc_0c_1^{|\bsnu|}((|\bsnu|+d^2)!)^{\beta}\boldsymbol b^{\bsnu},
$$
where $c_0=\frac{C_{A,1}}{\sigma_{\min}^d((d^2)!)^\beta}$ and $c_1=2C_{A,2}$, and the parametric regularity bound for $f_{\rm ref}(\bsx,\bsy)=f(\boldsymbol V(\bsx,\bsy))\det J(\bsx,\bsy)$ as
$$
\|\partial_{\bsy}^{\bsnu}f_{\rm ref}(\cdot,\bsy)\|_{L^\infty}\leq \frac{\sigma_{\min}^d \widetilde c_0}{|D_{\rm ref}|^{1/2}C_{D_{\rm ref}}}\widetilde c_1^{|\bsnu|}((|\bsnu|+d^2)!)^{\beta}\boldsymbol b^{\bsnu},
$$
where $\widetilde c_0 = \frac{|D_{\rm ref}|^{1/2}C_{D_{\rm ref}}C_{f_{\rm ref},1}}{\sigma_{\min}^d ((d^2)!)^\beta}$ and $\widetilde c_1=C_{f_{\rm ref},2}\max\{1,\|\boldsymbol\rho\|_{\ell^{1/\beta}}\}$. Then we obtain the recursive bound
\begin{align*}
\|\widehat u(\cdot,\bsy)\|_{H_0^1}&\leq C_0,\\
\|\partial_{\bsy}^{\bsnu}\widehat u(\cdot,\bsy)\|_{H_0^1}&\leq  C\sum_{\substack{\boldsymbol m\leq\bsnu\\ \boldsymbol m\neq\mathbf 0}}\binom{\bsnu}{\boldsymbol m}C^{|\boldsymbol m|}((|\boldsymbol m|+d^2)!)^{\beta}\boldsymbol b^{\boldsymbol m}\|\partial_{\bsy}^{\bsnu-\boldsymbol m}\widehat u(\cdot,\bsy)\|_{H_0^1}\\
&+ C C^{|\bsnu|}((|\bsnu|+d^2)!)^{\beta}\boldsymbol b^{\boldsymbol\nu},
\end{align*}
where $C_0=\frac{\sigma_{\max}^2}{\sigma_{\min}^d}|D_{\rm ref}|^{1/2}C_{D_{\rm ref}}C_{f_{\rm ref},1}$ and $C=\max\{c_0, c_1,\widetilde c_0,\widetilde c_1\}$.
By Lemma \ref{superlemma}, we obtain the inductive bound
\begin{align*}
&\|\partial_{\bsy}^{\bsnu}\widehat u(\cdot,\bsy)\|_{H_0^1}\leq (1+ C_0)C^{|\bsnu|}(|\bsnu|!)^{\beta}\widetilde \tau_{|\bsnu|,\beta,d^2}\boldsymbol b^{\bsnu},
\end{align*}
where $\widetilde\tau$ is defined by the recursion~\eqref{eq:tautildeseq}. Especially, we can bound
$$
\widetilde \tau_{\nu,\beta,d^2}\leq C^{\nu}\tau_{\nu,\beta,d^2}
$$
with $\tau$ defined by~\eqref{eq:tauseq}. The claim follows by an application of Lemma~\ref{lemma:taubound}.
\end{proof}
\subsection{Parametric regularity of the solution to the pullback heat equation}

In contrast to the pullback Poisson equation, the pullback heat equation contains several additional terms which need to be accounted for further complicating the analysis. Thus we approach the analysis step-by-step as follows:
\begin{addmargin}[3em]{0em}
\begin{enumerate}
\item[\em Step 1:] We begin by differentiating the variational formulation~\eqref{spacetimepullback} with respect to the parametric variable.
\item[\em Step 2:] In order to obtain bounds for the individual terms, we need to fix the test function satisfying certain conditions.
\item[\em Step 3:] Using steps 1 and 2, we derive a lower bound on the left-hand side of the differentiated variational formulation.
\item[\em Step 4:] We proceed to derive an upper bound for the right-hand side of the differentiated variational formulation.
\item[\em Step 5:] We shall additionally require a parametric regularity bound for the pullback initial condition $\widehat u_0$.
\item[\em Step 6:] Finally, we combine the bounds derived in steps 3--5 and derive an inductive bound for the solution to the pullback heat equation.
\end{enumerate}
\end{addmargin}
\paragraph{Step 1.} As shown in Lemma \ref{L2reg} we have that $\partial_{\bsy}^{\bsnu}\widehat u(\cdot,\cdot,\bsy)\in L^2(I;H_0^1(D_{\rm ref}))$ for all $\bsnu\in\mathscr F$. By differentiating the variational formulation~\eqref{spacetimepullback} on both sides with respect to $\bsy$ and applying the Leibniz product rule, we obtain
\begin{align*}
&\sum_{\boldsymbol m\leq\bsnu}\binom{\bsnu}{\boldsymbol m}\bigg[\int_I \int_{D_{\rm ref}}\tfrac{\partial}{\partial t}(\partial_\bsy^{\bsnu-\boldsymbol m}\widehat u(\cdot,t,\bsy))\widehat v_1(\cdot,t)\partial_\bsy^{\boldsymbol m}\det J(\cdot,\bsy)\,{\rm d}\bsx\,{\rm d}t\\
&\quad +\int_I\int_{D_{\rm ref}}\partial_\bsy^{\boldsymbol m}A(\bsx,\bsy)\nabla \partial_{\bsy}^{\bsnu-\boldsymbol m}\widehat u(\bsx,t,\bsy)\cdot \nabla \widehat v_1(\bsx,t)\,{\rm d}\bsx\,{\rm d}t\\
&\quad +\int_{D_{\rm ref}}\partial_\bsy^{\bsnu-\boldsymbol m}\widehat u(\bsx,0,\bsy)\widehat v_2(\bsx)\partial_{\bsy}^{\boldsymbol m}\det J(\bsx,\bsy)\,{\rm d}\bsx\bigg]\\
&=\int_I \int_{D_{\rm ref}}\partial_{\bsy}^{\bsnu}f_{\rm ref}(\cdot,t,\bsy)\widehat v_1\,{\rm d}\bsx\,{\rm d}t\\
&\quad + \sum_{\boldsymbol m\leq \bsnu}\binom{\bsnu}{\boldsymbol m}\int_{D_{\rm ref}}\partial_{\bsy}^{\bsnu-\boldsymbol m}\widehat u_0(\bsx,\bsy)\widehat v_2(\bsx)\partial_{\bsy}^{\boldsymbol m}\det J(\bsx,\bsy)\,{\rm d}\bsx,
\end{align*}
where $\widehat u_0(\bsx,\bsy)=u_0(\boldsymbol V(\bsx,\bsy),\bsy)$ for $\bsx\in D_{\rm ref}$, $\bsy\in U$. 
Separating out the $\boldsymbol m=\mathbf 0$ term, we obtain
\begin{align*}
&\int_I \int_{D_{\rm ref}} \tfrac{\partial}{\partial t}(\partial_\bsy^{\bsnu}\widehat u(\cdot,t,\bsy))\widehat v_1(\cdot,t)\det J(\cdot,\bsy)\,{\rm d}\bsx\,{\rm d}t\\
&\quad + \int_I \int_{D_{\rm ref}}A(\bsx,\bsy)\nabla \partial_\bsy^{\bsnu}\widehat u(\bsx,t,\bsy)\cdot \nabla \widehat v_1(\bsx,t)\,{\rm d}\bsx\,{\rm d}t\\
&\quad + \int_{D_{\rm ref}}\partial_{\bsy}^{\bsnu}\widehat u(\bsx,0,\bsy)\widehat v_2(\bsx)\det J(\bsx,\bsy)\,{\rm d}\bsx\\
&=-\sum_{\mathbf 0\neq\boldsymbol m\leq\bsnu}\binom{\bsnu}{\boldsymbol m}\bigg[\int_I \int_{D_{\rm ref}}\tfrac{\partial}{\partial t}\partial_\bsy^{\bsnu-\boldsymbol m}\widehat u(\cdot,t,\bsy)\widehat v_1(\cdot,t)\partial_\bsy^{\boldsymbol m}\det J(\cdot,\bsy)\,{\rm d}\bsx\,{\rm d}t\\
&\quad +\int_I \int_{D_{\rm ref}}\partial_\bsy^{\boldsymbol m}A(\bsx,\bsy)\nabla \partial_{\bsy}^{\bsnu-\boldsymbol m}\widehat u(\bsx,t,\bsy)\cdot \nabla \widehat v_1(\bsx,t)\,{\rm d}\bsx\,{\rm d}t\\
&\quad +\int_{D_{\rm ref}}\partial_\bsy^{\bsnu-\boldsymbol m}\widehat u(\bsx,0,\bsy)\widehat v_2(\bsx)\partial_{\bsy}^{\boldsymbol m}\det J(\bsx,\bsy)\,{\rm d}\bsx\bigg]\\
&\quad +\int_I\int_{D_{\rm ref}}  \partial_{\bsy}^{\bsnu}f_{\rm ref}(\cdot,t,\bsy)\widehat v_1\,{\rm d}\bsx\,{\rm d}t\\
&\quad+\sum_{\boldsymbol m\leq \bsnu}\binom{\bsnu}{\boldsymbol m}\int_{D_{\rm ref}}\partial_{\bsy}^{\bsnu-\boldsymbol m}\widehat u_0(\bsx,\bsy)\widehat v_2(\bsx)\partial_{\bsy}^{\boldsymbol m}\det J(\bsx,\bsy)\,{\rm d}\bsx.
\end{align*}
\paragraph{Step 2.}
Let $\mathcal S(\bsy)\!:H_0^1(D_{\rm ref})\to H^{-1}(D_{\rm ref})$ denote the differential operator
$$
\langle \mathcal S(\bsy)\widehat v,\widehat w\rangle_{H^{-1}(D_{\rm ref}),H_0^1(D_{\rm ref})}:=\int_{D_{\rm ref}}A(\bsx,\bsy)\nabla \widehat v(\bsx)\cdot \nabla \widehat w(\bsx)\,{\rm d}\bsx.
$$
We consider the test function 
\begin{align*}
    (\widehat v_1,\widehat v_2)=(\widehat z(\cdot,\cdot,\bsy)+\partial_{\bsy}^{\bsnu}\widehat u(\cdot,\cdot,\bsy),\partial_{\bsy}^{\bsnu}\widehat u(\bsx,0,\bsy))\in\mathcal Y,
\end{align*}
where $\widehat z(\bsx,t,\bsy)=(-\Delta)^{-1}(\det J(\bsx,\bsy)\frac{\partial}{\partial t}\partial_{\bsy}^{\bsnu}\widehat u\big)$ and the %
 negative Laplacian
 \begin{align*}
  -\Delta\!:H_0^1(D_{\rm ref})\cap H^2(D_{\rm ref})\to L^2(D_{\rm ref}).   
 \end{align*}
 Especially, there hold 
\begin{align*}\|\nabla v\|_{H_0^1(D_{\rm ref})}\geq \frac{1}{C_{D_{\rm ref}}}\|v\|_{L^2(D_{\rm ref})}\end{align*}
and 
\begin{align*}\|-\Delta v\|_{L^2(D_{\rm ref})}\leq \|v\|_{H_0^1(D_{\rm ref})\cap H^2(D_{\rm ref})}
\end{align*}
for all $v\in H_0^1(D_{\rm ref})\cap H^2(D_{\rm ref})$.

Hence
\begin{align*}
&\int_{D_{\rm ref}}\tfrac{\partial}{\partial t}\partial_{\bsy}^{\bsnu}\widehat u\widehat z\det J\,{\rm d}\bsx=\langle -\Delta \widehat z,\widehat z\rangle_{L^2(D_{\rm ref})}=\langle \nabla  \widehat z,\nabla \widehat z\rangle_{L^2(D_{\rm ref})}\\
&\geq \frac{1}{C_{D_{\rm ref}}^2}\|\widehat z\|_{H_0^1(D_{\rm ref})}^2\geq\frac{1}{C_{D_{\rm ref}}^2}\|\det J(\cdot,\bsy)\tfrac{\partial}{\partial t}\partial_{\bsy}^{\bsnu}\widehat u\|_{L^2(D_{\rm ref})}^2\\
&\geq \frac{\sigma_{\min}^{2d}}{C_{D_{\rm ref}}^2}\|\tfrac{\partial}{\partial t}\partial_{\bsy}^{\bsnu}\widehat u\|_{L^2(D_{\rm ref})}^2;
\end{align*}
see also~\citet[pg.~1315]{schwabstevenson}.

Plugging in the aforementioned test function, the variational formulation becomes
\begin{align*}
&\int_I \int_{D_{\rm ref}} \tfrac{\partial}{\partial t}\partial_\bsy^{\bsnu}\widehat u(\bsx,t,\bsy) \widehat z(\bsx,t,\bsy)\det J(\cdot,\bsy)\,{\rm d}\bsx\,{\rm d}t\\
&\quad+\int_I \int_{D_{\rm ref}} \tfrac{\partial}{\partial t}\partial_\bsy^{\bsnu}\widehat u(\bsx,t,\bsy) \partial_{\bsy}^{\bsnu}\widehat u(\bsx,t,\bsy)\det J(\cdot,\bsy)\,{\rm d}\bsx\,{\rm d}t\\
&\quad + \int_I \int_{D_{\rm ref}}A(\bsx,\bsy)\nabla \partial_\bsy^{\bsnu}\widehat u(\bsx,t,\bsy)\cdot \nabla \widehat z(\bsx,t,\bsy)\,{\rm d}\bsx\,{\rm d}t\\
&\quad+\int_I \int_{D_{\rm ref}}A(\bsx,\bsy)\nabla \partial_\bsy^{\bsnu}\widehat u(\bsx,t,\bsy)\cdot \nabla \partial_\bsy^{\bsnu}\widehat u(\bsx,t,\bsy)\,{\rm d}\bsx\,{\rm d}t\\
&\quad + \int_{D_{\rm ref}}|\partial_{\bsy}^{\bsnu}\widehat u(\bsx,0,\bsy)|^2\det J(\bsx,\bsy)\,{\rm d}\bsx\\
&=-\sum_{\mathbf 0\neq\boldsymbol m\leq\bsnu}\binom{\bsnu}{\boldsymbol m}\bigg[\int_I \int_{D_{\rm ref}}\tfrac{\partial}{\partial t}\partial_\bsy^{\bsnu-\boldsymbol m}\widehat u(\cdot,t,\bsy)\widehat z(\cdot,t)\partial_\bsy^{\boldsymbol m}\det J(\cdot,\bsy)\,{\rm d}\bsx\,{\rm d}t\\
&\quad+\int_I \int_{D_{\rm ref}}\tfrac{\partial}{\partial t}\partial_\bsy^{\bsnu-\boldsymbol m}\widehat u(\cdot,t,\bsy)\partial_{\bsy}^{\bsnu}\widehat u(\cdot,t)\partial_\bsy^{\boldsymbol m}\det J(\cdot,\bsy)\,{\rm d}\bsx\,{\rm d}t\\
&\quad +\int_I \int_{D_{\rm ref}}\partial_\bsy^{\boldsymbol m}A(\bsx,\bsy)\nabla \partial_{\bsy}^{\bsnu-\boldsymbol m}\widehat u(\bsx,t,\bsy)\cdot \nabla \widehat z(\bsx,t)\,{\rm d}\bsx\,{\rm d}t\\
&\quad+\int_I \int_{D_{\rm ref}}\partial_\bsy^{\boldsymbol m}A(\bsx,\bsy)\nabla \partial_{\bsy}^{\bsnu-\boldsymbol m}\widehat u(\bsx,t,\bsy)\cdot \nabla \partial_{\bsy}^{\bsnu}\widehat u(\bsx,t)\,{\rm d}\bsx\,{\rm d}t\\
&\quad +\int_{D_{\rm ref}}\partial_\bsy^{\bsnu-\boldsymbol m}\widehat u(\bsx,0,\bsy)\partial_{\bsy}^{\bsnu}\widehat u(\bsx,0,\bsy)\partial_{\bsy}^{\boldsymbol m}\det J(\bsx,\bsy)\,{\rm d}\bsx\bigg]\\
&\quad +\int_I\int_{D_{\rm ref}}  \partial_{\bsy}^{\bsnu}f_{\rm ref}(\cdot,t,\bsy)\widehat z\,{\rm d}\bsx\,{\rm d}t+\int_I\int_{D_{\rm ref}}  \partial_{\bsy}^{\bsnu}f_{\rm ref}(\cdot,t,\bsy)\partial_{\bsy}^{\bsnu}\widehat u\,{\rm d}\bsx\,{\rm d}t\\
&\quad +\sum_{\boldsymbol m\leq \bsnu}\binom{\bsnu}{\boldsymbol m}\int_{D_{\rm ref}}\partial_{\bsy}^{\bsnu-\boldsymbol m}\widehat u_0(\bsx,\bsy)\partial_{\bsy}^{\bsnu}\widehat u(\bsx,0,\bsy)\partial_{\bsy}^{\boldsymbol m}\det J(\bsx,\bsy)\,{\rm d}\bsx.
\end{align*}
\paragraph{Step 3.} The left-hand side can be bounded from below by noting that, by definition of $\widehat z$, there holds
\begin{align*}
\int_{D_{\rm ref}}A\nabla \partial_{\bsy}^{\bsnu}\widehat u\cdot \nabla \widehat z\,{\rm d}\bsx=\int_{D_{\rm ref}}\partial_{\bsy}^{\bsnu}\widehat u\tfrac{\partial}{\partial t}\partial_{\bsy}^{\bsnu}\widehat u\det J(\bsx,\bsy)\,{\rm d}\bsx,
\end{align*}
and thus
\begin{align*}
&\int_I \int_{D_{\rm ref}} \tfrac{\partial}{\partial t}\partial_\bsy^{\bsnu}\widehat u(\bsx,t,\bsy) \widehat z(\bsx,t,\bsy)\det J(\cdot,\bsy)\,{\rm d}\bsx\,{\rm d}t\\
&\quad+\int_I \int_{D_{\rm ref}} \tfrac{\partial}{\partial t}\partial_\bsy^{\bsnu}\widehat u(\bsx,t,\bsy) \partial_{\bsy}^{\bsnu}\widehat u(\bsx,t,\bsy)\det J(\cdot,\bsy)\,{\rm d}\bsx\,{\rm d}t\\
&\quad + \int_I \int_{D_{\rm ref}}A(\bsx,\bsy)\nabla \partial_\bsy^{\bsnu}\widehat u(\bsx,t,\bsy)\cdot \nabla \widehat z(\bsx,t,\bsy)\,{\rm d}\bsx\,{\rm d}t\\
&\quad+\int_I \int_{D_{\rm ref}}A(\bsx,\bsy)\nabla \partial_\bsy^{\bsnu}\widehat u(\bsx,t,\bsy)\cdot \nabla \partial_\bsy^{\bsnu}\widehat u(\bsx,t,\bsy)\,{\rm d}\bsx\,{\rm d}t\\
&\quad + \int_{D_{\rm ref}}|\partial_{\bsy}^{\bsnu}\widehat u(\bsx,0,\bsy)|^2\det J(\bsx,\bsy)\,{\rm d}\bsx\\
&\geq \frac{\sigma_{\min}^{2d}}{C_{D_{\rm ref}}^2}\|\tfrac{\partial}{\partial t}\partial_{\bsy}^{\bsnu}\widehat u\|_{\mathcal L^2}^2+\int_I \partial_t \int_{D_{\rm ref}}|\partial_{\bsy}^{\bsnu}\widehat u|^2\det J\,{\rm d}\bsx\,{\rm d}t \\
&\quad+ \int_I \int_{D_{\rm ref}}A(\bsx,\bsy)\nabla \partial_\bsy^{\bsnu}\widehat u(\bsx,t,\bsy)\cdot \nabla \partial_\bsy^{\bsnu}\widehat u(\bsx,t,\bsy)\,{\rm d}\bsx\,{\rm d}t\\
&\quad + \int_{D_{\rm ref}}|\partial_{\bsy}^{\bsnu}\widehat u(\bsx,0,\bsy)|^2\det J(\bsx,\bsy)\,{\rm d}\bsx\\
&=\frac{\sigma_{\min}^{2d}}{C_{D_{\rm ref}}^2}\|\tfrac{\partial}{\partial t}\partial_{\bsy}^{\bsnu}\widehat u\|_{\mathcal L^2}^2+ \int_I \int_{D_{\rm ref}}A(\bsx,\bsy)\nabla \partial_\bsy^{\bsnu}\widehat u(\bsx,t,\bsy)\cdot \nabla \partial_\bsy^{\bsnu}\widehat u(\bsx,t,\bsy)\,{\rm d}\bsx\,{\rm d}t\\
&\quad + \int_{D_{\rm ref}}|\partial_{\bsy}^{\bsnu}\widehat u(\bsx,T,\bsy)|^2\det J(\bsx,\bsy)\,{\rm d}\bsx\\
&\geq \frac{\sigma_{\min}^{2d}}{C_{D_{\rm ref}}^2}\|\tfrac{\partial}{\partial t}\partial_{\bsy}^{\bsnu}\widehat u\|_{\mathcal L^2}^2 + \frac{\sigma_{\min}^d}{\sigma_{\max}^2}\|\partial_\bsy^{\bsnu}\widehat u(\cdot,\cdot,\bsy)\|_{L^2(I;H_0^1(D_{\rm ref}))}^2\\
&\quad+ \sigma_{\min}^d\|\partial_{\bsy}^{\bsnu}\widehat u(\cdot,T,\bsy)\|_{L^2(D_{\rm ref})}^2\\
&\geq \min\bigg\{\frac{\sigma_{\min}^{2d}}{C_{D_{\rm ref}}^2},\frac{\sigma_{\min}^d}{\sigma_{\max}^2}\bigg\}\|\partial_{\bsy}^{\bsnu}\widehat u(\cdot,\cdot,\bsy)\|_{\mathcal X}^2.
\end{align*}
\paragraph{Step 4. }The right-hand side can be bounded from above
\begin{align*}
&\sum_{\mathbf 0\neq \boldsymbol m\leq \bsnu}\binom{\bsnu}{\boldsymbol m}\bigg[\|\tfrac{\partial}{\partial t}\partial_{\bsy}^{\bsnu-\boldsymbol m}\widehat u(\cdot,\cdot,\bsy)\|_{\mathcal L^2}\|\widehat z(\cdot,\cdot,\bsy)\|_{\mathcal L^2}\sigma_{\max}^d C_{\det J}^{|\boldsymbol m|}\frac{((|\boldsymbol m|+d^2-1)!)^\beta}{((d^2-1)!)^\beta}\\
&\quad+ \|\tfrac{\partial}{\partial t}\partial_{\bsy}^{\bsnu-\boldsymbol m}\widehat u(\cdot,\cdot,\bsy)\|_{\mathcal L^2}\|\partial_{\bsy}^{\bsnu}\widehat u(\cdot,\cdot,\bsy)\|_{\mathcal L^2}\sigma_{\max}^d C_{\det J}^{|\boldsymbol m|}\frac{((|\boldsymbol m|+d^2-1)!)^\beta}{((d^2-1)!)^\beta}\\
&\quad +\|\partial_{\bsy}^{\bsnu-\boldsymbol m}\widehat u(\cdot,\cdot,\bsy)\|_{L^2(I;H_0^1(D_{\rm ref}))}\|\widehat z(\cdot,\cdot,\bsy)\|_{L^2(I;H_0^1(D_{\rm ref}))}\\
&\quad\quad\quad\times C_{A,1}C_{A,2}^{|\boldsymbol m|}\max\{1,2^{|\boldsymbol m|-1}\}\frac{((|\boldsymbol m|+d^2)!)^\beta}{(d^2!)^\beta}\\
&\quad +\|\partial_{\bsy}^{\bsnu-\boldsymbol m}\widehat u(\cdot,\cdot,\bsy)\|_{L^2(I;H_0^1(D_{\rm ref}))}\|\partial_{\bsy}^{\boldsymbol \nu}\widehat u(\cdot,\cdot,\bsy)\|_{L^2(I;H_0^1(D_{\rm ref}))}\\
&\quad\quad\quad\times C_{A,1}C_{A,2}^{|\boldsymbol m|}\max\{1,2^{|\boldsymbol m|-1}\}\frac{((|\boldsymbol m|+d^2)!)^\beta}{(d^2!)^\beta}\\
&\quad +\|\partial_{\bsy}^{\bsnu-\boldsymbol m}\widehat u(\cdot,0,\bsy)\|_{L^2(D_{\rm ref})}\|\partial_{\bsy}^{\bsnu}\widehat u(\cdot,0,\bsy)\|_{L^2(D_{\rm ref})}\\
&\quad\quad\quad\times\sigma_{\max}^d C_{\det J}^{|\boldsymbol m|}\frac{((|\boldsymbol m|+d^2-1)!)^\beta}{((d^2-1)!)^\beta}\bigg]\boldsymbol b^{\boldsymbol m}\\
&\quad +\|\widehat z(\cdot,\cdot,\bsy)\|_{L^2(I\times D_{\rm ref})} C_{f_{\rm ref},1}C_{f_{\rm ref},2}^{|\bsnu|}\max\{1,\|\boldsymbol\rho\|_{\ell^{1/\beta}}^{|\bsnu|}\}\frac{((|\bsnu|+d^2)!)^\beta}{(d^2!)^\beta}\boldsymbol b^{\bsnu}\\
&\quad +\|\partial_{\bsy}^{\bsnu}\widehat u(\cdot,\cdot,\bsy)\|_{L^2(I\times D_{\rm ref})} C_{f_{\rm ref},1}C_{f_{\rm ref},2}^{|\bsnu|}\max\{1,\|\boldsymbol\rho\|_{\ell^{1/\beta}}^{|\bsnu|}\}\frac{((|\bsnu|+d^2)!)^\beta}{(d^2!)^\beta}\boldsymbol b^{\bsnu}\\
&\quad +\sum_{\boldsymbol m\leq \bsnu}\binom{\bsnu}{\boldsymbol m}\|\partial_{\bsy}^{\bsnu-\boldsymbol m}\widehat u_0\|_{L^2(D_{\rm ref})}\|\partial_{\bsy}^{\bsnu}\widehat u(\cdot,0,\bsy)\|_{L^2(D_{\rm ref})}\\
&\quad\quad\quad\times\sigma_{\max}^d C_{\det J}^{|\boldsymbol m|}\frac{((|\boldsymbol m|+d^2-1)!)^\beta}{((d^2-1)!)^\beta}\boldsymbol b^{\boldsymbol m}.
\end{align*}
By~\citet[Theorem~5.9.3]{evans}, there holds for some constant $M>0$
\begin{align}\notag
\max_{t\in I}\|v(\cdot,t)\|_{L^2(D_{\rm ref})}&\leq \frac{M}{\sqrt 2}(\|v\|_{L^2(I;H_0^1(D_{\rm ref}))}+\|\tfrac{\partial}{\partial t}v\|_{L^2(I;H^{-1}(D_{\rm ref}))})\\
&\leq M\|v\|_{\mathcal X},\label{eq:first}
\end{align}
and there exists 
$C_{\Delta,\max}>0$ such that
\begin{align*}
 \|(-\Delta)^{-1}v\|_{H_0^1(D_{\rm ref})\cap H^2(D_{\rm ref})}\leq \frac{C_{\Delta,\max}}{M\sigma_{\max}^d\sqrt{T}}\|v\|_{L^2(D_{\rm ref})}   
\end{align*}
for all $v\in L^2(D_{\rm ref})$. In particular, we have that
\begin{align}\label{eq:second}
\|\widehat z\|_{L^2(I\times D_{\rm ref})}\leq \frac{C_{\Delta,\max}}{M\sigma_{\max}^d\sqrt{T}}\|\det J\partial_{\bsy}^{\bsnu}\partial_t \widehat u\|_{\mathcal L^2}\leq C_{\Delta,\max}\|\partial_{\bsy}^{\bsnu}\widehat u\|_{\mathcal X}.
\end{align}
Moreover, since we have that 
$$
-\Delta \widehat z=\det J \partial_\bsy^{\bsnu}\widehat u\quad \text{in $L^2(D_{\rm ref})$},
$$
we infer
$$
\int_{D_{\rm ref}}\nabla \widehat z\cdot \nabla \widehat v\,{\rm d}\bsx=\int_{D_{\rm ref}} (\partial_{\bsy}^{\bsnu}\widehat u) \widehat v\det J\,{\rm d}\bsx\quad\text{for all}~\widehat v\in H_0^1(D_{\rm ref}).
$$
In consequence,
\begin{align*}
\|\nabla \widehat z\|_{L^2(D_{\rm ref})}^2&\leq \sigma_{\max}^d\|\partial_{\bsy}^{\bsnu}\widehat u\|_{L^2(D_{\rm ref})}\|\widehat z\|_{L^2(D_{\rm ref})}\\
&\leq \sigma_{\max}^dC_{D_{\rm ref}}\|\partial_{\bsy}^{\bsnu}\widehat u\|_{L^2(D_{\rm ref})}\|\nabla\widehat z\|_{L^2(D_{\rm ref})},
\end{align*}
whence
\begin{align}\label{eq:third}
\|\nabla \widehat z\|_{L^2(D_{\rm ref})}\leq \sigma_{\max}^dC_{D_{\rm ref}}\|\partial_{\bsy}^{\bsnu}\widehat u\|_{L^2(D_{\rm ref})}.
\end{align}

Hence, the right-hand side has the upper bound 
\begin{align*}
&\sum_{\mathbf 0\neq \boldsymbol m\leq \bsnu}\binom{\bsnu}{\boldsymbol m}\|\partial_{\bsy}^{\bsnu}\widehat u(\cdot,\cdot,\bsy)\|_{\mathcal X}\boldsymbol b^{\boldsymbol m}\\
&\quad \times \bigg[\|\partial_{\bsy}^{\bsnu-\boldsymbol m}\widehat u(\cdot,\cdot,\bsy)\|_{\mathcal X}C_{\Delta,\max}\sigma_{\max}^{2d} C_{\det J}^{|\boldsymbol m|}\frac{((|\boldsymbol m|+d^2-1)!)^\beta}{((d^2-1)!)^\beta}\\
&\quad +\|\partial_{\bsy}^{\bsnu-\boldsymbol m}\widehat u(\cdot,\cdot,\bsy)\|_{\mathcal X}C_{D_{\rm ref}}\sigma_{\max}^d C_{\det J}^{|\boldsymbol m|}\frac{((|\boldsymbol m|+d^2-1)!)^\beta}{((d^2-1)!)^\beta}\\
&\quad +\sigma_{\max}^dC_{D_{\rm ref}}^2\|\partial_{\bsy}^{\bsnu-\boldsymbol m}\widehat u(\cdot,\cdot,\bsy)\|_{\mathcal X}C_{A,1}C_{A,2}^{|\boldsymbol m|}\max\{1,2^{|\boldsymbol m|-1}\}\frac{((|\boldsymbol m|+d^2)!)^\beta}{(d^2!)^\beta}\\
&\quad +\|\partial_{\bsy}^{\bsnu-\boldsymbol m}\widehat u(\cdot,\cdot,\bsy)\|_{\mathcal X}C_{A,1}C_{A,2}^{|\boldsymbol m|}\max\{1,2^{|\boldsymbol m|-1}\}\frac{((|\boldsymbol m|+d^2)!)^\beta}{(d^2!)^\beta}\\
&\quad +M^2\|\partial_{\bsy}^{\bsnu-\boldsymbol m}\widehat u(\cdot,\cdot,\bsy)\|_{\mathcal X}\sigma_{\max}^d C_{\det J}^{|\boldsymbol m|}\frac{((|\boldsymbol m|+d^2-1)!)^\beta}{((d^2-1)!)^\beta}\bigg]\\
&\quad +\|\partial_{\bsy}^{\bsnu}\widehat u(\cdot,\cdot,\bsy)\|_{\mathcal X} C_{\Delta,\max}\sigma_{\max}^dC_{f_{\rm ref},1}C_{f_{\rm ref},2}^{|\bsnu|}\max\{1,\|\boldsymbol\rho\|_{\ell^{1/\beta}}^{|\bsnu|}\}\frac{((|\bsnu|+d^2)!)^\beta}{(d^2!)^\beta}\boldsymbol b^{\bsnu}\\&\quad +C_{D_{\rm ref}}\|\partial_{\bsy}^{\bsnu}\widehat u(\cdot,\cdot,\bsy)\|_{\mathcal X} C_{f_{\rm ref},1}C_{f_{\rm ref},2}^{|\bsnu|}\max\{1,\|\boldsymbol\rho\|_{\ell^{1/\beta}}^{|\bsnu|}\}\frac{((|\bsnu|+d^2)!)^\beta}{(d^2!)^\beta}\boldsymbol b^{\bsnu}\\
&\quad + \sum_{\boldsymbol m\leq \bsnu}\bigg[\binom{\bsnu}{\boldsymbol m}M\|\partial_{\bsy}^{\bsnu-\boldsymbol m}\widehat u_0\|_{L^2(D_{\rm ref})}\|\partial_{\bsy}^{\bsnu}\widehat u(\cdot,\cdot,\bsy)\|_{\mathcal X}  \sigma_{\max}^d C_{\det J}^{|\boldsymbol m|}\\
&\quad\quad\quad\quad \times\frac{((|\boldsymbol m|+d^2-1)!)^\beta}{((d^2-1)!)^\beta}\boldsymbol b^{\boldsymbol m}\bigg],
\end{align*}
where we applied the inequalities~\eqref{eq:first}--\eqref{eq:third} as well as the Poincar\'e inequality multiple times.

\paragraph{Step 5.} We derive the following bound on the pullback initial condition.

\begin{lemma}\label{mondaylemma}
Under Assumptions~\ref{a1}\,--\,\ref{assump:gevrey} and~\ref{a10}, there holds
$$
\|\partial_{\bsy}^{\bsnu}\widehat u_0(\cdot,\bsy)\|_{L^2(D_{\rm ref})}\leq 2^{\beta|\bsnu|-\beta}d^{\beta|\bsnu|}C_{u_0}C^{|\bsnu|}\boldsymbol b^{\bsnu}(|\bsnu|!)^{\beta}
$$
for all $\bsnu\in\mathscr F\setminus\{\mathbf 0\}$ and $\bsy\in U$.
\end{lemma}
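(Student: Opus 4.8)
The plan is to mirror the proof of Lemma~\ref{8}, the two differences being that we now measure in $L^2(D_{\rm ref})$ rather than $L^\infty(D_{\rm ref})$, and that Assumption~\ref{a10} provides a \emph{uniform} $L^2$ bound on the spatial derivatives of $u_0$ with neither factorial growth nor $\boldsymbol\rho$-weights. First I would apply Fa\`a di Bruno's formula~\cite{savits} to write
\[
\partial_{\bsy}^{\bsnu}u_0(\boldsymbol V(\bsx,\bsy))=\sum_{\substack{\boldsymbol\lambda\in\mathbb N_0^d\\ 1\leq |\boldsymbol\lambda|\leq |\bsnu|}}\partial_{\bsx}^{\boldsymbol\lambda}u_0(\bsx)\big|_{\bsx=\boldsymbol V(\bsx,\bsy)}\,\alpha_{\bsnu,\boldsymbol\lambda}(\bsx,\bsy),
\]
where the coefficients $\alpha_{\bsnu,\boldsymbol\lambda}$ are generated by exactly the same recursion as in the proof of Lemma~\ref{8} (they depend only on $\boldsymbol V$, not on $u_0$). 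Consequently the estimate $\|\alpha_{\bsnu,\boldsymbol\lambda}(\cdot,\bsy)\|_{L^\infty}\leq C^{|\bsnu|}\frac{(|\bsnu|!)^\beta ((|\bsnu|-1)!)^\beta}{(\boldsymbol\lambda!)^\beta((|\bsnu|-|\boldsymbol\lambda|)!)^\beta((|\boldsymbol\lambda|-1)!)^\beta}\boldsymbol b^{\bsnu}$ of~\cite[Theorem~5.1]{ks24} remains available verbatim.

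Next I would take the $L^2(D_{\rm ref})$ norm, apply the triangle inequality, and pull the $L^\infty$ bound on $\alpha_{\bsnu,\boldsymbol\lambda}$ out of each integral. The genuinely new ingredient is controlling $\partial_{\bsx}^{\boldsymbol\lambda}u_0\circ\boldsymbol V(\cdot,\bsy)$ in $L^2$: using the change of variables $\bsx\mapsto\boldsymbol V(\bsx,\bsy)$, the lower bound $|\det J(\cdot,\bsy)|\geq\sigma_{\min}^d$ from Assumption~\ref{a2sv}, and the inclusion $D(\bsy)\subset\mathcal D$, one obtains
\[
\|\partial_{\bsx}^{\boldsymbol\lambda}u_0\circ\boldsymbol V(\cdot,\bsy)\|_{L^2(D_{\rm ref})}\leq\sigma_{\min}^{-d/2}\|\partial_{\bsx}^{\boldsymbol\lambda}u_0\|_{L^2(\mathcal D)}\leq\sigma_{\min}^{-d/2}C_{u_0},
\]
and the harmless factor $\sigma_{\min}^{-d/2}\geq 1$ is absorbed into the constant $C_{u_0}$.

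It then remains to estimate the combinatorial sum. Grouping the multi-indices by $|\boldsymbol\lambda|=\ell$ and combining the elementary inequality $\sum_k a_k\leq(\sum_k a_k^{1/\beta})^{\beta}$ (valid for $\beta\geq 1$) with the multinomial identity $\sum_{|\boldsymbol\lambda|=\ell}\frac{\ell!}{\boldsymbol\lambda!}=d^{\ell}$ gives $\sum_{|\boldsymbol\lambda|=\ell}\frac{1}{(\boldsymbol\lambda!)^\beta}\leq\big(\frac{d^\ell}{\ell!}\big)^{\beta}$; this is where the dimension factor $d^{\beta|\bsnu|}$ in the claim enters, playing the role that $\|\boldsymbol\rho\|_{\ell^{1/\beta}}^{|\bsnu|}$ played in Lemma~\ref{8}. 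After discarding the favourable factor $(\ell!)^{-\beta}$ and bounding $d^{\beta\ell}\leq d^{\beta|\bsnu|}$, I would invoke~\cite[Lemma~A.1]{ks24} in the form $\sum_{\ell=1}^{|\bsnu|}\frac{1}{((|\bsnu|-\ell)!)^\beta((\ell-1)!)^\beta}\leq\frac{2^{\beta|\bsnu|-\beta}}{((|\bsnu|-1)!)^\beta}$. The resulting $((|\bsnu|-1)!)^\beta$ cancels the like factor carried in the prefactor from the $\alpha$-bound, leaving precisely $2^{\beta|\bsnu|-\beta}d^{\beta|\bsnu|}C_{u_0}C^{|\bsnu|}\boldsymbol b^{\bsnu}(|\bsnu|!)^{\beta}$, as claimed. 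The main (though modest) obstacle is the change-of-variables step bounding the $L^2$ norm of $u_0\circ\boldsymbol V$; everything else is the same combinatorial bookkeeping as in Lemma~\ref{8}.
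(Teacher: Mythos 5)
Your proof follows the paper's argument essentially verbatim: Fa\`a di Bruno's formula with the $\alpha$-coefficient bound from \cite[Theorem~5.1]{ks24}, Assumption~\ref{a10}, and the same grouping-by-$|\boldsymbol\lambda|$ combinatorics from Lemma~\ref{8} specialized to $\boldsymbol\rho=\mathbf 1$, which is exactly where the factor $d^{\beta|\bsnu|}$ arises. You are in fact slightly more careful than the paper at the change-of-variables step, where the Jacobian factor $\sigma_{\min}^{-d/2}$ appears when passing from $\|\partial_{\bsx}^{\boldsymbol\lambda}u_0\|_{L^2(\mathcal D)}$ to $\|\partial_{\bsx}^{\boldsymbol\lambda}u_0\circ\boldsymbol V(\cdot,\bsy)\|_{L^2(D_{\rm ref})}$ and must be absorbed into the constant --- a detail the paper's proof silently skips.
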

\begin{proof} We use Fa\`a di Bruno's formula~\citep{savits} to obtain
$$
\partial_{\bsy}^{\bsnu}\widehat u_0(\bsx,\bsy)=\sum_{\substack{\boldsymbol\lambda\in\mathbb N_0^d\\ 1\leq|\boldsymbol\lambda|\leq |\bsnu|}}\partial_{\bsx}^{\boldsymbol\lambda}u_0(\bsx)\bigg|_{\bsx=\boldsymbol V(\bsx,\bsy)}\alpha_{\bsnu,\boldsymbol\lambda}(\bsx,\bsy),
$$
where the coefficients $(\alpha_{\bsnu,\boldsymbol\lambda})$ are defined exactly as in Lemma~\ref{8}. In particular,
\begin{align*}
\|\alpha_{\bsnu,\boldsymbol\lambda}(\cdot,\bsy)\|_{L^\infty}\leq C^{|\bsnu|}\frac{(|\bsnu|!)^{\beta}((|\bsnu|-1)!)^{\beta}}{(\boldsymbol\lambda!)^\beta ((|\bsnu|-|\boldsymbol\lambda|)!)^{\beta}((|\boldsymbol\lambda|-1)!)^{\beta}}\boldsymbol b^{\bsnu}.
\end{align*}
Together with Assumption~\ref{a10}, this yields
\begin{align*}
\|\partial_{\bsy}^{\bsnu}\widehat u_0(\cdot,\bsy)\|_{L^2(D_{\rm ref})}\leq C_{u_0}C^{|\bsnu|}\boldsymbol b^{\bsnu}\sum_{\substack{\boldsymbol\lambda\in\mathbb N_0^d\\ 1\leq |\boldsymbol \lambda|\leq |\bsnu|}}\frac{(|\bsnu|!)^{\beta}((|\bsnu|-1)!)^{\beta}}{(\boldsymbol\lambda!)^{\beta}((|\bsnu|-|\boldsymbol\lambda|)!)^{\beta}((|\boldsymbol\lambda|-1)!)^{\beta}}.
\end{align*}
The claim follows in complete analogy to the proof of Lemma~\ref{8} in the special case $\boldsymbol\rho=\mathbf 1\in\mathbb R^d$.\end{proof}

\paragraph{Step 6.} Putting the lower bound together with the upper bound yields the recurrence relation
\begin{align*}
\|\widehat u(\cdot,\cdot,\bsy)\|_{\mathcal X}&\leq C_0\\
\|\partial_{\bsy}^{\bsnu}\widehat u(\cdot,\cdot,\bsy)\|_{\mathcal X}&\leq \widetilde C_1\sum_{\mathbf 0\neq \boldsymbol m\leq \bsnu}\binom{\bsnu}{\boldsymbol m}\widetilde C_2^{|\boldsymbol m|}\|\partial_{\bsy}^{\bsnu-\boldsymbol m}\widehat u(\cdot,\cdot,\bsy)\|_{\mathcal X}((|\boldsymbol m|+d^2)!)^{\beta}\boldsymbol b^{\boldsymbol m}\\
&\quad + \widetilde C_1\widetilde C_2^{|\bsnu|}((|\bsnu|+d^2)!)^\beta \boldsymbol b^{\bsnu}\\
&\quad + \widetilde C_1 \sum_{\boldsymbol m\leq \bsnu}\binom{\bsnu}{\boldsymbol m}\|\partial_{\bsy}^{\bsnu-\boldsymbol m}\widehat u_0\|_{L^2(D_{\rm ref})}\widetilde C_2^{|\boldsymbol m|}((|\boldsymbol m|+d^2)!)^\beta \boldsymbol b^{\boldsymbol m},
\end{align*}
for all $\bsnu\in\mathscr F\setminus\{\mathbf 0\}$, where \begin{align*}
&C_0=\widetilde C_1(1+\|\widehat u_0\|_{L^2}),\\
&\widetilde C_1=\big(C_{\Delta,\max}\sigma_{\max}^{2d}+C_{D_{\rm ref}}\sigma_{\max}^d+\sigma_{\max}^dC_{D_{\rm ref}}^2C_{A,1}+C_{A,1}+M^2\sigma_{\max}^d\\
&\quad\quad+C_{\Delta,\max}\sigma_{\max}^dC_{f_{\rm ref},1}+C_{D_{\rm ref}}C_{f_{\rm ref},1}+M\sigma_{\max}^d\big)\min\big\{\frac{\sigma_{\min}^{2d}}{C_{D_{\rm ref}}^2C_{\boldsymbol\Delta}^2},\frac{\sigma_{\min}^d}{\sigma_{\max}^2}\big\}^{-1},\\
&\widetilde C_2=4C_{\det J}+4C_{A,2}+2C_{f_{\rm ref,2}}\max\{1,\|\boldsymbol\rho\|_{\ell^{1/\beta}}\}.\end{align*} 

By Lemma~\ref{mondaylemma}, we can simplify the third term in the recurrence bound as follows:
\begin{align*}
&\sum_{\boldsymbol m\leq \bsnu}\binom{\bsnu}{\boldsymbol m}\|\partial_{\bsy}^{\bsnu-\boldsymbol m}\widehat u_0\|_{L^2(D_{\rm ref})}\widetilde C_2^{|\boldsymbol m|}((|\boldsymbol m|+d^2)!)^\beta \boldsymbol b^{\boldsymbol m}\\
&\leq 2^{\beta|\bsnu|-\beta}d^{\beta|\bsnu|}C_{u_0}C^{|\bsnu|} \widetilde C_2^{|\boldsymbol \nu|}\boldsymbol b^{\bsnu}\sum_{\boldsymbol m\leq \bsnu}\binom{\bsnu}{\boldsymbol m}((|\bsnu|-|\boldsymbol m|)!)^{\beta}((|\boldsymbol m|+d^2)!)^{\beta}.
\end{align*}
Here, we can further simplify
\begin{align*}
&\sum_{\boldsymbol m\leq \bsnu}\binom{\bsnu}{\boldsymbol m}((|\bsnu|-|\boldsymbol m|)!)^{\beta}((|\boldsymbol m|+d^2)!)^{\beta}\\&=\sum_{\ell=0}^{|\bsnu|}((|\bsnu|-\ell)!)^{\beta}((\ell+d^2)!)^{\beta}\sum_{\substack{|\boldsymbol m|=\ell\\ \boldsymbol m\leq \bsnu}}\binom{\bsnu}{\boldsymbol m}\\
&=\sum_{\ell=0}^{|\bsnu|}((|\bsnu|-\ell)!)^{\beta}((\ell+d^2)!)^{\beta}\binom{|\bsnu|}{\ell}\\
&\leq \sum_{\ell=0}^{|\bsnu|}((|\bsnu|-\ell)!)^{\beta}((\ell+d^2)!)^{\beta}\binom{|\bsnu|}{\ell}^{\beta}\\
&=(|\bsnu|!)^{\beta}\sum_{\ell=0}^{|\bsnu|}\frac{((\ell+d^2)!)^{\beta}}{(\ell!)^{\beta}}\\
&\leq (|\bsnu|!)^{\beta}\bigg(\sum_{\ell=0}^{|\bsnu|}\frac{(\ell+d^2)!}{\ell!}\bigg)^{\beta}\\
&=(|\bsnu|!)^{\beta}\bigg(\frac{(|\bsnu|+d^2+1)!}{(d^2+1)!|\bsnu|!}\bigg)^{\beta}=\frac{((|\bsnu|+d^2+1)!)^{\beta}}{((d^2+1)!)^{\beta}}.
\end{align*}
This means that the recurrence can be expressed as
\begin{align*}
\|\widehat u(\cdot,\cdot,\bsy)\|_{\mathcal X}&\leq C_0\\
\|\partial_{\bsy}^{\bsnu}\widehat u(\cdot,\cdot,\bsy)\|_{\mathcal X}&\leq C_1\sum_{\mathbf 0\neq \boldsymbol m\leq \bsnu}\binom{\bsnu}{\boldsymbol m}C_2^{|\boldsymbol m|}\|\partial_{\bsy}^{\bsnu-\boldsymbol m}\widehat u(\cdot,\cdot,\bsy)\|_{\mathcal X}((|\boldsymbol m|+d^2)!)^{\beta}\boldsymbol b^{\boldsymbol m}\\
&\quad + C_1C_2^{|\bsnu|}((|\bsnu|+d^2+1
)!)^\beta \boldsymbol b^{\bsnu},
\end{align*}
with $C_1=\widetilde C_1+C_{u_0}$ and $C_2=\widetilde C_2+2^{\beta}d^{\beta}C\widetilde C_2$. We can now apply Lemma~\ref{superlemma} to obtain the inductive bound.

\begin{theorem}\label{parabolicregularitybound}
Let $\bsnu\in\mathscr F$ and $\bsy\in U$. Under Assumptions~\ref{a1}\,--\,\ref{a4} as well as Assumptions~\ref{a10}\,--\,\ref{timederivL2}, the solution to~\eqref{spacetimepullback} satisfies
$$
\|\partial_{\bsy}^{\bsnu}\widehat u(\cdot,\cdot,\bsy)\|_{\mathcal X}\leq C_{\widehat u,1}C_{\widehat u,2}^{|\bsnu|}(|\bsnu|!)^{\beta}\boldsymbol b^{\bsnu},
$$
where the space $\mathcal X$ is defined in~\eqref{eq:x} and
\begin{align*}
&C_{\widehat u,1}=1+C_0,\\
&C_{\widehat u,2}=\widetilde C^2((d^2+1)!)^{\beta}2^{\beta(d^2+2)+1},\\
&\widetilde C=\max\{\widetilde C_1+C_{u_0},\widetilde C_2+2^{\beta}d^{\beta}C\widetilde C_2\},\\
&C_0=\widetilde C_1(1+C_{u_0}).
\end{align*}
\end{theorem}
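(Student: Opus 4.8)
The plan is to solve the recurrence relation for $\|\partial_{\bsy}^{\bsnu}\widehat u(\cdot,\cdot,\bsy)\|_{\mathcal X}$ established at the end of Step~6 by means of the abstract recursion estimate in Lemma~\ref{superlemma}, in direct parallel with the treatment of the stationary problem in Theorem~\ref{RegBoundPDEsolStat}. All of the genuinely problem-specific work is already behind us: differentiating the space--time variational formulation~\eqref{spacetimepullback} (Step~1), selecting the test function $(\widehat z+\partial_{\bsy}^{\bsnu}\widehat u,\partial_{\bsy}^{\bsnu}\widehat u(\cdot,0,\bsy))$ that renders the left-hand side coercive in the $\mathcal X$-norm (Steps~2--3), estimating the right-hand side via the auxiliary inequalities~\eqref{eq:first}--\eqref{eq:third} (Step~4), and controlling the pullback initial datum through Lemma~\ref{mondaylemma} (Step~5). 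The outcome is a closed recursion for $a_{\bsnu}:=\|\partial_{\bsy}^{\bsnu}\widehat u(\cdot,\cdot,\bsy)\|_{\mathcal X}$ with base value $a_{\mathbf 0}\leq C_0$, convolution constant $C_1$ and geometric rate $C_2$, and an inhomogeneous term carrying the shifted factorial $((|\bsnu|+d^2+1)!)^{\beta}$.

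The only structural difference from the elliptic case is this shift by one: the convolution term of the Step~6 recurrence carries $((|\boldsymbol m|+d^2)!)^{\beta}$ whereas the inhomogeneous term carries $((|\bsnu|+d^2+1)!)^{\beta}$, the extra unit having entered through the Gosper summation over the initial-condition contribution. To place the recurrence within the hypotheses of Lemma~\ref{superlemma} I would harmonize the two factorials using the monotonicity bound $((|\boldsymbol m|+d^2)!)^{\beta}\leq((|\boldsymbol m|+d^2+1)!)^{\beta}$, so that both terms share the common third parameter $d^2+1$. Writing $\widetilde C:=\max\{C_1,C_2\}=\max\{\widetilde C_1+C_{u_0},\,\widetilde C_2+2^{\beta}d^{\beta}C\widetilde C_2\}$ and applying Lemma~\ref{superlemma} with this parameter then produces the inductive bound
\begin{align*}
\|\partial_{\bsy}^{\bsnu}\widehat u(\cdot,\cdot,\bsy)\|_{\mathcal X}\leq(1+C_0)\,\widetilde C^{|\bsnu|}(|\bsnu|!)^{\beta}\,\widetilde\tau_{|\bsnu|,\beta,d^2+1}\,\boldsymbol b^{\bsnu},
\end{align*}
where $\widetilde\tau$ is the sequence defined by the recursion~\eqref{eq:tautildeseq}, now evaluated with third index $d^2+1$ in place of $d^2$.

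It then remains to replace the implicit factor $\widetilde\tau_{|\bsnu|,\beta,d^2+1}$ by an explicit geometric one. As in the stationary argument, I would first use the comparison $\widetilde\tau_{\nu,\beta,d^2+1}\leq\widetilde C^{\nu}\tau_{\nu,\beta,d^2+1}$ to pass to the auxiliary sequence $\tau$ of~\eqref{eq:tauseq}, and then invoke Lemma~\ref{lemma:taubound}, which furnishes a geometric estimate for $\tau_{\nu,\beta,d^2+1}$ with per-step factor $((d^2+1)!)^{\beta}2^{\beta(d^2+2)+1}$. Combining the two powers of $\widetilde C$ with this factor collects the base contribution into $C_{\widehat u,1}=1+C_0$ and the rate into $C_{\widehat u,2}=\widetilde C^{2}((d^2+1)!)^{\beta}2^{\beta(d^2+2)+1}$, yielding the asserted bound. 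The step I would watch most carefully is precisely this bookkeeping of the shift $d^2\rightsquigarrow d^2+1$ through the $\tau$-machinery, since this single extra factorial, inherited from the initial condition, is exactly what separates $C_{\widehat u,2}$ from its elliptic counterpart in Theorem~\ref{RegBoundPDEsolStat}.
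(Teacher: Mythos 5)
Your proposal is correct and follows essentially the same route as the paper: the paper's proof of this theorem is exactly the application of Lemma~\ref{superlemma} (with $k=d^2+1$) to the Step~6 recurrence, followed by the comparison $\widetilde\tau_{\nu,\beta,d^2+1}\leq\widetilde C^{\nu}\tau_{\nu,\beta,d^2+1}$ and Lemma~\ref{lemma:taubound}, mirroring Theorem~\ref{RegBoundPDEsolStat}. You also correctly identify the one point the paper leaves implicit, namely that the convolution factor $((|\boldsymbol m|+d^2)!)^{\beta}$ must be enlarged to $((|\boldsymbol m|+d^2+1)!)^{\beta}$ so that both terms of the recurrence share the parameter $d^2+1$ required by Lemma~\ref{superlemma}.
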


\section{Error analysis}\label{sec:error}
For the error analysis of the numerical treatment of the considered problems \eqref{eq:weakform} and \eqref{spacetimepullback} one needs to take three types of errors into account. The first one comes from the dimension truncation of the input random field to finitely many terms. The second error is caused by QMC cubature and the last one is the finite element discretization error.  Combining these estimates will yield an overall error estimate. Throughout this section the mentioned errors will be stated for both, the stationary and the parabolic problem. We will measure the error for the stationary case in $\|\cdot\|_{H_0^1(D_{\rm ref})}$ and in the parabolic problem the error will be measured in the norm $\|\cdot\|_{L^2(I;H_0^1(D_{\rm ref}))}$.
\subsection{Truncation error}
In the case of the dimension truncation error one wants to find an upper bound of the expression
\begin{align*}
    \bigg\|\int_{U}(\widehat u(\bsy)-\widehat u_s(\bsy))\,\text{d}\bsy\bigg\|_Z,
\end{align*}
where $Z=H_0^1(D_{\rm ref})$ if $\widehat u$ is the solution of the stationary problem \eqref{eq:weakform} and $Z=L^2(I;H_0^1(D_{\rm ref}))$ if we consider $\widehat u$ to be the solution to the space-time formulation of the parabolic equation \eqref{spacetimepullback}. Here $\widehat u_s$ denotes the dimensionally truncated PDE solution, which is defined as
\begin{align*}
    \widehat u_s(\bsy):=\widehat u( y_1,\dots,y_s,0,0,\dots)\enspace \text{for}\enspace \bsy\in U,\enspace s\in\mathbb{N}.
\end{align*}
The dimension truncation rate follows as an immediate consequence of Theorems~\ref{RegBoundPDEsolStat} and~\ref{parabolicregularitybound} together with~\citet[Theorem~4.3]{DimTruncAnalysisLogNormal}.

\begin{theorem}\label{thm:4}
Under Assumptions~\ref{a1}\,--\,\ref{timederivL2}, there exists a constant $C_{\rm dim}>0$ independent  of $s$ such that
\begin{align}\label{ineq:dm}
    \bigg\|\int_{U}(\widehat u(\bsy)-\widehat u_s(\bsy))\,\textup{d}\bsy\bigg\|_Z\leq C_{\rm dim}s^{-\frac{2}{p}+1},
\end{align}
where $p\in(0,1)$ as in Assumption \ref{a6} and we set $Z=H_0^1(D_{\rm ref})$ if $\widehat u$ is the solution to the pullback Poisson equation. If $\widehat u$ is the solution to the pullback heat equation, then under the additional Assumption~\ref{a10} inequality~\eqref{ineq:dm} holds with $Z=L^2(I;H_0^1(D_{\rm ref}))$.
\end{theorem}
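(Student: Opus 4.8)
The plan is to deduce the estimate as a direct application of the dimension truncation bound \cite[Theorem~4.3]{DimTruncAnalysisLogNormal}, whose hypotheses are exactly a Gevrey-type parametric regularity estimate together with the summability and monotonicity of the dominating sequence. Thus the whole proof amounts to recasting the regularity bounds already proved into the normalized form required by that result and, in the parabolic case, passing to the correct target norm.

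First I would collect the two parametric regularity estimates. For the pullback Poisson solution, Theorem~\ref{RegBoundPDEsolStat} gives $\|\partial_{\bsy}^{\bsnu}\widehat u(\cdot,\bsy)\|_{H_0^1(D_{\rm ref})}\leq C_{\widehat u,1}C_{\widehat u,2}^{|\bsnu|}(|\bsnu|!)^{\beta}\boldsymbol b^{\bsnu}$ uniformly in $\bsy\in U$; for the pullback heat solution, Theorem~\ref{parabolicregularitybound} gives the same bound in the graph norm $\|\cdot\|_{\mathcal X}$. Since $\|v\|_{L^2(I;H_0^1(D_{\rm ref}))}\leq\|v\|_{\mathcal X}$ by the definition of $\mathcal X$ in~\eqref{eq:x}, the estimate persists with the weaker norm $Z=L^2(I;H_0^1(D_{\rm ref}))$ on the left. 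In both cases we therefore have a single bound
\begin{equation*}
\|\partial_{\bsy}^{\bsnu}\widehat u(\bsy)\|_{Z}\leq C_{\widehat u,1}\,C_{\widehat u,2}^{|\bsnu|}(|\bsnu|!)^{\beta}\boldsymbol b^{\bsnu}\qquad\text{for all }\bsnu\in\mathscr F,\ \bsy\in U,
\end{equation*}
with $Z=H_0^1(D_{\rm ref})$ in the stationary case and $Z=L^2(I;H_0^1(D_{\rm ref}))$ in the parabolic case.

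Next I would absorb the geometric factor into the sequence by setting $\widetilde{\boldsymbol b}:=C_{\widehat u,2}\boldsymbol b$, so that $C_{\widehat u,2}^{|\bsnu|}\boldsymbol b^{\bsnu}=\widetilde{\boldsymbol b}^{\bsnu}$. By homogeneity of $\ell^p$, Assumption~\ref{a6} yields $\widetilde{\boldsymbol b}\in\ell^p(\mathbb N)$ with the same $p\in(0,1)$, and the monotonicity assumption $b_1\geq b_2\geq\cdots$ is inherited by $\widetilde{\boldsymbol b}$. Well-posedness of the (truncated) solutions for every $\bsy\in U$ and the convergence $\widehat u_s\to\widehat u$ are guaranteed by the standing assumptions. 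Hence $\widehat u$ meets verbatim the hypotheses of \cite[Theorem~4.3]{DimTruncAnalysisLogNormal} with dominating sequence $\widetilde{\boldsymbol b}$ and Gevrey exponent $\beta$, and that theorem delivers $\|\int_U(\widehat u(\bsy)-\widehat u_s(\bsy))\,\rd\bsy\|_Z\leq C_{\rm dim}\,s^{-2/p+1}$ with $C_{\rm dim}$ independent of $s$.

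I expect no genuine obstacle here: the delicate part of the argument is entirely contained in the cited result, where the improved exponent $-2/p+1$ (rather than the pointwise rate $-1/p+1$) arises because integration over the symmetric domain $U=[-\tfrac12,\tfrac12]^{\mathbb N}$ annihilates the first-order Taylor term in the truncated coordinates, pushing the leading contribution to second order in the $\widetilde b_j$; the $\ell^p$ tail then produces the doubled exponent. The only points requiring care on our side are the normalization of the dominating sequence and, for the heat equation, the passage from $\|\cdot\|_{\mathcal X}$ to $\|\cdot\|_{L^2(I;H_0^1(D_{\rm ref}))}$, both of which are immediate.
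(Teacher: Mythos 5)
Your proposal is correct and follows the same route as the paper, which likewise obtains the rate as an immediate consequence of Theorems~\ref{RegBoundPDEsolStat} and~\ref{parabolicregularitybound} combined with \cite[Theorem~4.3]{DimTruncAnalysisLogNormal}. Your additional remarks --- absorbing $C_{\widehat u,2}$ into the dominating sequence and passing from $\|\cdot\|_{\mathcal X}$ to $\|\cdot\|_{L^2(I;H_0^1(D_{\rm ref}))}$ --- are exactly the (unstated) bookkeeping the paper's one-line proof relies on.
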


\subsection{QMC error}
We want to quantify the error resulting from the approximation of an integral over the $s$-dimensional unit cube using QMC cubature, which was introduced in Section~\ref{sec:qmc}.

The following theorem gives a choice of POD weights as well as a QMC error bound in the sense of the root-mean square error independent of the dimension $s$.
\begin{theorem}\label{thm:qmcweight}
Let $\widehat u_s$ denote either the solution to the Poisson problem, in which case we assume that $Z=H_0^1(D_{\rm ref})$ and Assumptions~\ref{a1}\,--\,\ref{a6} hold, or let $\widehat u_s$ denote the solution to the heat equation, in which case we assume that $Z=L^2(I;H_0^1(D_{\rm ref}))$ and Assumptions~\ref{a1}\,--\,\ref{a6} and~\ref{a10}\,--\,\ref{timederivL2}. Then the root-mean-square error using a randomly shifted lattice rule \eqref{QMC_rule} with $n=2^m$, $m \in \mathbb{N}$, points  constructed by a CBC algorithm with $R$ independent random shifts  satisfies
\begin{align*}
\sqrt{\mathbb E_{\boldsymbol\Delta}\|I_s(\widehat u_s)-Q_{\rm ran}(\widehat u_s)\|_Z^2}\leq C\cdot n^{-\min\{\frac{1}{p}-\frac12 ,1 -\varepsilon\}}\quad\text{for arbitrary $\varepsilon\in(0,\tfrac12)$},
\end{align*}
where $C>0$ is independent of $s$ 
when the weights $(\gamma_{\setu})_{\setu\subset\mathbb N}$ are chosen as the sequence of product and order dependent (POD) weights
\begin{align*}
&\gamma_{\mathfrak u}=\bigg((|\setu|!)^{\beta}\prod_{j\in\setu}\frac{C_{\widehat u,2}b_j}{\sqrt{2\zeta(2\lambda)/(2\pi^2)^{\lambda}}}\bigg)^{\frac{2}{1+\lambda}},\\
&\lambda=\begin{cases}
\frac{p}{2-p}&\text{if}~p\in(\tfrac23,\tfrac{1}{\beta}),\\
\frac{1}{2-2\varepsilon}&\text{if}~p\in(0,\min\{\tfrac23,\frac{1}{\beta}\}],~p\neq \frac{1}{\beta},
\end{cases}
\end{align*}
where $C_{\widehat u,2}$ is defined in Theorems~\ref{RegBoundPDEsolStat}--\ref{parabolicregularitybound}.%
\end{theorem}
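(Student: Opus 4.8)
The plan is to combine the a~priori parametric regularity bounds of Theorems~\ref{RegBoundPDEsolStat} and~\ref{parabolicregularitybound} with the randomly shifted lattice rule error bound of Lemma~\ref{lemma:affineqmc}, and then to choose the weights $(\gamma_{\setu})$ so as to minimize the resulting product of the two $s$-dependent factors. Since the integrand $\widehat u_s$ is $Z$-valued rather than scalar, I would first reduce to the scalar setting in the usual way: because $Z$ is a Hilbert space, the root-mean-square error in $\|\cdot\|_Z$ is controlled by applying the worst-case bound of Lemma~\ref{lemma:affineqmc} to $\langle\widehat u_s(\cdot),\phi\rangle_Z$ uniformly over $\phi$ in the unit ball of $Z$, so that the scalar norm $\|F\|_{\mathcal W_{s,\bsgamma}}$ is replaced throughout by its $Z$-valued analogue. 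The space $\mathcal W_{s,\bsgamma}$ only involves mixed first-order derivatives, i.e.\ multi-indices $\bsnu=\mathbf 1_{\setu}$ with $\setu\subset\{1,\dots,s\}$; evaluating the regularity bound at such $\bsnu$ gives $\|\partial_{\bsy}^{\mathbf 1_{\setu}}\widehat u_s(\cdot,\bsy)\|_Z\le C_{\widehat u,1}C_{\widehat u,2}^{|\setu|}(|\setu|!)^{\beta}\prod_{j\in\setu}b_j$, uniformly in $\bsy\in U$ and, crucially, uniformly in the truncation level $s$ (truncation only annihilates derivatives in directions $j>s$). This uniformity is what will yield an $s$-independent constant.

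Substituting this bound into the $Z$-valued Sobolev norm and then into Lemma~\ref{lemma:affineqmc}, the squared RMS error is bounded, up to the factor $R^{-1}$, by
\[
\Big(\tfrac{2}{n}\sum_{\varnothing\neq\setu}\gamma_{\setu}^{\lambda}\big(\tfrac{2\zeta(2\lambda)}{(2\pi^2)^{\lambda}}\big)^{|\setu|}\Big)^{1/\lambda}\sum_{\setu}\frac{1}{\gamma_{\setu}}\big(C_{\widehat u,1}C_{\widehat u,2}^{|\setu|}(|\setu|!)^{\beta}\boldsymbol b_{\setu}\big)^2,
\]
where $\boldsymbol b_{\setu}=\prod_{j\in\setu}b_j$. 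I would then optimize over $(\gamma_{\setu})$; since the factor $C_{\widehat u,1}^2$ is independent of $\setu$ it passes into the overall constant, and minimizing a product of the form $(\sum\gamma_{\setu}^{\lambda}A_{\setu})^{1/\lambda}\sum B_{\setu}^2/\gamma_{\setu}$ yields $\gamma_{\setu}=(B_{\setu}^2/A_{\setu})^{1/(1+\lambda)}$, which is exactly the stated POD form with $B_{\setu}=C_{\widehat u,2}^{|\setu|}(|\setu|!)^{\beta}\boldsymbol b_{\setu}$ and $A_{\setu}=(2\zeta(2\lambda)/(2\pi^2)^{\lambda})^{|\setu|}$. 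With this choice the two sums coincide and the bound collapses to $(2/n)^{1/\lambda}\big(\sum_{\setu}S_{\setu}\big)^{(1+\lambda)/\lambda}$ with
\[
S_{\setu}=(|\setu|!)^{\beta r}\Big(\tfrac{2\zeta(2\lambda)}{(2\pi^2)^{\lambda}}\Big)^{|\setu|/(1+\lambda)}\boldsymbol b_{\setu}^{\,r},\qquad r:=\frac{2\lambda}{1+\lambda}.
\]
Taking square roots gives the RMS rate $n^{-1/(2\lambda)}$, so the remaining task is to verify $\sum_{\setu\subset\mathbb N}S_{\setu}<\infty$ uniformly in $s$ and to choose $\lambda$ minimal subject to $\lambda\in(1/2,1]$.

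The main obstacle is precisely this summability, because the Gevrey factor $(|\setu|!)^{\beta r}$ grows and must be absorbed by the decay of $\boldsymbol b$. Grouping by cardinality $|\setu|=\ell$ reduces the series to $\sum_{\ell\ge 0}(\ell!)^{\beta r}\,\widetilde a^{\,\ell}\,e_{\ell}(\boldsymbol b^{\,r})$, where $e_\ell$ is the $\ell$-th elementary symmetric polynomial and $\widetilde a=(2\zeta(2\lambda)/(2\pi^2)^{\lambda})^{1/(1+\lambda)}$. Here I would invoke Assumption~\ref{a6} ($\boldsymbol b\in\ell^p$) together with the ordering $b_1\ge b_2\ge\cdots$ to obtain a sharp bound on $e_\ell(\boldsymbol b^{\,r})$: monotone $\ell^p$ decay forces $b_j\lesssim j^{-1/p}$, and a generating-function estimate for the entire function $\prod_{j}(1+x\,b_j^{\,r})$ then gives $e_\ell(\boldsymbol b^{\,r})\lesssim \widetilde C^{\,\ell}(\ell!)^{-r/p}$. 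Consequently the summand behaves like $\widetilde C^{\,\ell}(\ell!)^{\,r(\beta-1/p)}$, which is summable exactly when $\beta<1/p$, i.e.\ when $p<1/\beta$ — the condition built into both regimes of the theorem. The crude bound $e_\ell(\boldsymbol b^{\,r})\le p_1^{\ell}/\ell!$ would only deliver convergence under $\beta r<1$ and is therefore too lossy once $\beta\ge 3/2$; it is the monotonicity-based estimate that makes the Gevrey class compatible with dimension-independent rates.

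Finally, I would select $\lambda$ to make the rate $n^{-1/(2\lambda)}$ as large as possible subject to $\lambda>1/2$ and to summability. When $p>2/3$ one may take $\lambda=p/(2-p)\in(1/2,1]$, for which $r=2\lambda/(1+\lambda)=p$, so that $\sum_j b_j^{\,r}=\sum_j b_j^{\,p}<\infty$ is exactly Assumption~\ref{a6} and $\beta p<1$ secures summability; this yields the rate $1/p-1/2$. When $p\le 2/3$ the optimal unconstrained $\lambda$ would violate $\lambda>1/2$, so I would instead fix $\lambda=1/(2-2\varepsilon)$, giving $r=2/(3-2\varepsilon)>2/3\ge p$ (hence $\boldsymbol b^{\,r}\in\ell^1$, with $\sum_{\setu}S_{\setu}<\infty$ secured by $\beta<1/p$) and the rate $1-\varepsilon$. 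In either case the constant depends only on $\beta,p,\boldsymbol b,C_{\widehat u,1},C_{\widehat u,2}$ and on $R$ through $R^{-1/2}$, but not on $s$; combining the two regimes gives the advertised rate $n^{-\min\{1/p-1/2,\,1-\varepsilon\}}$.
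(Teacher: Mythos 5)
Your proposal is correct and follows the same route the paper takes: the paper's own proof of Theorem~\ref{thm:qmcweight} is literally the single sentence ``carried out in complete analogy to~\cite{kss12}'', and your argument is precisely that analogy made explicit --- duality to reduce the $Z$-valued integrand to scalars, insertion of the $\bsnu=\mathbf 1_{\setu}$ case of Theorems~\ref{RegBoundPDEsolStat}/\ref{parabolicregularitybound} into the norm of Lemma~\ref{lemma:affineqmc}, the standard weight optimization reproducing the stated POD weights, and the choice of $\lambda$ by cases. One point deserves emphasis because it is the only place where ``complete analogy'' is not quite literal: the crude estimate $e_\ell(\boldsymbol b^r)\le(\sum_j b_j^r)^\ell/\ell!$ used in \cite{kss12} yields a summand of order $(\ell!)^{\beta r-1}$, which suffices in the regime $\lambda=p/(2-p)$ (where $r=p$ and $\beta p<1$ is exactly the hypothesis $p<1/\beta$), but fails in the regime $\lambda=1/(2-2\varepsilon)$ once $\beta\ge 3/2$, since there $r=2/(3-2\varepsilon)>2/3$. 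You correctly identify this and supply the sharper bound $e_\ell(\boldsymbol b^r)\lesssim \widetilde C^{\,\ell}(\ell!)^{-r/p}$, obtained from monotonicity ($b_j\lesssim j^{-1/p}$) and the generating function $\prod_j(1+xb_j^r)$, which restores summability under $\beta<1/p$ in full generality; this refinement is genuinely needed for the theorem as stated and is the substantive content hidden behind the paper's one-line proof.
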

\begin{proof} The proof is carried out in complete analogy to~\cite{kss12}.\end{proof}

\subsection{Finite element error}
Let Assumptions \ref{a1}\,--\,\ref{a6} and \ref{a9} hold. For the finite element discretization with respect to the spatial variable $\bsx\in D_{\rm ref}$, we consider  finite element subspaces of $H_0^1(D_\text{ref})$, denoted by $\{V_h\}_{h}$, where $h$ refers to the mesh size of a regular triangulation of $D_{\rm ref}$. As the basis functions we choose continuous piecewise linear functions spanning the finite element spaces. With respect to the temporal variable $t\in I$ in the pullback heat equation, we shall employ an implicit Euler time discretization with time step size $\Delta t>0$.
 For further details, we refer to~\cite{fembook}.

\begin{theorem}\label{thm:5}
\begin{list}{}{\setlength{\leftmargin}{1.3\parindent}\setlength{\labelwidth}{1.3\parindent}\setlength{\labelsep}{0.5em}}
\item[]
\item[\rm (i)] Let $\widehat u_{s,h}(\cdot,\bsy)$, $\bsy\in U$, denote the dimensionally truncated finite element solution to the pullback Poisson equation with finite element mesh width $h>0$. Under Assumptions~\ref{a1}\,--\,\ref{a6} and \ref{a9}, there holds
$$
\|\widehat u_s(\cdot,\bsy)-\widehat u_{s,h}(\cdot,\bsy)\|_{L^2(D_{\rm ref})}\leq C_{\rm FEM}h^2,
$$
where the constant $C_{\rm FEM}>0$ is independent of $s$, $h$, and $\bsy$.
\item[\rm (ii)] Let $\widehat u_{s,h,\Delta t}(\cdot,\cdot,\bsy)$, $\bsy\in U$, denote the dimensionally truncated space-time discretized solution to the pullback heat equation obtained using an implicit Euler time discretization with time step size $\Delta t>0$ and finite element mesh width $h>0$. Under Assumptions~\ref{a1}\,--\,\ref{a6} and \ref{a9}\,--\,\ref{timederivL2}, there holds
$$
\|\widehat u_s(\cdot,\cdot,\bsy)-\widehat u_{s,h,\Delta t}(\cdot,\cdot,\bsy)\|_{L^2(I;L^2(D_{\rm ref}))}\leq C_{\rm FEM}(\Delta t+h^2),
$$
where the constant $C_{\rm FEM}>0$ is independent of $s$, $h$, $\Delta t$, and $\bsy$.
\end{list}
\end{theorem}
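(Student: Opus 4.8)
The plan is to treat the two parts with classical duality-based finite element theory, taking care that every constant depends only on the structural bounds $\sigma_{\min},\sigma_{\max}$ and on $D_{\rm ref}$, hence is uniform in $\bsy\in U$ and in the truncation dimension $s$. The starting observation is that, by Assumption~\ref{a2sv}, the pullback coefficient $A(\cdot,\bsy)=(J^{\rm T}J)^{-1}\det J$ is symmetric with eigenvalues confined to $[\sigma_{\min}^{d}\sigma_{\max}^{-2},\,\sigma_{\max}^{d}\sigma_{\min}^{-2}]$, so the bilinear form $a(\bsy;w,v):=\int_{D_{\rm ref}}(A\nabla w)\cdot\nabla v\,{\rm d}\bsx$ is bounded and coercive on $H_0^1(D_{\rm ref})$ with constants independent of $\bsy$. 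Because the dimensionally truncated field $\boldsymbol V_s$ satisfies Assumptions~\ref{a1}--\ref{assump:gevrey} with the \emph{same} constants (truncation merely fixes the tail of $\bsy$ at zero), all of the bounds below hold verbatim for $\widehat u_s$ with $s$-independent constants.

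For part~(i) I would first invoke elliptic regularity on the convex polyhedron $D_{\rm ref}$ (Assumption~\ref{a9}): since $A(\cdot,\bsy)\in W^{1,\infty}(D_{\rm ref})$ uniformly in $\bsy$ (a consequence of the $\mathcal C^2$-regularity in Assumption~\ref{a1}) and $f_{\rm ref}(\cdot,\bsy)\in L^2(D_{\rm ref})$ uniformly (Assumptions~\ref{a1},~\ref{a4}), the solution satisfies $\widehat u_s(\cdot,\bsy)\in H^2(D_{\rm ref})\cap H_0^1(D_{\rm ref})$ with $\|\widehat u_s(\cdot,\bsy)\|_{H^2}\leq C\|f_{\rm ref}(\cdot,\bsy)\|_{L^2}$, the constant $C$ depending only on the convex domain and the ellipticity bounds. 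C\'ea's lemma together with the standard piecewise-linear interpolation estimate then yields the $H_0^1$ rate $\|\widehat u_s-\widehat u_{s,h}\|_{H_0^1}\leq C h$. To upgrade this to the $L^2$ rate $h^2$ I would run the Aubin--Nitsche duality argument: letting $e:=\widehat u_s-\widehat u_{s,h}$, solve the (symmetric) dual problem $a(\bsy;v,\phi)=(e,v)_{L^2}$ so that $\|\phi\|_{H^2}\leq C\|e\|_{L^2}$ by the same elliptic regularity, and use Galerkin orthogonality to write $\|e\|_{L^2}^2=a(\bsy;e,\phi-I_h\phi)\leq C\|e\|_{H_0^1}\,h\,\|\phi\|_{H^2}$; dividing by $\|e\|_{L^2}$ and inserting the $H_0^1$ rate gives $\|e\|_{L^2}\leq C h^2$ with constant uniform in $\bsy,s,h$.

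For part~(ii) I would combine the spatial estimate of part~(i) with the classical backward-Euler parabolic analysis (see, e.g., \cite{fembook}). Since the spatial operator is time-independent, the Ritz projection $R_h(\bsy)\!:H_0^1(D_{\rm ref})\to V_h$ defined by $a(\bsy;R_h w,v_h)=a(\bsy;w,v_h)$ for all $v_h\in V_h$ is also time-independent, and by part~(i) satisfies $\|w-R_hw\|_{L^2}\leq C h^2\|w\|_{H^2}$. Writing the error as $\widehat u_s-\widehat u_{s,h,\Delta t}=(\widehat u_s-R_h\widehat u_s)+(R_h\widehat u_s-\widehat u_{s,h,\Delta t})$, the first term is controlled in $L^2(I;L^2(D_{\rm ref}))$ by $C h^2$ using the elliptic projection bound and the uniform-in-$\bsy$ parabolic regularity of $\widehat u_s$ (in particular $\widehat u_s,\partial_t\widehat u_s\in L^2(I;H^2(D_{\rm ref}))$). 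The second, fully discrete term is estimated by a discrete energy/stability argument for the weighted scheme, whose time-derivative term carries the uniformly bounded weight $\det J(\cdot,\bsy)\in[\sigma_{\min}^d,\sigma_{\max}^d]$: testing the defect equation against the discrete error and summing over time steps produces the consistency error of backward Euler, which is $O(\Delta t)$ provided $\partial_{tt}\widehat u_s\in L^1(I;L^2(D_{\rm ref}))$, together with an additional $O(h^2)$ contribution from $\partial_t(\widehat u_s-R_h\widehat u_s)$. Adding the two contributions gives the claimed bound $C_{\rm FEM}(\Delta t+h^2)$.

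The main obstacle, and the place where the argument must be made with care, is the parabolic time-regularity needed for part~(ii): the $O(\Delta t)$ backward-Euler rate requires a uniform-in-$\bsy$ bound on $\partial_{tt}\widehat u_s$ in $L^1(I;L^2(D_{\rm ref}))$ (equivalently a uniform second-order time-regularity of the pullback solution), and obtaining such a bound relies not only on the smoothness of the data furnished by Assumptions~\ref{a4} and~\ref{a10} but also on the requisite compatibility of $u_0$ and $f$ at $t=0$, so that no boundary layer in time degrades the rate. Establishing these higher time-regularity estimates with constants independent of $\bsy$ (and thus of $s$)---by differentiating the pullback heat equation in time and applying the uniform elliptic \emph{a priori} bounds---is the crux; once it is in place, the spatial $h^2$ estimate of part~(i) and the standard backward-Euler consistency estimate combine routinely.
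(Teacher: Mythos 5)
Your argument is sound, but it is worth noting that the paper does not actually prove Theorem~\ref{thm:5} at all: its ``proof'' consists of two citations, to \cite{HHKKS24} for the stationary case and to \cite{fempaper} for the parabolic case. What you have written out is, in effect, the standard machinery that those references implement --- uniform ellipticity of $A(\cdot,\bsy)$ from Assumption~\ref{a2sv}, $H^2$-regularity on the convex polyhedron of Assumption~\ref{a9} with $W^{1,\infty}$ coefficients, C\'ea plus Aubin--Nitsche for the $L^2$ rate $h^2$, and the Ritz-projection splitting with a discrete energy argument for backward Euler --- so the routes coincide in substance; yours simply makes the $\bsy$- and $s$-uniformity of every constant explicit, which the paper leaves implicit in the citation. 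The one point you correctly identify as the crux, and which neither you nor the paper discharges, is the uniform-in-$\bsy$ second-order time regularity $\partial_{tt}\widehat u_s\in L^1(I;L^2(D_{\rm ref}))$ needed for the $O(\Delta t)$ consistency of the implicit Euler step: Assumption~\ref{a10} only bounds spatial derivatives of $u_0$ and imposes no compatibility between $u_0$, $f$, and the boundary condition at $t=0$, so a temporal boundary layer could in principle degrade the first-order rate (the $L^2(I;L^2)$ norm in the statement is more forgiving than $L^\infty(I;L^2)$, but the issue does not disappear entirely). Since the paper delegates exactly this point to \cite{fempaper}, your proposal is as complete as the paper's own treatment, and more informative about where the uniformity in $\bsy$ and $s$ actually comes from.
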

\begin{proof} In the stationary case, we refer to~\cite{HHKKS24}. For the time-dependent case, we refer the reader to~\cite{fempaper}.\end{proof}

\subsection{Overall error}
The combination of the aforementioned error estimates, i.e., truncation error, QMC error and finite element error yields an overall error bound for both considered problems.
\Needspace{5\baselineskip}
\begin{theorem}
\begin{list}{}{\setlength{\leftmargin}{1.3\parindent}\setlength{\labelwidth}{1.3\parindent}\setlength{\labelsep}{0.5em}}
\item[]
\item[\rm (i)] Let $\widehat u$ be the solution of the  pullback variational formulation of the Poisson equation~\eqref{eq:weakform}  and let $\widehat u_{s,h}$ denote the dimensionally truncated finite element solution. Under the Assumptions~\ref{a1}\,--\,\ref{a9} we find in the stationary setting
\begin{align*}
    \sqrt{\mathbb{E}_\Delta\|I_s(\widehat u)-Q_{\rm ran}(\widehat u_{s,h})\|_{L^2(D_{\rm ref})}^2}\leq C_{\rm all}\big(s^{-\frac{2}{p}+1}+h^2+n^{-\min\left\{\frac{1}{p}-\frac{1}{2},1-\varepsilon\right\}}\big)
\end{align*}
with $s$ denoting the truncation index, $h$ being the the mesh size used in the finite element discretization and $C_{\rm all}>0$ being a positive constant, which is independent of $s,h$ and the number of lattice points $n$.
\item[\rm (ii)]Let $\widehat u$ denote the solution to the pullback weak formulation of the heat equation~\eqref{spacetimepullback}and let $\widehat u_{s,h,\Delta t}$ denote the dimensionally truncated space-time finite element solution. For the parabolic case one obtains under Assumptions~\ref{a1}\,--\,\ref{timederivL2} the overall error bound
\begin{align*}
    &\sqrt{\mathbb{E}_\Delta\|I_s(\widehat u)-Q_{\rm ran}(\widehat u_{s,h,\Delta t})\|_{L^2(I;L^2(D_{\rm ref}))}^2}\\
    &\leq C_{\rm all}\left(s^{-\frac{2}{p}+1}+h^2+n^{-\min\left\{\frac{1}{p}-\frac{1}{2},1-\varepsilon\right\}}+\Delta t\right),  
\end{align*}
where $\Delta t$ denotes the step size of the implicit Euler scheme for the discretization of the temporal variable and $C_{\rm all}>0$ is a constant, which does not depend on $s,h,n$ and $\Delta t$.
\end{list}\end{theorem}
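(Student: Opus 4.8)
The plan is to decompose the overall error into its three natural sources---dimension truncation, QMC cubature, and finite element (plus time) discretization---by inserting the intermediate quantities $I_s(\widehat u_s)$ and $Q_{\rm ran}(\widehat u_s)$ and applying the triangle inequality. Interpreting $I_s(\widehat u)=\int_U\widehat u(\bsy)\,{\rm d}\bsy$ and noting that $\widehat u_s$ depends only on $(y_1,\dots,y_s)$, so that $\int_U\widehat u_s\,{\rm d}\bsy=I_s(\widehat u_s)$, I would first write for the stationary case
\begin{align*}
I_s(\widehat u)-Q_{\rm ran}(\widehat u_{s,h})
&=\big(I_s(\widehat u)-I_s(\widehat u_s)\big)
+\big(I_s(\widehat u_s)-Q_{\rm ran}(\widehat u_s)\big)\\
&\quad+\big(Q_{\rm ran}(\widehat u_s)-Q_{\rm ran}(\widehat u_{s,h})\big),
\end{align*}
take the $\|\cdot\|_{L^2(D_{\rm ref})}$-norm, and then apply Minkowski's inequality in $L^2$ with respect to the random shift $\boldsymbol\Delta$ (the first term being deterministic). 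This reduces the claim to estimating the three resulting root-mean-square contributions separately.

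Next I would bound each piece by invoking the theorems already established. For the truncation term, since it is independent of $\boldsymbol\Delta$ and $\|v\|_{L^2(D_{\rm ref})}\leq C_{D_{\rm ref}}\|v\|_{H_0^1(D_{\rm ref})}$ by Poincar\'e's inequality, Theorem~\ref{thm:4} yields the rate $s^{-2/p+1}$. For the QMC term, the same Poincar\'e estimate passes from the $L^2(D_{\rm ref})$ to the $H_0^1(D_{\rm ref})$ root-mean-square error controlled by Theorem~\ref{thm:qmcweight}, giving $n^{-\min\{1/p-1/2,\,1-\varepsilon\}}$ with the POD weights specified there. For the finite element term I would use the linearity of the randomly shifted lattice rule, $Q_{\rm ran}(\widehat u_s)-Q_{\rm ran}(\widehat u_{s,h})=Q_{\rm ran}(\widehat u_s-\widehat u_{s,h})$, together with the fact that $Q_{\rm ran}$ is a convex combination of point evaluations; since Theorem~\ref{thm:5}(i) bounds $\|\widehat u_s(\cdot,\bsy)-\widehat u_{s,h}(\cdot,\bsy)\|_{L^2(D_{\rm ref})}\leq C_{\rm FEM}h^2$ \emph{uniformly} in $\bsy\in U$, this bound survives both the averaging and the shift expectation, contributing $C_{\rm FEM}h^2$. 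Summing the three estimates and absorbing the constants into a single $C_{\rm all}$ would complete part~(i).

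For the parabolic case~(ii) I would run the identical decomposition with $\widehat u_{s,h}$ replaced by $\widehat u_{s,h,\Delta t}$ and the spatial norm replaced by $\|\cdot\|_{L^2(I;L^2(D_{\rm ref}))}$. The truncation and QMC terms would be controlled by Theorems~\ref{thm:4} and~\ref{thm:qmcweight} in the norm $L^2(I;H_0^1(D_{\rm ref}))$, passing to $L^2(I;L^2(D_{\rm ref}))$ by applying Poincar\'e's inequality pointwise in $t\in I$ and integrating over $I$; the finite element term would again use the uniform-in-$\bsy$ bound $\|\widehat u_s-\widehat u_{s,h,\Delta t}\|_{L^2(I;L^2(D_{\rm ref}))}\leq C_{\rm FEM}(\Delta t+h^2)$ from Theorem~\ref{thm:5}(ii), which furnishes the additional $\Delta t$ term. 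I expect the only genuinely delicate point to be the treatment of the finite element contribution: one must simultaneously exploit the linearity of $Q_{\rm ran}$ and the uniformity in $\bsy$ of the discretization bound so that averaging over the lattice points and random shifts does not degrade the $O(h^2)$ (respectively $O(\Delta t+h^2)$) rate. The remaining two contributions follow directly from the quoted theorems, with Poincar\'e's inequality needed only to reconcile the $H_0^1$-type norms in which those bounds are stated with the weaker $L^2$-type norms appearing in the present statement.
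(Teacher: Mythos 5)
Your decomposition---inserting $I_s(\widehat u_s)$ and $Q_{\rm ran}(\widehat u_s)$, applying the triangle/Minkowski inequality in the shift expectation, invoking Theorems~\ref{thm:4}, \ref{thm:qmcweight}, and~\ref{thm:5} for the three pieces, and using Poincar\'e's inequality to reconcile the $H_0^1$-type norms of the first two theorems with the $L^2$-type norm of the statement---is exactly the argument the paper intends; its own proof consists of the single sentence ``The claim follows by combining Theorems~\ref{thm:4}--\ref{thm:5}.'' Your write-up is a correct and more explicit rendering of that same combination, including the observation that the uniform-in-$\bsy$ finite element bound survives the averaging in $Q_{\rm ran}$.
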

\begin{proof}
The claim follows by combining Theorems~\ref{thm:4}--\ref{thm:5}.
\end{proof}
\section{Numerical Experiments}\label{sec:numex}
In this section, we consider a series of numerical experiments to approximate the expected values of solutions to the Poisson equation and the heat equation subject to domain uncertainty using QMC. For the numerical experiments, we fix the reference domain
$$
D_{\rm ref}=\{\bsx\in\mathbb R^2\mid x_1^2+x_2^2\leq 1\}\subset\mathbb R^2
$$
and consider domains parameterized by $D(\bsy)=\boldsymbol V(D_{\rm ref},\bsy)$ for $\bsy\in U_s$, where
\begin{align*}
&\boldsymbol V(\bsx,\bsy)=\bigg[1+\xi\bigg(\sum_{j=1}^s y_j\psi_j(\bsx)\bigg)\bigg]\bsx,\\
&\psi_j(\bsx)=j^{-\vartheta}\sin\big(3j({\rm atan2}(x_1,x_2)+\pi)\big),\quad j\geq 1,
\end{align*}
for $\bsx\in D_{\rm ref}$ and $\bsy\in U_s$.  We set $s=100$ as the truncation dimension and consider the following four experiments:
\begin{list}{{\rm (E\arabic{exampleenum})}}{\usecounter{exampleenum}%
  \setlength{\leftmargin}{1.7\parindent}%
  \setlength{\labelwidth}{1.7\parindent}%
  \setlength{\labelsep}{0.5em}%
  \renewcommand{\makelabel}[1]{\hfill #1}}
\item $\xi(y)=y$ and $\vartheta=2.1$;\label{eq:ex1}
\item $\xi(y)=\exp\big(-\frac{1}{y+\zeta(\vartheta)/2}\big)$ and $\vartheta=2.1$;\label{eq:ex2}
\item $\xi(y)=y$ and $\vartheta=2.5$;\label{eq:ex3}
\item $\xi(y)=\exp\big(-\frac{1}{y+\zeta(\vartheta)/2}\big)$ and $\vartheta=2.5$.\label{eq:ex4}
\end{list}
Experiments~\ref{eq:ex1} and~\ref{eq:ex3} correspond to vectorized Karhunen--Lo\`eve expansions and are thereby covered not only by our theory, but also by the existing results presented in~\cite{HPS16}. Meanwhile, experiments~\ref{eq:ex2} and~\ref{eq:ex4} do not produce domain mappings which are holomorphic with respect to the parametric variable $\bsy$, but they are covered by the theory developed for Gevrey regular domain mappings in the present work. %
Some realizations of the parametric domains corresponding to experiments~\ref{eq:ex1} and~\ref{eq:ex2} are illustrated in Figure~\ref{fig:realiz} for randomly generated $\bsy\in U_s$ alongside the finite element discretization used for the reference domain $D_{\rm ref}$. The same finite element mesh is used for all experiments. In particular, we solve the pullback weak formulation of the Poisson equation and the heat equation using a first-order finite element mesh with mesh width $h=0.1$. Meanwhile, the pullback heat equation is solved by first discretizing the temporal variable using the implicit Euler method with time step size $\Delta t=0.1$. The heat equation was solved over the time interval $I=[0,1]$ subject to the initial heat distribution $u_0\equiv 1$.
\begin{figure}[!t]
\centering
\subfloat{
\begin{tikzpicture}
\node (img) {\includegraphics[height=.275\textwidth]{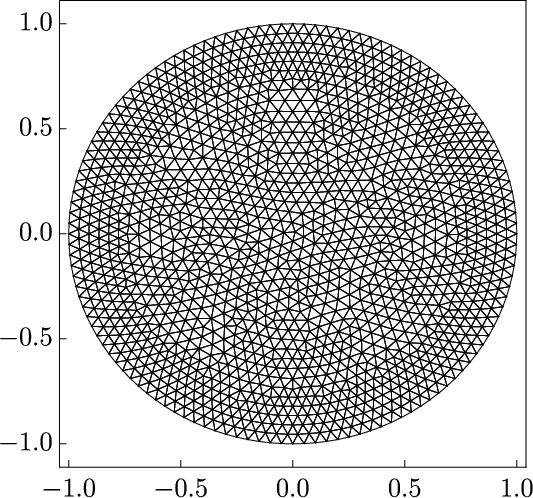}};
\node[below=of img,node distance=0cm,xshift=.2cm,yshift=1cm]{$x_1$};
\node[left=of img,node distance=0cm,rotate=90,anchor=center,xshift=.2cm,yshift=-1.0cm]{$x_2$};
\end{tikzpicture}}\hspace*{-.2cm}
\subfloat{
\begin{tikzpicture}
\node (img) {\includegraphics[height=.275\textwidth]{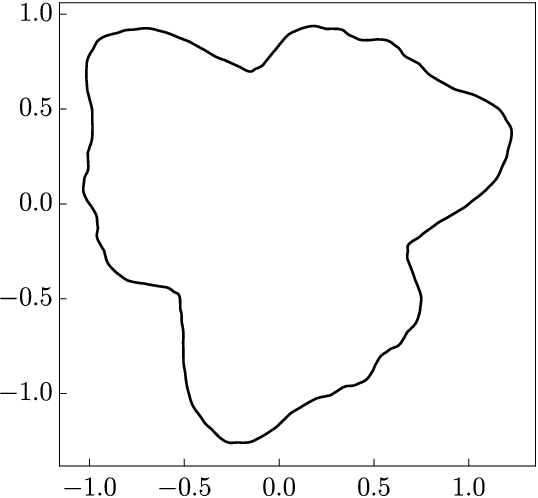}};
\node[below=of img,node distance=0cm,xshift=.2cm,yshift=1cm]{$x_1$};
\node[left=of img,node distance=0cm,rotate=90,anchor=center,xshift=.2cm,yshift=-1.0cm]{$x_2$};
\end{tikzpicture}}\hspace*{-.2cm}
\subfloat{
\begin{tikzpicture}
\node (img) {\includegraphics[height=.275\textwidth]{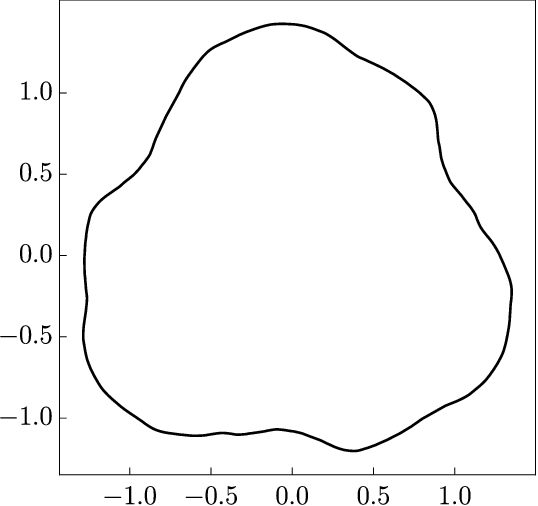}};
\node[below=of img,node distance=0cm,xshift=.2cm,yshift=1cm]{$x_1$};
\node[left=of img,node distance=0cm,rotate=90,anchor=center,xshift=.2cm,yshift=-1.0cm]{$x_2$};
\end{tikzpicture}}
\caption{Left: the triangulation of the reference domain $D_{\rm ref}$ used in all numerical experiments. Middle: a random realization of the random field corresponding to experiment~\ref{eq:ex1}. Right: a random realization of the random field corresponding to experiment~\ref{eq:ex2}.}\label{fig:realiz}
\end{figure}

\begin{figure}[!t]
\centering
\subfloat{
\begin{tikzpicture}
\node (img) {\includegraphics[height=.35\textwidth]{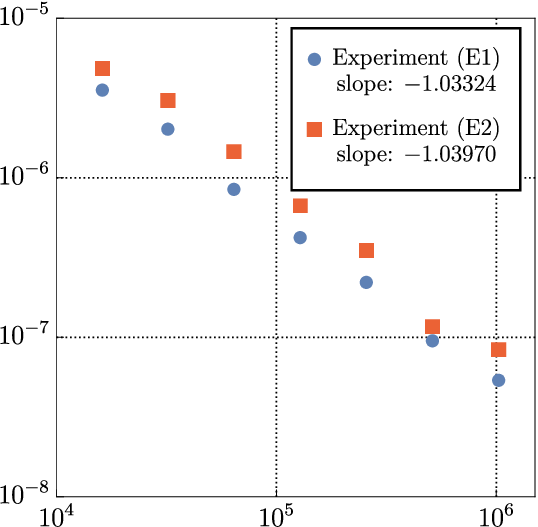}};
\node[below=of img,node distance=0cm,xshift=.2cm,yshift=1cm]{number of function evaluations $nR$};
\node[left=of img,node distance=0cm,rotate=90,anchor=center,xshift=.2cm,yshift=-.7cm]{R.M.S.~error};
\end{tikzpicture}}\hspace*{.2cm}
\subfloat{
\begin{tikzpicture}
\node (img) {\includegraphics[height=.35\textwidth]{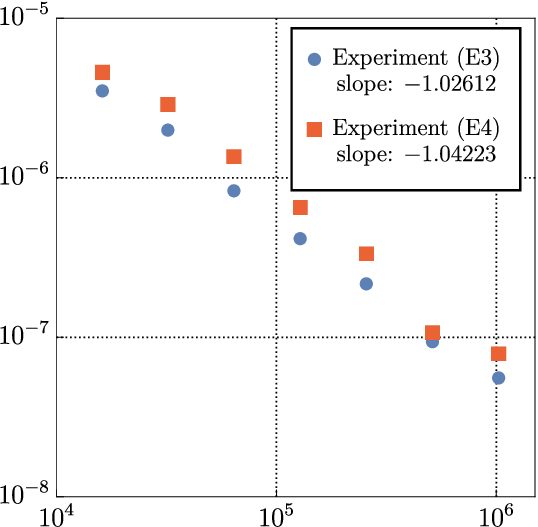}};
\node[below=of img,node distance=0cm,xshift=.2cm,yshift=1cm]{number of function evaluations $nR$};
\node[left=of img,node distance=0cm,rotate=90,anchor=center,xshift=.2cm,yshift=-.7cm]{R.M.S.~error};
\end{tikzpicture}}
\caption{The computed $L^2(D_{\rm ref})$ errors for the Poisson equation. Left: the numerical cubature errors corresponding to experiments~\ref{eq:ex1}--\ref{eq:ex2}. Right: the numerical cubature errors corresponding to experiments~\ref{eq:ex3}--\ref{eq:ex4}.}\label{fig:results}
%
\subfloat{
\begin{tikzpicture}
\node (img) {\includegraphics[height=.35\textwidth]{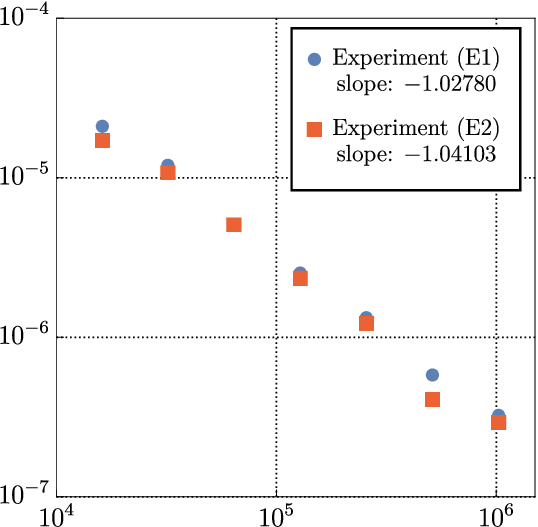}};
\node[below=of img,node distance=0cm,xshift=.2cm,yshift=1cm]{number of function evaluations $nR$};
\node[left=of img,node distance=0cm,rotate=90,anchor=center,xshift=.2cm,yshift=-.7cm]{R.M.S.~error};
\end{tikzpicture}}\hspace*{.2cm}
\subfloat{
\begin{tikzpicture}
\node (img) {\includegraphics[height=.35\textwidth]{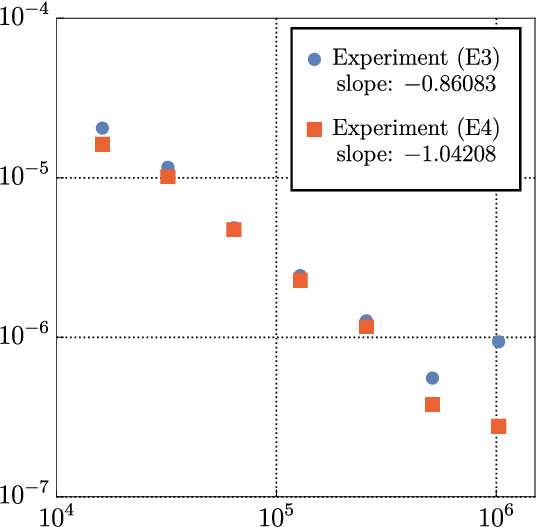}};
\node[below=of img,node distance=0cm,xshift=.2cm,yshift=1cm]{number of function evaluations $nR$};
\node[left=of img,node distance=0cm,rotate=90,anchor=center,xshift=.2cm,yshift=-.7cm]{R.M.S.~error};
\end{tikzpicture}}
\caption{The computed $H_0^1(D_{\rm ref})$ errors for the Poisson equation. Left: the numerical cubature errors corresponding to experiments~\ref{eq:ex1}--\ref{eq:ex2}. Right: the numerical cubature errors corresponding to experiments~\ref{eq:ex3}--\ref{eq:ex4}.}\label{fig:resultsb}
\end{figure}

It is straightforward to verify that
$$
\|\partial_{\bsy}^{\bsnu}\boldsymbol V(\cdot,\bsy)\|_{W^{1,\infty}(D_{\rm ref})}\lesssim \begin{cases}
|\bsnu|!\boldsymbol b^{\bsnu}&\text{in experiments~\ref{eq:ex1} and~\ref{eq:ex3}},\\
(|\bsnu|!)^2 \boldsymbol b^{\bsnu}&\text{in experiments~\ref{eq:ex2} and~\ref{eq:ex4}},
\end{cases}
$$
where $\boldsymbol b=(b_j)_{j\geq 1}$ is defined by $b_j=\|\psi_j\|_{W^{1,\infty}(D_{\rm ref})}\propto j^{1-\vartheta}$. For numerical stability, we set $C_{\widehat u,2}=1$ in the expression for the QMC weights derived in Theorem~\ref{thm:qmcweight}  and use the QMC4PDE software~\citep{qmc4pde} to obtain the generating vector for $\vartheta\in\{2.1,2.5\}$. For the approximation of $\mathbb E[\widehat u]$, we use $R=16$ random shifts. The root-mean-square (R.M.S.) errors are approximated using the formula
$$
\sqrt{\frac{1}{R(R-1)}\sum_{r=1}^R \|Q_{\rm ran}(\widehat u)-Q_r(\widehat u)\|_{\mathcal H}^2},
$$
where we used both $\mathcal H=H_0^1(D_{\rm ref})$ and $\mathcal H=L^2(D_{\rm ref})$ for the solution to the Poisson equation and $\mathcal H=\mathcal L^2$ and $\mathcal H=L^2(I;H_0^1(D_{\rm ref}))$ for the heat equation. Since the Poincar\'e inequality can be used to bound $\|\cdot\|_{L^2}$ and $\|\cdot\|_{\mathcal L^2}$ by $\|\cdot\|_{H_0^1}$ and $\|\cdot\|_{L^2(I;H_0^1(D_{\rm ref}))}$, respectively, we expect the same rates of QMC convergence in all experiments.
The calculated R.M.S.~errors are displayed in Figures~\ref{fig:results}--\ref{fig:resultsb} for the Poisson problem and in Figures~\ref{fig:results2}--\ref{fig:results2b} for the heat equation corresponding to the two different norms. In all cases, we observe that the R.M.S.~error decays at an essentially linear rate $\mathcal O(n^{-1})$, as proved in Theorem~\ref{thm:qmcweight}. However, since the $L^2$ and $\mathcal L^2$ norms are weaker, the magnitudes of the computed errors are smaller than the errors measured using Sobolev norms. In addition, we note that the $H_0^1$ norm appears to be sensitive to numerical discretization errors, resulting in a loss of accuracy in the R.M.S.~error below a certain threshold as displayed in Figure~\ref{fig:resultsb}.

\begin{figure}[!t]
\centering
\subfloat{
\begin{tikzpicture}
\node (img) {\includegraphics[height=.35\textwidth]{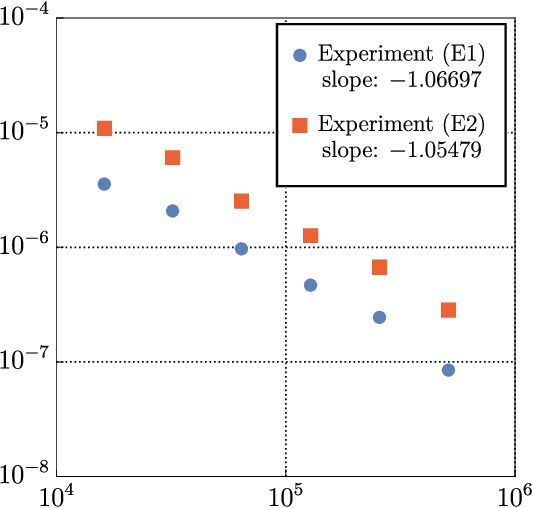}};
\node[below=of img,node distance=0cm,xshift=.2cm,yshift=1cm]{number of function evaluations $nR$};
\node[left=of img,node distance=0cm,rotate=90,anchor=center,xshift=.2cm,yshift=-.7cm]{R.M.S.~error};
\end{tikzpicture}}\hspace*{.2cm}
\subfloat{
\begin{tikzpicture}
\node (img) {\includegraphics[height=.35\textwidth]{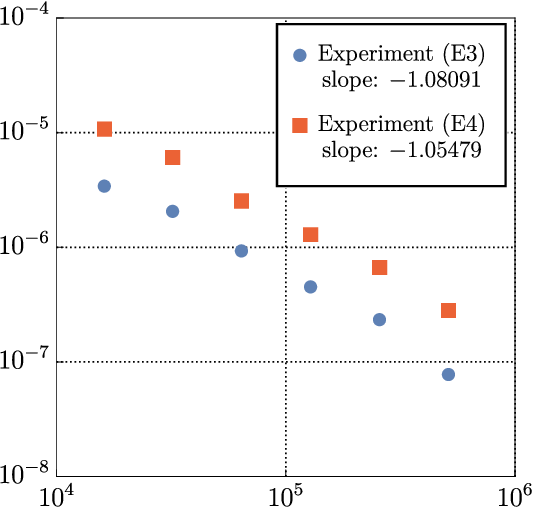}};
\node[below=of img,node distance=0cm,xshift=.2cm,yshift=1cm]{number of function evaluations $nR$};
\node[left=of img,node distance=0cm,rotate=90,anchor=center,xshift=.2cm,yshift=-.7cm]{R.M.S.~error};
\end{tikzpicture}}
\caption{The computed  $L^2(I;L^2(D_{\rm ref}))$ errors for the heat equation. Left: the numerical cubature errors corresponding to experiments~\ref{eq:ex1}--\ref{eq:ex2}. Right: the numerical cubature errors corresponding to experiments~\ref{eq:ex3}--\ref{eq:ex4}.}\label{fig:results2}
%
\subfloat{
\begin{tikzpicture}
\node (img) {\includegraphics[height=.35\textwidth]{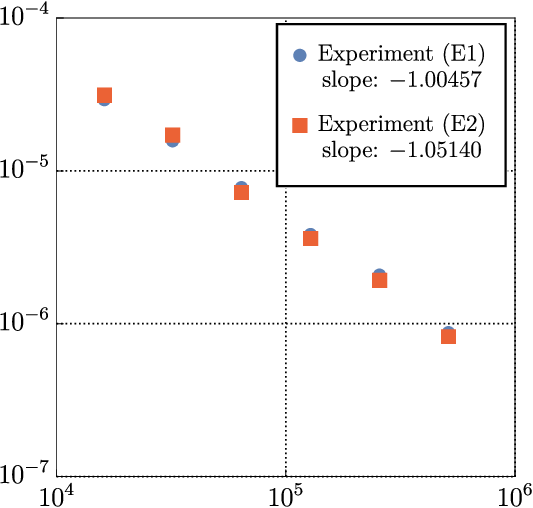}};
\node[below=of img,node distance=0cm,xshift=.2cm,yshift=1cm]{number of function evaluations $nR$};
\node[left=of img,node distance=0cm,rotate=90,anchor=center,xshift=.2cm,yshift=-.7cm]{R.M.S.~error};
\end{tikzpicture}}\hspace*{.2cm}
\subfloat{
\begin{tikzpicture}
\node (img) {\includegraphics[height=.35\textwidth]{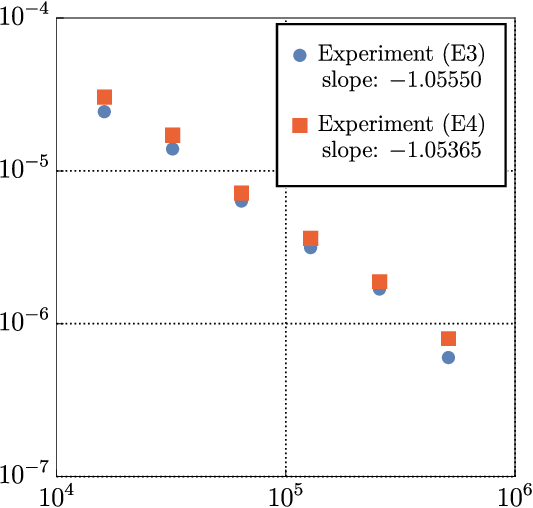}};
\node[below=of img,node distance=0cm,xshift=.2cm,yshift=1cm]{number of function evaluations $nR$};
\node[left=of img,node distance=0cm,rotate=90,anchor=center,xshift=.2cm,yshift=-.7cm]{R.M.S.~error};
\end{tikzpicture}}
\caption{The computed $L^2(I;H_0^1(D_{\rm ref}))$ errors for the heat equation. Left: the numerical cubature errors corresponding to experiments~\ref{eq:ex1}--\ref{eq:ex2}. Right: the numerical cubature errors corresponding to experiments~\ref{eq:ex3}--\ref{eq:ex4}.}\label{fig:results2b}
\end{figure}

\section{Conclusions}\label{sec:conclusions}

In this work, we have investigated uncertainty quantification for random domains subject to the Poisson equation and the heat equation. Specifically, we find that modeling the input random fields as Gevrey regular vector fields leads to faster-than-Monte Carlo convergence rates for the computation of the statistical response when using randomly shifted rank-1 lattice rules. The Gevrey model accommodates more general parameterizations of domain uncertainty: while the previous literature is mainly concerned with holomorphic transformations of a fixed reference domain, the Gevrey class also covers non-holomorphic representations. 

For the construction of the QMC rules, we performed a detailed parametric regularity analysis informing the choice of optimized QMC point sets. In addition to the numerical cubature error, we also analyzed the approximation errors stemming from dimension truncation and finite element discretization.  We remark that our QMC analysis is carried out in a general manner and thus independent of the chosen PDE discretization scheme. Some potential future directions include extending QMC analysis for curved or time dependent domains as well as forward and inverse domain uncertainty quantification governed by more involved elliptic or parabolic PDEs.

\section*{Acknowledgements}
ADj and CS acknowledge support from DFG CRC/TRR 388 ``Rough
Analysis, Stochastic Dynamics and Related Fields'', Project B06. AZ is grateful for travel funding provided by BMS and DAAD making it possible to present some of the produced results. The work of VK was supported by the Research Council of Finland (Flagship of Advanced Mathematics for Sensing, Imaging and Modelling grant 359183). The authors wish to acknowledge CSC -- IT Center for Science, Finland, for computational resources.


\section*{Appendix}\label{sec:appendix}
This section covers results on technical recurrence relations and PDE regularity used in our analysis.

Let
\begin{align}
\tau_{0,\beta,q}=1\quad\text{and}\quad \tau_{k,\beta,q}=\sum_{\ell=0}^{k-1}\frac{((k-\ell+q)!)^\beta}{((k-\ell)!)^\beta}\tau_{\ell,\beta,q},\quad k\geq 1.\label{eq:tauseq}
\end{align}
This sequence is a generalization of sequence A003480 in \emph{The On-Line Encyclopedia of Integer Sequences (OEIS)}.

The sequence~\eqref{eq:tauseq} appears in the parametric regularity analysis developed in Section~\ref{sec:paramreg} via the following abstract recursive bound.

\begin{lemma}\label{lemma:taubnd} Let $(\Upsilon_\bsnu)_{\bsnu\in\mathscr F}$ be a sequence defined by the recurrence relation
$$
\Upsilon_{\mathbf 0}=1\quad\text{and}\quad\Upsilon_{\boldsymbol m}=\sum_{\substack{\boldsymbol w\leq \boldsymbol m\\ \boldsymbol w\neq\mathbf 0}}\binom{\boldsymbol m}{\boldsymbol w}((|\boldsymbol w|+q)!)^{\beta}\Upsilon_{\boldsymbol m-\boldsymbol w},\quad \boldsymbol m\in\mathscr F\setminus\{\mathbf 0\}.
$$
Then there holds
$$
\Upsilon_{\bsnu}\leq \tau_{|\bsnu|,\beta,q}(|\bsnu|!)^\beta\quad\text{for all}~\bsnu\in\mathscr F,
$$
where the sequence $(\tau_{k,\beta,q})_{k\geq 0}$ is defined by~\eqref{eq:tauseq}.
\end{lemma}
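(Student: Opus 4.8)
The plan is to argue by induction on the total order $|\bsnu|$, exactly as in the proofs of the preceding lemmata (e.g., the lemma bounding $\Lambda_{\bsnu}$). The base case $\bsnu=\mathbf 0$ is immediate, since $\Upsilon_{\mathbf 0}=1=\tau_{0,\beta,q}(0!)^\beta$. For the inductive step I would fix $\bsnu\in\mathscr F\setminus\{\mathbf 0\}$, assume the claimed bound for every multi-index of order strictly less than $|\bsnu|$, and substitute the hypothesis $\Upsilon_{\bsnu-\boldsymbol w}\leq\tau_{|\bsnu|-|\boldsymbol w|,\beta,q}((|\bsnu|-|\boldsymbol w|)!)^\beta$ into the defining recurrence. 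The crucial observation---already exploited in Step~1---is that both $\tau_{|\bsnu|-|\boldsymbol w|,\beta,q}$ and $((|\bsnu|-|\boldsymbol w|)!)^\beta$ depend on $\boldsymbol w$ only through $|\boldsymbol w|$, so after grouping the summands according to $\ell:=|\boldsymbol w|$ the inner sum over $\{\boldsymbol w\leq\bsnu:|\boldsymbol w|=\ell\}$ is precisely the Vandermonde convolution $\sum_{\boldsymbol w\leq\bsnu,\,|\boldsymbol w|=\ell}\binom{\bsnu}{\boldsymbol w}=\binom{|\bsnu|}{\ell}$. This collapses the multidimensional recurrence to a one-dimensional sum.

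Writing $k:=|\bsnu|$ and reindexing by $r:=k-\ell$, I expect to arrive at
\begin{align*}
\Upsilon_{\bsnu}\leq\sum_{r=0}^{k-1}\binom{k}{r}((k-r+q)!)^\beta(r!)^\beta\,\tau_{r,\beta,q}.
\end{align*}
The target quantity $\tau_{k,\beta,q}(k!)^\beta$ expands, via the definition~\eqref{eq:tauseq}, to $\sum_{r=0}^{k-1}\frac{((k-r+q)!)^\beta}{((k-r)!)^\beta}(k!)^\beta\tau_{r,\beta,q}$. Hence it suffices to compare the two sums term by term in $r$: cancelling the common positive factors $((k-r+q)!)^\beta\tau_{r,\beta,q}$, the desired inequality reduces to $\binom{k}{r}(r!)^\beta\leq (k!)^\beta/((k-r)!)^\beta$, which rearranges to $(r!\,(k-r)!)^{\beta-1}\leq (k!)^{\beta-1}$.

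The only place where the assumption $\beta\geq 1$ enters---and the one mild point to get right---is this final inequality. Since $r!\,(k-r)!\leq k!$ (equivalently $\binom{k}{r}\geq 1$) and $\beta-1\geq 0$, raising to the power $\beta-1$ preserves the inequality, which closes the induction. I do not anticipate any serious obstacle: the argument is a standard ``collapse to the diagonal'' reduction, and the single inequality $(r!\,(k-r)!/k!)^{\beta-1}\leq 1$ encapsulates the entire role of the Gevrey exponent being at least one. The remaining manipulations---the substitution of the inductive hypothesis, the Vandermonde step, and the reindexing---are routine once this term-by-term comparison is set up correctly.
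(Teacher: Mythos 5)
Your proof is correct and follows essentially the same route as the paper's: induction on $|\bsnu|$, substitution of the inductive hypothesis, collapse of the multi-index sum via the Vandermonde convolution, and the single use of $\beta\geq 1$ through the inequality $\binom{k}{r}(r!)^\beta((k-r)!)^\beta\leq (k!)^\beta$ (the paper phrases this as $\binom{|\bsnu|}{\ell}\leq\binom{|\bsnu|}{\ell}^\beta$, which is the same fact). The only cosmetic difference is that the paper reindexes $\boldsymbol w\mapsto\bsnu-\boldsymbol w$ before collapsing to the scalar sum, whereas you reindex afterwards.
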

\begin{proof} We prove the claim by induction with respect to the order of the multi-index $\bsnu$. The claim is clearly true if $\bsnu=\mathbf 0$. We fix $\bsnu\in\mathscr F\setminus\{\mathbf 0\}$ and assume that the claim holds for all multi-indices with order less than $|\bsnu|$. Then there holds
\begin{align*}
 \Upsilon_\bsnu &=\sum_{\substack{\boldsymbol w\leq \bsnu\\ \boldsymbol w\neq\mathbf 0}}\binom{\bsnu}{\boldsymbol{w}}((|\boldsymbol{w}|+q)!)^\beta\Upsilon_{\bsnu-\boldsymbol{w}}\\
&=\sum_{\substack{\boldsymbol w\leq \bsnu \\ \boldsymbol w\neq\bsnu}}\binom{\bsnu}{\boldsymbol{w}}((|\bsnu|-|\boldsymbol{w}|+q)!)^\beta\Upsilon_{\boldsymbol{w}}\\
&\leq \sum_{\substack{\boldsymbol w\leq \bsnu \\ \boldsymbol w\neq\bsnu}}\binom{\bsnu}{\boldsymbol{w}}((|\bsnu|-|\boldsymbol{w}|+q)!)^\beta \tau_{|\boldsymbol{w}|,\beta,q}(|\boldsymbol{w}|!)^\beta\\
&= \sum_{\ell=0}^{|\bsnu|-1}((|\bsnu|-\ell+q)!)^\beta\tau_{\ell,\beta,q}(\ell!)^\beta\sum_{\substack{\boldsymbol w\leq \bsnu \\ |\boldsymbol w|=\ell}}\binom{\bsnu}{\boldsymbol{w}}\\
&\leq \sum_{\ell=0}^{|\bsnu|-1}((|\bsnu|-\ell+q)!)^\beta\tau_{\ell,\beta,q}(\ell!)^\beta \bigg(\sum_{\substack{\boldsymbol w\leq \bsnu \\ |\boldsymbol w|=\ell}}\binom{\bsnu}{\boldsymbol{w}}\bigg)^\beta\\
&\leq (|\bsnu|!)^\beta \sum_{\ell=0}^{|\bsnu|-1}((|\bsnu|-\ell+q)!)^\beta\tau_{\ell,\beta,q}=(|\bsnu|!)^\beta\tau_{|\bsnu|,\beta,q},
\end{align*}
which proves the statement. \end{proof}

We derive an explicit bound on $\tau_{|\bsnu|,\beta,d}$.

\begin{lemma}\label{lemma:taubound}
There holds
$$\tau_{k,\beta,q}\leq (q!)^{\beta k}2^{\beta (q+1)k}\max\{1,2^{k-1}\}.$$
\end{lemma}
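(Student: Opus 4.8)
The plan is to prove the bound by induction on $k$. First I would rewrite the defining recursion~\eqref{eq:tauseq} in the more convenient form
\begin{equation*}
\tau_{k,\beta,q}=\sum_{m=1}^{k}\bigg(\frac{(m+q)!}{m!}\bigg)^{\beta}\tau_{k-m,\beta,q},
\end{equation*}
obtained by substituting $m=k-\ell$. The base case $k=0$ is immediate, since $\tau_{0,\beta,q}=1$ and the claimed bound evaluates to $1$. The inductive step below then covers all $k\ge 1$; in particular, for $k=1$ it reduces to the elementary inequality $(q+1)!\le q!\,2^{q+1}$.

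For the inductive step, the key elementary estimate I would apply to the coefficients is
\begin{equation*}
\frac{(m+q)!}{m!}=\binom{m+q}{q}q!\le 2^{m+q}q!,
\end{equation*}
which follows from $\binom{m+q}{q}\le 2^{m+q}$. Raising to the power $\beta$ and inserting the induction hypothesis $\tau_{k-m,\beta,q}\le (q!)^{\beta(k-m)}2^{\beta(q+1)(k-m)}\max\{1,2^{k-m-1}\}$, each summand acquires a factor $(q!)^{\beta(k-m+1)}$, which I would bound by $(q!)^{\beta k}$ using $m\ge 1$ and $q!\ge 1$. The careful bookkeeping of these factorial powers is the first place where the argument could go astray, and it is precisely the binomial bound above that makes the powers of $q!$ close up correctly; a cruder estimate of $(m+q)!/m!$ would spoil the alignment.

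After factoring out $(q!)^{\beta k}$, I would collect the powers of $2$: the exponent $m+q+(q+1)(k-m)$ simplifies to $(q+1)k+q(1-m)$, so that $2^{\beta(q+1)k}$ factors out and leaves a residual factor $2^{\beta q(1-m)}\le 1$ for each $m\ge 1$. The remaining task is therefore to verify
\begin{equation*}
\sum_{m=1}^{k}\max\{1,2^{k-m-1}\}=\max\{1,2^{k-1}\},
\end{equation*}
which I would establish by splitting off the two terms $m\in\{k-1,k\}$ (each equal to $1$) and summing the resulting geometric series $\sum_{i=1}^{k-2}2^{i}=2^{k-1}-2$. I expect this final identity to be the main obstacle, in the sense that the estimate is tight here: dropping the residual factor $2^{\beta q(1-m)}$ and evaluating the max-sum reproduces exactly $\max\{1,2^{k-1}\}$ with no slack, so the step must be carried out as an exact evaluation rather than a lossy over-estimate.
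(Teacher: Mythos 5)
Your proof is correct and follows essentially the same route as the paper's: the same induction on $k$, the same key estimate $\frac{(m+q)!}{m!}=q!\binom{m+q}{q}\le q!\,2^{m+q}$, and the same exact evaluation of the geometric sum $\sum_{m=1}^{k}\max\{1,2^{k-m-1}\}=2^{k-1}$ (the paper writes it as $\sum_{\ell=0}^{k-1}\max\{1,2^{\ell-1}\}$ under the substitution $\ell=k-m$). The only cosmetic difference is your reindexing of the recursion and the slightly more explicit tracking of the exponent $(q+1)k+q(1-m)$.
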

\begin{proof}
We let $q\in\mathbb Z_+$ be arbitrary and prove the claim by induction with respect to $k\in\mathbb Z_+$. The basis of induction $k=0$ follows from
$$
\tau_{0,\beta,q}=1.
$$
Next, we fix $k\geq 1$ and assume that the claim is true for all values up to but not including $k$. Then, by utilizing the fact that $\frac{(k-\ell+q)!}{(k-\ell)!}=q!\binom{k-\ell+q}{q}\leq q!2^{k-\ell+q}$, we obtain that
\begin{align*}
\tau_{k,\beta,q}&=\sum_{\ell=0}^{k-1}\frac{((k-\ell+q)!)^\beta}{((k-\ell)!)^\beta}\tau_{\ell,\beta,q}\\
&\leq \sum_{\ell=0}^{k-1}(q!)^\beta 2^{\beta (k-\ell+q)}(q!)^{\beta \ell} 2^{\beta (q+1)\ell}\max\{1,2^{\ell-1}\}\\
&\leq (q!)^{\beta k} 2^{\beta k+\beta q+\beta q(k-1)}\sum_{\ell=0}^{k-1}\max\{1,2^{\ell-1}\}\\
&=(q!)^{\beta k} 2^{\beta (q+1)k}2^{k-1}
\end{align*}
as desired.
\end{proof}

\begin{lemma}\label{lemma:xi}
Let $(\xi_\bsnu)_{\bsnu\in\mathscr{F}}$ be a sequence satisfying the recurrence relation~\eqref{eq:xirecu1}--\eqref{eq:xirecu2} with $C\geq 1$. Then, there holds
\begin{align*}
    \xi_\bsnu \leq \sigma_{\min}^{-|\bsnu|-1} C^{|\bsnu|}(|\bsnu|!)^\beta\max\{1,2^{|\bsnu|-1}\} \boldsymbol{b}^\bsnu \enspace \text{for all}\enspace \bsnu\in \mathscr{F}.
\end{align*}
\end{lemma}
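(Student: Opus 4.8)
The plan is to prove the bound by induction on the order $|\bsnu|$ of the multi-index, exactly in the spirit of the earlier recurrence estimates (for instance the lemma bounding $\Lambda_{\bsnu}$). The base case $\bsnu=\mathbf 0$ is immediate: \eqref{eq:xirecu1} gives $\xi_{\mathbf 0}\le\sigma_{\min}^{-1}$, which matches the claimed bound since $C^0=(0!)^\beta=\boldsymbol b^{\mathbf 0}=\max\{1,2^{-1}\}=1$.

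For the inductive step, I would fix $\bsnu\in\mathscr F\setminus\{\mathbf 0\}$ and assume the bound holds for every multi-index of order strictly less than $|\bsnu|$. Substituting the inductive hypothesis for $\xi_{\bsnu-\boldsymbol m}$ into \eqref{eq:xirecu2}, using $|\bsnu-\boldsymbol m|=|\bsnu|-|\boldsymbol m|$, and collecting the factors $\boldsymbol b^{\boldsymbol m}\boldsymbol b^{\bsnu-\boldsymbol m}=\boldsymbol b^{\bsnu}$ yields, after separating the constants,
\begin{align*}
\xi_{\bsnu}\le\sum_{\mathbf 0\neq\boldsymbol m\le\bsnu}\binom{\bsnu}{\boldsymbol m}\sigma_{\min}^{-|\bsnu|+|\boldsymbol m|-2}C^{|\bsnu|-|\boldsymbol m|+1}(|\boldsymbol m|!)^\beta((|\bsnu|-|\boldsymbol m|)!)^\beta\max\{1,2^{|\bsnu|-|\boldsymbol m|-1}\}\boldsymbol b^{\bsnu}.
\end{align*}
Since each admissible $\boldsymbol m$ satisfies $|\boldsymbol m|\ge1$, the stray exponents can be absorbed into the target powers using the monotonicity facts $C\ge1$ and $\sigma_{\min}\le1$ (the latter by Assumption~\ref{a2sv}): indeed $C^{|\bsnu|-|\boldsymbol m|+1}\le C^{|\bsnu|}$ and $\sigma_{\min}^{-|\bsnu|+|\boldsymbol m|-2}\le\sigma_{\min}^{-|\bsnu|-1}$, so that $\sigma_{\min}^{-|\bsnu|-1}C^{|\bsnu|}\boldsymbol b^{\bsnu}$ factors out of the sum.

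It then remains to show that the residual combinatorial sum is bounded by $(|\bsnu|!)^\beta\max\{1,2^{|\bsnu|-1}\}$. Writing $v=|\bsnu|$ and grouping the terms according to $\ell=|\boldsymbol m|$, the Vandermonde convolution $\sum_{\boldsymbol m\le\bsnu,\,|\boldsymbol m|=\ell}\binom{\bsnu}{\boldsymbol m}=\binom{v}{\ell}$ collapses the sum to $\sum_{\ell=1}^{v}\binom{v}{\ell}(\ell!)^\beta((v-\ell)!)^\beta\max\{1,2^{v-\ell-1}\}$. Using $\beta\ge1$ and $\binom{v}{\ell}\ge1$ we have $\binom{v}{\ell}\le\binom{v}{\ell}^\beta$, whence $\binom{v}{\ell}(\ell!)^\beta((v-\ell)!)^\beta\le(v!)^\beta$. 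After factoring out $(v!)^\beta$, what is left is the purely geometric sum $\sum_{\ell=1}^{v}\max\{1,2^{v-\ell-1}\}$, which upon reindexing $k=v-\ell$ becomes $\sum_{k=0}^{v-1}\max\{1,2^{k-1}\}=2^{v-1}=\max\{1,2^{v-1}\}$—the same elementary identity already used at the end of the proof of Lemma~\ref{lemma:taubound}. Combining these estimates closes the induction.

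I expect the only genuine bookkeeping difficulty to be the exponent collapse in the middle step: one must use $|\boldsymbol m|\ge1$ together with the correct directions of monotonicity of $t\mapsto C^{t}$ and $t\mapsto\sigma_{\min}^{t}$ to confirm that the leftover exponents are dominated by the claimed ones. The combinatorial reduction and the geometric identity are routine and mirror arguments appearing earlier in the paper.
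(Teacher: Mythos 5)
Your proof is correct and follows essentially the same route as the paper's: the paper first reduces the recurrence to an auxiliary sequence $\alpha_{\bsnu}$ satisfying $\alpha_{\bsnu}=\sum_{\mathbf 0\neq\boldsymbol m\leq\bsnu}\binom{\bsnu}{\boldsymbol m}(|\boldsymbol m|!)^{\beta}\alpha_{\bsnu-\boldsymbol m}$ and then bounds $\alpha_{\bsnu}\leq\max\{1,2^{|\bsnu|-1}\}(|\bsnu|!)^{\beta}$ in a second induction, whereas you merge the two inductions into one, but the exponent bookkeeping, the Vandermonde collapse, the bound $\binom{v}{\ell}(\ell!)^{\beta}((v-\ell)!)^{\beta}\leq(v!)^{\beta}$, and the geometric identity $\sum_{k=0}^{v-1}\max\{1,2^{k-1}\}=2^{v-1}$ are identical in substance. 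All steps check out, including the direction of the monotonicity arguments for $C\geq 1$ and $\sigma_{\min}\leq 1$.
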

\begin{proof}
We begin by proving the inequality
\begin{align}\label{eq:thisinequality}
    \xi_\bsnu \leq \sigma_{\min}^{-|\bsnu|-1} C^{|\bsnu|}\alpha_\bsnu \boldsymbol{b}^\bsnu \enspace \text{for all}\enspace \bsnu\in \mathscr{F},
\end{align}
where
\begin{align}
\alpha_{\mathbf 0}=1\quad\text{and}\quad \alpha_\bsnu=\sum_{ \boldsymbol{0}\neq{\boldsymbol{m}}\leq \bsnu}\binom{\bsnu}{\boldsymbol{m}}(|\boldsymbol{m}|!)^\beta \alpha_{\bsnu-\boldsymbol{m}},\quad \bsnu\in\mathscr F\setminus\{\mathbf 0\},\label{eq:alphaseqdef}
\end{align}
by induction over the multi-index $\bsnu$.

The base case $\bsnu=\boldsymbol{0}$ is given by
\begin{align*}
    \xi_{\boldsymbol{0}}=\|\partial_\bsy^{\boldsymbol{0}}J(\cdot,\bsy)^{-1}\|_{L^\infty}\leq \sigma_{\min}^{-1}.
\end{align*}
For the induction hypothesis we fix a multi-index $\bsnu\in\mathscr{F}\setminus\{\boldsymbol{0}\}$ and assume that the claim holds true for all multi-indices with order less than $|\bsnu|$.

One finds
\begin{align*}
    \xi_\bsnu &\leq C\sigma_{\min}^{-1}\sum_{\boldsymbol{0}\neq \boldsymbol{m}\leq \bsnu}\binom{\bsnu}{\boldsymbol{m}}(|\boldsymbol{m}|!)^\beta\boldsymbol{b}^{\boldsymbol{m}} \xi_{\bsnu-\boldsymbol{m}}\\
    &\leq C\sigma_{\min}^{-1}\sum_{\boldsymbol{0}\neq \boldsymbol{m}\leq \bsnu}\binom{\bsnu}{\boldsymbol{m}}(|\boldsymbol{m}|!)^\beta\boldsymbol{b}^{\boldsymbol{m}} \sigma_{\min}^{-|\bsnu|+|\boldsymbol{m}|-1} \max \{1,C\}^{|\bsnu|-|\boldsymbol{m}|}\alpha_{\bsnu-\boldsymbol{m}}\boldsymbol{b}^{\bsnu-\boldsymbol{m}}\\
    &\leq C^{|\bsnu|}\sigma_{\min}^{-|\bsnu|-1}\boldsymbol{b}^{\bsnu}\sum_{\boldsymbol{0}\neq \boldsymbol{m}\leq \bsnu}\binom{\bsnu}{\boldsymbol{m}}(|\boldsymbol{m}|!)^\beta \alpha_{\bsnu-\boldsymbol{m}}\\
    &=  C^{|\bsnu|}\sigma_{\min}^{-|\bsnu|-1}\boldsymbol{b}^{\bsnu}\alpha_\bsnu.
\end{align*}
This proves inequality~\eqref{eq:thisinequality}. 

 Next we show the following bound for the sequence~\eqref{eq:alphaseqdef}: 
\begin{align*}
    \alpha_\bsnu\leq \max\{1,2^{|\bsnu|-1}\}(|\bsnu|!)^\beta.
\end{align*}
We do this by induction. The induction base follows directly from the definition $\alpha_{\mathbf 0} = 1$ and this in fact coincides with $\max\{1,2^{0-1}\}(0!)^\beta =\max\{1,2^{-1}\}=1.$

Now let $\bsnu\in\mathscr F\setminus\{\mathbf 0\}$ and suppose that the claim has already been proven for all multi-indices with order less than $|\bsnu|$. Then, applying the definition of the sequence $\alpha$ and the induction hypothesis, we obtain 
\begin{align*}
\alpha_{\bsnu}&\leq \sum_{\mathbf 0\neq \boldsymbol m\leq \bsnu}\binom{\bsnu}{\boldsymbol m}(|\boldsymbol m|!)^\beta \max\{1,2^{|\bsnu|-|\boldsymbol m|-1}\}((|\bsnu|-|\boldsymbol m|)!)^\beta\\
&=(|\bsnu|!)^\beta+2^{|\bsnu|-1}\sum_{\mathbf 0\neq \boldsymbol m< \bsnu}\binom{\bsnu}{\boldsymbol m}(|\boldsymbol m|!)^\beta 2^{-|\boldsymbol m|}((|\bsnu|-|\boldsymbol m|)!)^\beta\\
&=(|\bsnu|!)^\beta+2^{|\bsnu|-1}\sum_{\ell=1}^{|\bsnu|-1}(\ell!)^\beta 2^{-\ell}((|\bsnu|-\ell)!)^\beta\frac{|\bsnu|!}{\ell!(|\bsnu|-\ell)!}\\
&\leq (|\bsnu|!)^\beta +(|\bsnu|!)^\beta2^{|\bsnu|-1}\underset{=1-2^{1-|\bsnu|}}{\underbrace{\sum_{\ell=1}^{|\bsnu|-1}2^{-\ell}}}\\
&=(|\bsnu|!)^\beta2^{|\bsnu|-1}.
\end{align*}
Note that this is precisely $\alpha_{\bsnu}$ when $\bsnu\neq\mathbf 0$, which completes the induction proof.\end{proof}

The following result is used to resolve the inductive bounds for both the stationary and parabolic settings.

\begin{lemma}\label{superlemma}
Let $(\Lambda_{\bsnu})_{\bsnu\in\mathscr F}$ satisfy, for all $\bsnu\in\mathscr F\setminus\{\mathbf 0\}$,
\begin{align*}
&\Lambda_{\mathbf 0}\leq C_0,\\
&\Lambda_{\bsnu}\leq C\sum_{\substack{\boldsymbol m\leq \bsnu\\ \boldsymbol m\neq \mathbf 0}}\binom{\bsnu}{\boldsymbol m}((|\boldsymbol m|+k)!)^{\beta}(C\boldsymbol b)^{\boldsymbol m}\Lambda_{\bsnu-\boldsymbol m}+C((|\bsnu|+k)!)^{\beta}(C\boldsymbol b)^{\bsnu},
\end{align*}
where $C_0,C>0$ are constants and $k\in\mathbb N$. Then there holds
$$
\Lambda_{\bsnu}\leq (1+C_0)(C\boldsymbol b)^{\bsnu}(|\bsnu|!)^{\beta}\widetilde\tau_{|\bsnu|,\beta,k},
$$
where 
\begin{align}
\widetilde \tau_{0,\beta,k}=1\quad\text{and}\quad \widetilde \tau_{\nu,\beta,k}=C\sum_{\ell=0}^{v-1}\frac{((\nu-\ell+k)!)^{\beta}}{((\nu-\ell)!)^{\beta}}\widetilde \tau_{\ell,\beta,k}.\label{eq:tautildeseq}
\end{align}
\end{lemma}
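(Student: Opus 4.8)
The plan is to prove the bound by induction on the order $|\bsnu|$ of the multi-index, mirroring the argument used for Lemma~\ref{lemma:taubnd}. The base case $\bsnu=\mathbf 0$ is immediate: since $\widetilde\tau_{0,\beta,k}=1$, $(C\boldsymbol b)^{\mathbf 0}=1$, and $(0!)^\beta=1$, the hypothesis $\Lambda_{\mathbf 0}\le C_0\le 1+C_0$ already yields the claim.

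For the inductive step, I would fix $\bsnu\in\mathscr F\setminus\{\mathbf 0\}$, assume the bound for all multi-indices of strictly smaller order, and first isolate the boundary term $\boldsymbol m=\bsnu$ from the recursive sum. The crucial point is that this term must be estimated using the \emph{tight} base bound $\Lambda_{\mathbf 0}\le C_0$ rather than the inflated inductive form: doing so, the $\boldsymbol m=\bsnu$ contribution $C\,C_0\,((|\bsnu|+k)!)^\beta(C\boldsymbol b)^{\bsnu}$ combines with the stand-alone source term $C((|\bsnu|+k)!)^\beta(C\boldsymbol b)^{\bsnu}$ into the single expression $C(1+C_0)((|\bsnu|+k)!)^\beta(C\boldsymbol b)^{\bsnu}$. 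To the remaining terms $\mathbf 0\ne\boldsymbol m<\bsnu$ I would apply the induction hypothesis $\Lambda_{\bsnu-\boldsymbol m}\le(1+C_0)(C\boldsymbol b)^{\bsnu-\boldsymbol m}((|\bsnu|-|\boldsymbol m|)!)^\beta\widetilde\tau_{|\bsnu|-|\boldsymbol m|,\beta,k}$ and factor out $(1+C_0)(C\boldsymbol b)^{\bsnu}$.

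Next I would collapse the multi-index sum into a sum over $\ell=|\boldsymbol m|$ by the Vandermonde convolution $\sum_{\boldsymbol m\le\bsnu,\,|\boldsymbol m|=\ell}\binom{\bsnu}{\boldsymbol m}=\binom{|\bsnu|}{\ell}$, and then use $\binom{|\bsnu|}{\ell}\le\binom{|\bsnu|}{\ell}^\beta$ (valid since $\beta\ge 1$ and the binomial coefficient is at least $1$) together with the identity $((|\bsnu|-\ell)!)^\beta\binom{|\bsnu|}{\ell}^\beta=(|\bsnu|!/\ell!)^\beta$ to extract the factor $(|\bsnu|!)^\beta$. This reduces the off-diagonal contribution to $C(1+C_0)(C\boldsymbol b)^{\bsnu}(|\bsnu|!)^\beta\sum_{\ell=1}^{|\bsnu|-1}\frac{((\ell+k)!)^\beta}{(\ell!)^\beta}\widetilde\tau_{|\bsnu|-\ell,\beta,k}$, while the merged boundary/source term is exactly the missing $\ell=|\bsnu|$ summand $C(1+C_0)(C\boldsymbol b)^{\bsnu}(|\bsnu|!)^\beta\frac{((|\bsnu|+k)!)^\beta}{(|\bsnu|!)^\beta}\widetilde\tau_{0,\beta,k}$. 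Recognizing the completed sum $\sum_{\ell=1}^{|\bsnu|}\frac{((\ell+k)!)^\beta}{(\ell!)^\beta}\widetilde\tau_{|\bsnu|-\ell,\beta,k}$ as the reindexed form of $\widetilde\tau_{|\bsnu|,\beta,k}/C$ from~\eqref{eq:tautildeseq} (substitute $j=|\bsnu|-\ell$) closes the induction.

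The main obstacle is the accounting of the additive source term $C((|\bsnu|+k)!)^\beta(C\boldsymbol b)^{\bsnu}$, which is absent from the cleaner recursion of Lemma~\ref{lemma:taubnd}. It must be absorbed \emph{exactly} into the $\ell=|\bsnu|$ term of the $\widetilde\tau$ recursion; this works only because the multiplicative factor $C$ is built into the definition~\eqref{eq:tautildeseq} and because the $\boldsymbol m=\bsnu$ term is handled with the sharp estimate $\Lambda_{\mathbf 0}\le C_0$. Had one instead bounded the $\boldsymbol m=\bsnu$ term by the inductive form with $(1+C_0)$, the leftover source term would produce an uncontrolled surplus, so this bookkeeping is the delicate part of the argument.
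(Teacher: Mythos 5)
Your proposal is correct and follows essentially the same route as the paper's proof: split off the $\boldsymbol m=\bsnu$ term with the sharp bound $\Lambda_{\mathbf 0}\le C_0$, merge it with the source term into a single $(1+C_0)$-weighted contribution, collapse the multi-index sum via the Vandermonde convolution and $\binom{|\bsnu|}{\ell}\le\binom{|\bsnu|}{\ell}^{\beta}$, and identify the completed sum (after reindexing) with the recursion~\eqref{eq:tautildeseq}. The bookkeeping point you flag as delicate — absorbing the source term as the $\ell=|\bsnu|$ summand of the $\widetilde\tau$ recursion — is exactly how the paper closes the induction.
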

\begin{proof}By the induction hypothesis, we obtain
\begin{align*}
\Lambda_{\bsnu}&\leq \!C\!\sum_{\substack{\boldsymbol m\leq \bsnu\\ \boldsymbol m\neq \mathbf 0\\ \boldsymbol m\neq \bsnu}}\!\binom{\bsnu}{\boldsymbol m}((|\boldsymbol m|\!+\!k)!)^{\beta}(C\boldsymbol b)^{\boldsymbol m}(1\!+\!C_0)(C\boldsymbol b)^{\boldsymbol \nu\!-\!\boldsymbol m}((|\bsnu|\!-\!|\boldsymbol m|)!)^{\beta}\widetilde \tau_{|\boldsymbol\nu|\!-\!|\boldsymbol m|,\beta,k}\\
&\quad +(C_0+1)C((|\bsnu|+k)!)^{\beta}(C\boldsymbol b)^{\bsnu}\\
&=(C_0+1)C(C\boldsymbol b)^{\bsnu}\sum_{\ell=1}^{|\bsnu|-1}((\ell+k)!)^{\beta}((|\bsnu|-\ell)!)^{\beta}\widetilde\tau_{|\bsnu|-\ell,\beta,k}\sum_{\substack{|\boldsymbol m|=\ell\\ \boldsymbol m\leq\bsnu}}\binom{\bsnu}{\boldsymbol m}\\
&\quad +(C_0+1)C((|\bsnu|+k)!)^{\beta}(C\boldsymbol b)^{\bsnu}\\
&=(C_0+1)C(C\boldsymbol b)^{\bsnu}\sum_{\ell=1}^{|\bsnu|}((\ell+k)!)^{\beta}((|\bsnu|-\ell)!)^{\beta}\widetilde\tau_{|\bsnu|-\ell,\beta,k}\sum_{\substack{|\boldsymbol m|=\ell\\ \boldsymbol m\leq\bsnu}}\binom{\bsnu}{\boldsymbol m}\\
&\leq (C_0+1)C(C\boldsymbol b)^{\bsnu}(|\bsnu|!)^{\beta}\sum_{\ell=1}^{|\bsnu|}\frac{((\ell+k)!)^{\beta}}{(\ell!)^{\beta}}\widetilde\tau_{|\bsnu|-\ell,\beta,k}\\
&=(C_0+1)C(C\boldsymbol b)^{\bsnu}(|\bsnu|!)^{\beta}\sum_{\ell=0}^{|\bsnu|-1}\frac{((|\bsnu|-\ell+k)!)^{\beta}}{(|\bsnu|-\ell!)^{\beta}}\widetilde\tau_{\ell,\beta,k}\\
&=(C_0+1)(C\boldsymbol b)^{\bsnu}(|\bsnu|!)^{\beta}\widetilde\tau_{|\bsnu|,\beta,k},
\end{align*}
as desired.\end{proof}
\begin{lemma}\label{L2reg}
Under Assumptions \ref{a1}\,--\,\ref{a2sv}, \ref{a4} as well as \ref{A10} the solution $\widehat u$ of the variational problem
 \begin{align*}
b(\bsy;\widehat u,\widehat v)=F(\bsy;\widehat v)\quad\text{for all}~\widehat v=(\widehat v_1,\widehat v_2)\in\mathcal Y~\text{and}~\bsy\in U,
\end{align*}
where 
\begin{align*}
b(\bsy;\widehat u,\widehat v):=&\int_I \langle\tfrac{\partial}{\partial t}\widehat u(\cdot,t,\bsy),\widehat v_1(\cdot,t)\det J(\cdot,\bsy)\rangle_{H^{-1},H_0^1}\,{\rm d}t\\
&\quad +\int_I\int_{{D_{\rm ref}}} A(\bsx,\bsy)\nabla \widehat u(\bsx,t,\bsy)\cdot \nabla \widehat v_1(\bsx,t)\,{\rm d}\bsx\,{\rm d}t\\
&\quad+\int_{D_{\rm ref}} \widehat u(\bsx,0,\bsy)\widehat v_2(\bsx)\det J(\bsx,\bsy)\,{\rm d}\bsx
\end{align*}
and
\begin{align*}
F(\bsy;\widehat v):=&\int_I \int_{D_{\rm ref}}  f_{\rm ref}(\bsx,\bsy)\widehat v_1(\bsx,t)\,{\rm d}\bsx\,{\rm d}t\\
&\quad+\int_{D_{\rm ref}} \widehat u_0(\bsx,\bsy)\widehat v_2(\bsx)\det J(\bsx,\bsy)\,{\rm d}\bsx
\end{align*}
has the property
\begin{align*}
  \partial_\bsy^\bsnu \widehat{u}(\cdot,\cdot,\bsy)\in L^2(I;H_0^1(D_\mathrm{ref}))
\end{align*}
for all $\bsnu\in\mathscr F$.
\end{lemma}
\begin{proof}
    We will perform a proof by induction over the differentiation order $\bsnu$ and show that the parametric derivatives lie in the space $\mathcal X$.

    For $\bsnu=\boldsymbol{0}$ we have that $\widehat{u}(\cdot,\cdot,\bsy)\in\mathcal X$, which holds true. We fix $\bsnu\in\mathscr F\setminus\{\boldsymbol{0}\}$ and assume that the claim holds true for all multi-indices with order less than $|\bsnu|$, which means that all the lower order parametric derivatives of $\widehat{u}$ lie in $\mathcal X$.
    
    In order to find the correct operator equation later used in the proof, we formally differentiate the equation 
\begin{align*}
b(\bsy;\widehat{u},\widehat{v})=F(\bsy;\widehat{v})
\end{align*}
with respect to $\bsy\in U$, where $\widehat{v}=(\widehat{v}_1,\widehat{v}_2)\in\mathcal Y$, i.e.,
\begin{align*}
    \partial_\bsy^\bsnu b(\bsy;\widehat{u},\widehat{v})=\partial_\bsy^\bsnu F(\bsy;\widehat{v}).
\end{align*}
By Leibniz product rule we get
\begin{align*}
&\sum_{\boldsymbol m\leq\bsnu}\binom{\bsnu}{\boldsymbol m}\bigg[\int_I \langle\tfrac{\partial}{\partial t}(\partial_\bsy^{\bsnu-\boldsymbol m}\widehat u(\cdot,t,\bsy)),\widehat v_1(\cdot,t)\partial_\bsy^{\boldsymbol m}\det J(\cdot,\bsy)\rangle_{H^{-1},H_0^1}\,{\rm d}t\\
&\quad +\int_I\int_{D_{\rm ref}}\partial_\bsy^{\boldsymbol m}A(\bsx,\bsy)\nabla \partial_{\bsy}^{\bsnu-\boldsymbol m}\widehat u(\cdot,t,\bsy)\cdot \nabla \widehat v_1(\bsx,t)\,{\rm d}\bsx\,{\rm d}t\\
&\quad +\int_{D_{\rm ref}}\partial_\bsy^{\bsnu-\boldsymbol m}\widehat u(\bsx,0,\bsy)\widehat v_2(\bsx)\partial_{\bsy}^{\boldsymbol m}\det J(\bsx,\bsy)\,{\rm d}\bsx\bigg]\\
&=\int_I \int_{D_{\rm ref}}\partial_{\bsy}^{\bsnu}f_{\rm ref}(\cdot,\bsy)\widehat v_1\,{\rm d}\bsx\,{\rm d}t\\
&\quad + \sum_{\boldsymbol m\leq \bsnu}\binom{\bsnu}{\boldsymbol m}\int_{D_{\rm ref}}\partial_{\bsy}^{\bsnu-\boldsymbol m}\widehat u_0(\bsx,\bsy)\widehat v_2(\bsx)\partial_{\bsy}^{\boldsymbol m}\det J(\bsx,\bsy)\,{\rm d}\bsx,
\end{align*}
where $\widehat u_0(\bsx,\bsy)=u_0(\boldsymbol V(\bsx,\bsy),\bsy)$ for $\bsx\in D_{\rm ref}$, $\bsy\in U$.

Isolating the case $\bsm=\boldsymbol{0}$ we can rewrite this as
\begin{align*}
&\int_I \langle\tfrac{\partial}{\partial t}(\partial_\bsy^{\boldsymbol \nu}\widehat u(\cdot,t,\bsy)),\widehat v_1(\cdot,t)\det J(\cdot,\bsy)\rangle_{H^{-1},H_0^1}\,{\rm d}t\\
&\quad +\int_I\int_{D_{\rm ref}}A(\bsx,\bsy)\nabla\partial_\bsy^{\boldsymbol \nu} \widehat u(\bsx,t,\bsy)\cdot \nabla \widehat v_1(\bsx,t)\,{\rm d}\bsx\,{\rm d}t\\
&\quad +\int_{D_{\rm ref}}\partial_\bsy^{\boldsymbol \nu}\widehat u(\bsx,0,\bsy)\widehat v_2(\bsx)\det J(\bsx,\bsy)\,{\rm d}\bsx\\
&\quad +\sum_{\boldsymbol{0}\neq\boldsymbol m\leq\bsnu}\binom{\bsnu}{\boldsymbol m}\bigg[\int_I \langle\tfrac{\partial}{\partial t}(\partial_\bsy^{\bsnu-\boldsymbol m}\widehat u(\cdot,t,\bsy)),\widehat v_1(\cdot,t)\partial_\bsy^{\boldsymbol m}\det J(\cdot,\bsy)\rangle_{H^{-1},H_0^1}\,{\rm d}t\\
&\quad +\int_I\int_{D_{\rm ref}}\partial_\bsy^{\boldsymbol m}A(\bsx,\bsy)\nabla \partial_{\bsy}^{\bsnu-\boldsymbol m}\widehat u(\bsx,t,\bsy)\cdot \nabla \widehat v_1(\bsx,t)\,{\rm d}\bsx\,{\rm d}t\\
&\quad +\int_{D_{\rm ref}}\partial_\bsy^{\bsnu-\boldsymbol m}\widehat u(\bsx,0,\bsy)\widehat v_2(\bsx)\partial_{\bsy}^{\boldsymbol m}\det J(\bsx,\bsy)\,{\rm d}\bsx\bigg]\\
&=\int_I \int_{D_{\rm ref}}\partial_{\bsy}^{\bsnu}f_{\rm ref}(\bsx,\bsy)\widehat v_1(\bsx)\,{\rm d}\bsx\,{\rm d}t\\
&\quad + \sum_{\boldsymbol m\leq \bsnu}\binom{\bsnu}{\boldsymbol m}\int_{D_{\rm ref}}\partial_{\bsy}^{\bsnu-\boldsymbol m}\widehat u_0(\bsx,\bsy)\widehat v_2(\bsx)\partial_{\bsy}^{\boldsymbol m}\det J(\bsx,\bsy)\,{\rm d}\bsx,
\end{align*}
Denoting
\begin{align*}
    T_\bsm&:=\int_I \langle\tfrac{\partial}{\partial t}(\partial_\bsy^{\bsnu-\boldsymbol m}\widehat u(\cdot,t,\bsy))\widehat v_1(\cdot,t)\partial_\bsy^{\boldsymbol m}\det J(\cdot,\bsy)\rangle_{H^{-1},H_0^1}\,{\rm d}t\\
&\quad +\int_I\int_{D_{\rm ref}}\partial_\bsy^{\boldsymbol m}A(\bsx,\bsy)\nabla \partial_{\bsy}^{\bsnu-\boldsymbol m}\widehat u(\bsx,t,\bsy)\cdot \nabla \widehat v_1(\bsx,t)\,{\rm d}\bsx\,{\rm d}t\\
&\quad +\int_{D_{\rm ref}}\partial_\bsy^{\bsnu-\boldsymbol m}\widehat u(\bsx,0,\bsy)\widehat v_2(\bsx)\partial_{\bsy}^{\boldsymbol m}\det J(\bsx,\bsy)\,{\rm d}\bsx
\end{align*}
we can write
\begin{align}
b(\bsy;\partial_\bsy^\bsnu\widehat{u},\widehat{v})=\partial_\bsy^\bsnu F(\bsy;\widehat{v})-\sum_{\boldsymbol{0}\neq\boldsymbol m\leq\bsnu}\binom{\bsnu}{\boldsymbol m}T_\bsm(\bsy;\partial_\bsy^{\bsnu-\bsm}\widehat{u},\widehat{v}).\label{IDbilinearform}\enspace 
\end{align}
Note that these terms are well-defined due to the induction hypothesis.

Now define a functional $R_\bsnu(\bsy)\in\mathcal Y^*$ by
\begin{align*}
    \langle R_\bsnu(\bsy),\widehat{v}\rangle:=\partial_\bsy^\bsnu F(\bsy;\widehat{v})-\sum_{\boldsymbol{0}\neq\boldsymbol m\leq\bsnu}\binom{\bsnu}{\boldsymbol m}T_\bsm(\bsy;\partial_\bsy^{\bsnu-\bsm}\widehat u,\widehat{v})
\end{align*}
as well as $\mathcal B(\bsy):\mathcal{X}\rightarrow \mathcal Y^*$ by
\begin{align*}
    \langle \mathcal B(\bsy)w,\widehat{v}\rangle=b(\bsy;w,\widehat{v}),\enspace w\in\mathcal X.
\end{align*}
Then, the above identity~\eqref{IDbilinearform} can be expressed as 
\begin{align*}
    \mathcal B(\bsy)w=R_\bsnu(\bsy).
\end{align*}
For this operator equation to make sense we note that for $w\in\mathcal X$ and $\widehat v\in\mathcal Y$ there holds
\begin{align}
    |T_\bsm(\bsy;w,\widehat v)|&\leq \int_I |\langle\tfrac{\partial}{\partial t}w\partial_\bsy^\bsm\det J(\cdot,\bsy),\widehat v_1(\cdot,t)\rangle_{H^{-1},H_0^1}|\,{\rm d}t\notag\\
&\quad +\int_I\int_{D_{\rm ref}}|\partial_\bsy^{\boldsymbol m}A(\bsx,\bsy)\nabla w\cdot \nabla \widehat v_1(\bsx,t)|\,{\rm d}\bsx\,{\rm d}t \notag\\
&\quad +\int_{D_{\rm ref}}|w(\cdot,0)\widehat v_2(\bsx)\partial_{\bsy}^{\boldsymbol m}\det J(\bsx,\bsy)|\,{\rm d}\bsx \notag\\
&\leq C\|\partial_\bsy^\bsm \det J(\cdot,\bsy)\|_{W^{1,\infty}}\|\tfrac{\partial}{\partial t}w\|_{L^2(I;H^{-1})}\|\widehat{v}_1\|_{L^2(I;H_0^1)} \notag \\
&\quad +\|\partial_\bsy^{\boldsymbol m}A(\cdot,\bsy)\|_{L^\infty}\int_I\int_{D_{\rm ref}}|\nabla w\cdot \nabla \widehat v_1(\bsx,t)|\,{\rm d}\bsx\,{\rm d}t\notag\\
&\quad +\|\partial_\bsy^\bsm\det J(\cdot,\bsy)\|_{L^\infty}\int_{D_{\rm ref}}|w(\cdot,0)\widehat v_2(\bsx)|\,{\rm d}\bsx.\label{eq:Operatorbound}
\end{align}
This observation and applying Cauchy--Schwarz inequality to the other two summands leads to a bound of the expression~\eqref{eq:Operatorbound} given by
\begin{align*}
&C \|\partial_\bsy^\bsm \det J(\cdot,\bsy)\|_{W^{1,\infty}} \|\tfrac{\partial}{\partial t}w\|_{L^2(I;H^{-1})}\|\widehat{v}_1\|_{L^2(I;H_0^1)}\\
&\quad +\|\partial_\bsy^{\boldsymbol m}A(\cdot,\bsy)\|_{L^\infty}\|\nabla w\|_{L^2(D\times I)}\|\nabla \widehat{v}_1\|_{L^2(D\times I)}\\
&\quad + \|\partial_\bsy^\bsm \det J(\cdot,\bsy)\|_{L^\infty} \|w(0)\|_{L^2}\|\widehat{v}_2\|_{L^2}\\
&=C\|\partial_\bsy^\bsm \det J(\cdot,\bsy)\|_{W^{1,\infty}}\\
&\quad\times (\|\tfrac{\partial}{\partial t}w\|_{L^2(I;H^{-1})}\|\widehat{v}_1\|_{L^2(I;H_0^1)}+\|w(0)\|_{L^2}\|\widehat{v}_2\|_{L^2})\\
&\quad+\|\partial_\bsy^\bsm A(\cdot,\bsy)\|_{L^\infty}\|\nabla w\|_{L^2(D\times I)}\|\nabla v_1\|_{L^2(D\times I)}\\
&\leq (1+C_{\mathrm{tr}})C\|\partial_\bsy^\bsm \det J(\cdot,\bsy)\|_{W^{1,\infty}}\|w\|_{\mathcal X}\|\widehat v\|_{\mathcal Y}\\
&\quad +\|\partial_\bsy^\bsm A(\cdot,\bsy)\|_{L^\infty}\|w\|_{\mathcal X}\|\widehat v\|_{\mathcal Y}\\
&\leq C(\bsy,\bsm)\|w\|_\mathcal X\|\widehat{v}\|_\mathcal Y,
\end{align*}
where $C(\bsy,\bsm):=(1+C_{\mathrm{tr}})C\|\partial_\bsy^\bsm \det J(\cdot,\bsy)\|_{W^{1,\infty}}+\|\partial_\bsy^\bsm A(\cdot,\bsy)\|_{L^\infty}$.
Note that in the penultimate inequality we use the fact that 
\begin{align*}
   \mathcal X\hookrightarrow \mathcal C([0,T], L^2(D_{\rm ref}))
\end{align*}
 and hence $\|w(0)\|_{L^2}\leq C_{\mathrm{tr}}\|w\|_{\mathcal X}$ (cf.~{\citet[Theorem~5.9.3]{evans}}).
It follows that $T_\bsm\in\mathcal Y^*$. Exploiting the bounds found in the regularity analysis, one easily sees that a similar reasoning reveals that $\partial_\bsy^\bsnu F(\bsy;\cdot)\in\mathcal Y^*$. Thus, $R_\bsnu(\bsy)\in\mathcal Y^*$.

By Assumption~(A2), the Jacobian determinant $\det J(\cdot,\bsy)$ is bounded above and below away from zero and the coefficient matrix
\begin{align*}
A(\bsx,\bsy) = (J(\bsx,\bsy)^{\rm T}J(\bsx,\bsy))^{-1}\det J(\bsx,\bsy)
\end{align*}
is uniformly bounded as well as uniformly elliptic. 
In consequence, the pullback space-time bilinear form
\begin{align*}
b(\bsy;w,\widehat v)
&= \int_I \langle \partial_t w(t), \widehat v_1(t)\rangle_{H^{-1},H_0^1}\,\mathrm{d}t\\
&\quad + \int_I \int_{D_{\mathrm{ref}}} A(\bsx,\bsy)\nabla w\cdot\nabla \widehat v_1 \,\mathrm{d}\bsx\,\mathrm{d}t\\
&\quad+ \int_{D_{\mathrm{ref}}} w(\bsx,0)\widehat v_2(\bsx)\det J(\bsx,\bsy)\,\mathrm{d}\bsx
\end{align*}
satisfies the boundedness and coercivity assumptions of {\citet[Theorem~5.1]{schwabstevenson}}.
Therefore, the induced operator
\[
\mathcal B(\bsy):\mathcal X \to \mathcal Y^*, 
\qquad
\langle \mathcal B(\bsy)w,\widehat v\rangle := b(\bsy;w,\widehat v),
\]
is boundedly invertible.

By the bounded invertibility of $\mathcal B(\bsy)$ it follows that there exists a unique solution $w_\bsnu\in\mathcal X$ of the equation $\mathcal B(\bsy)w_\bsnu=R_\bsnu(\bsy).$ By uniqueness and the fact that we have seen that the operator equation is fulfilled by $\partial_\bsy^\bsnu\widehat{u}(\cdot,\cdot,\bsy)$ it follows that $w_\bsnu=\partial_\bsy^\bsnu\widehat{u}(\cdot,\cdot,\bsy)$. Hence, $\partial_\bsy^\bsnu\widehat{u}(\cdot,\cdot,\bsy)\in\mathcal X$. Since $\mathcal X\subset L^2(I;H_0^1(D_\mathrm{ref}))$ the statement is verified.
\end{proof}


\bibliographystyle{abbrvnat}
\bibliography{main}

\end{document}